\documentclass[a4paper,11pt]{amsart}
\usepackage{amsmath, amsfonts, amssymb, amsthm, amscd}
\usepackage{graphicx}
\usepackage{enumerate}
\usepackage[colorlinks=true, linkcolor=blue, citecolor=blue]{hyperref}
\usepackage{url}
\usepackage{color}
\usepackage[utf8]{inputenc}
\usepackage{tikz}
\usetikzlibrary{arrows,patterns} 
\usepackage[a4paper,scale={0.72,0.74},marginratio={1:1},footskip=7mm,headsep=10mm]{geometry}

\usepackage[normalem]{ulem}


\pdfoptionpdfminorversion=7
\setcounter{secnumdepth}{2}
\frenchspacing
\numberwithin{equation}{section}


\newtheorem{theorem}{Theorem}[section]
\newtheorem{lemma}[theorem]{Lemma}
\newtheorem{proposition}[theorem]{Proposition}


\newcommand{\bbE}{{\ensuremath{\mathbb E}} }

\newcommand{\bbN}{{\ensuremath{\mathbb N}} }

\newcommand{\bbP}{{\ensuremath{\mathbb P}} }

\newcommand{\bbR}{{\ensuremath{\mathbb R}} }

\newcommand{\bbZ}{{\ensuremath{\mathbb Z}} }


\newcommand{\cB}{{\ensuremath{\mathcal B}} }

\newcommand{\cN}{{\ensuremath{\mathcal N}} }

\newcommand{\cP}{{\ensuremath{\mathcal P}} }

\newcommand{\cR}{{\ensuremath{\mathcal R}} }
\newcommand{\cS}{{\ensuremath{\mathcal S}} }

\newcommand{\cZ}{{\ensuremath{\mathcal Z}} }


\newcommand{\ga}{\alpha}
\newcommand{\gb}{\beta}
\newcommand{\gga}{\gamma}

\newcommand{\gd}{\delta}
\newcommand{\gD}{\Delta}
\newcommand{\gep}{\varepsilon} 
\newcommand{\gz}{\zeta}
\newcommand{\gt}{\theta}
\newcommand{\gk}{\kappa}
\newcommand{\gl}{\lambda}
\newcommand{\gL}{\Lambda}

\newcommand{\gs}{\sigma}

\newcommand{\gO}{\Omega}
\newcommand{\8}{\infty}


\renewcommand{\tilde}{\widetilde}          
\DeclareMathSymbol{\leqslant}{\mathalpha}{AMSa}{"36} 
\DeclareMathSymbol{\geqslant}{\mathalpha}{AMSa}{"3E} 
\DeclareMathSymbol{\eset}{\mathalpha}{AMSb}{"3F}     
\newcommand{\dd}{\text{\rm d}}             
\newcommand{\sumtwo}[2]{\sum_{\substack{#1 \\ #2}}} 


\newcommand{\R}{\mathbb{R}}

\newcommand{\Z}{\mathbb{Z}}
\newcommand{\N}{\mathbb{N}}

\newcommand{\PEfont}{\mathrm}
\DeclareMathOperator{\var}{\ensuremath{\PEfont Var}}

\newcommand\bP{\ensuremath{\mathrm{P}}}

\newcommand\bE{\ensuremath{\mathrm{E}}}

\DeclareMathOperator\argmin{argmin}
\renewcommand{\epsilon}{\varepsilon}

\newcommand{\ind}{{\sf 1}}

\newcommand{\card}{\mathrm{card}}


\newenvironment{myenumerate}{
\renewcommand{\theenumi}{\arabic{enumi}}
\renewcommand{\labelenumi}{{\rm(\theenumi)}}
\begin{list}{\labelenumi}
{
\setlength{\itemsep}{0.4em}
\setlength{\topsep}{0.5em}
\setlength\leftmargin{2.45em}
\setlength\labelwidth{2.05em}
\setlength{\labelsep}{0.4em}
\usecounter{enumi}
}
}
{\end{list}
}

\renewenvironment{enumerate}{
\begin{myenumerate}}
{\end{myenumerate}}




\newcommand{\beq}{\begin{equation}}
\newcommand{\eeq}{\end{equation}}
\newcommand{\ba}{\begin{aligned}}
\newcommand{\ea}{\end{aligned}}


\newcommand{\bin}{\mathrm{Bin}}
\newcommand{\ber}{\mathrm{Ber}}


\begin{document}

\title[The simple random walk in power-law renewal traps]{A limit theorem for the survival probability of a simple random walk among power-law renewal traps}

\author[J. Poisat]{Julien Poisat}
\address[J. Poisat]{Universit\'e Paris-Dauphine, CNRS, UMR [7534], CEREMADE, PSL Research University, 75016 Paris, France}
\email{poisat@ceremade.dauphine.fr}

\author[F. Simenhaus]{Fran\c cois Simenhaus}
\address[F. Simenhaus]{Universit\'e Paris-Dauphine, CNRS, UMR [7534], CEREMADE, 75016 Paris, France}
\email{simenhaus@ceremade.dauphine.fr}

\begin{abstract} We consider a one-dimensional simple random walk surviving among a field of static soft traps : each time it meets a trap the walk is killed with probability $1-e^{-\gb}$, where $\gb$ is a positive and fixed parameter. The positions of the traps are sampled independently from the walk and according to a renewal process. The increments between consecutive traps, or gaps, are assumed to have a power-law decaying tail with exponent $\gamma > 0$. We prove convergence in law for the properly rescaled logarithm of the quenched survival probability as time goes to infinity. The normalization exponent is $\gga/(\gga+2)$, while the limiting law writes as a variational formula with both universal and non-universal features. The latter involves (i) a Poisson point process that emerges as the universal scaling limit of the properly rescaled gaps and (ii) a function of the parameter $\gb$ that we call {\it asymptotic cost of crossing per trap} and that may, in principle, depend on the details of the gap distribution. Our proof {suggests} a {confinement strategy} of the walk in a single large gap.
This model may also be seen as a $(1+1)$-directed polymer among many repulsive interfaces, in which case $\gb$ corresponds to the strength of repulsion, the survival probability to the partition function and its logarithm to the finite-volume free energy. 
\end{abstract}

\thanks{\textit{Acknowledgements}. JP and FS are supported by the ANR/FNS-16-CE93-0003 grant MALIN. JP is also supported by the ANR-17-CE40-0032 grant SWiWS. FS is also supported by the ANR-15-CE40-0020-01 grant LSD}
\keywords{Random walks in random traps, polymers in random environments, parabolic Anderson model, survival probability, FKG inequalities, Ray-Knight theorems.}

\maketitle

{\it Disclaimer.} In this paper we denote by $\bbN$ the set of positive integers and $\bbN_0 = \bbN \cup \{0\}$. The letter $C$ is used for the constants whose values are irrelevant and may change from line to line.\\

\section{Introduction and model}
\label{sec:intro}

\par We consider a one-dimensional simple random walk surviving among a random field of static soft traps~: each time it meets a trap the walk is killed with probability $1-e^{-\gb}$ or survives with probability $e^{-\gb}$, where $\gb$ is a positive and fixed parameter, see Section~\ref{sec:rwrt} for a precise definition. The increments between consecutive traps, or gaps, are assumed to be mutually independent and independent from the walk, with a power-law decaying tail. This is what we refer to as {\it power-law renewal traps} in the title, see Section~\ref{sec:traps} for a precise definition. To be more precise, we deal with the {\it quenched} version of the model, meaning that the positions of the traps are frozen and the survival probability is computed only with respect to the law of the random walk.
Our main result, Theorem~\ref{thm0}, states a convergence in law for the properly rescaled logarithm of the quenched survival probability, seen as a random variable with respect to the field of traps, as time goes to infinity. The limiting law writes as a variational formula involving (i) a Poisson point process that emerges as the universal scaling limit of the properly rescaled gaps and (ii) a function of the parameter $\gb$ that we call {\it asymptotic cost of crossing per trap} and that may, in principle, depend on the details of the gap distribution, see the definition of $\lambda(\beta)$ in Proposition~\ref{pr:lambda.beta}. Even if we offer no path statement for the walk conditioned on surviving, our proof strongly suggests a {\it confinement strategy} according to which the walk remains in a large gap with an appropriate scale. Path localization could be considered as future work.
\par As we will see in Section~\ref{sec:polymer}, our model may also been seen as a $(1+1)$-directed polymer among many repulsive interfaces, in which case $\gb$ corresponds to the strength of repulsion. We will take advantage of this connection by using and revisiting some estimates obtained by Caravenna and Pétrélis~\cite{CP09b,CP09} in the case of periodic traps. We point out that the logarithm of the survival probability is the finite-volume free energy of the corresponding polymer model.\\

\par {\it Outline.} Section~\ref{sec:intro} contains the mathematical definition of the model and a discussion on the relation with other models, such as the directed polymer among multiple interfaces. Section~\ref{sec:result} contains the statement of Theorem~\ref{thm0}, which is our main result. At the beginning of the section we introduce several mathematical objects which are necessary for the understanding of the theorem. Comments and related open questions are listed at the end of Section~\ref{sec:result}. Sections~\ref{sec:KT} to~\ref{sec:pot} constitute the proof of the theorem. Key tools are gathered in Section~\ref{sec:KT}. The rest of the proof is split into a lower bound part (Section~\ref{sec:LB}), an upper bound part (Section~\ref{sec:UB}) and a conclusion (Section~\ref{sec:pot}). The more technical proofs are deferred to an appendix.

\subsection{A random walk in $\bbZ$ among soft traps}
\label{sec:rwrt}

We consider $S=(S_n)_{n\ge 0}$ a simple random walk on $\bbZ$ in presence of traps. We recall that the increments $(S_n - S_{n-1})_{n\ge 1}$ are independent and identically distributed (i.i.d.) random variables which are uniformly distributed on $\{-1,1\}$, and we shall write $\bP_x$ for the law of the walk started at $S_0 =x$, for $x\in\bbZ$, with the notational simplification $\bP_0 =\bP$. The positions of the traps are integers and they will be denoted by $\tau = \{\tau_n\}_{n\ge 0}$ with $\tau_0=0$. The increments $(\tau_{n+1} - \tau_n)_{n\ge0}$ may be referred to as {\it gaps}.
\par Let $\gb>0$ be a parameter of the model. Informally, each time the walk meets a trap, it is killed with probability $1-e^{-\gb}$ or survives with probability $e^{-\gb}$ independently from the past. The traps are called {\it soft}, by opposition to {\it hard} traps, because the walk has a positive chance to survive when sitting on a trap. For a precise mathematical definition let us first introduce $(\gt_n)_{n\ge 1}$, the clock process recording the times when the random walk visits $\tau$, that is
\beq
\gt_0 = 0,\quad \gt_{n+1} = \inf\{k > \gt_n \colon S_k \in \tau\},\quad n\ge 0.
\eeq
We enlarge the probability space so as to include a $\N$-valued geometric random variable $\cN$ with success parameter $1-e^{-\gb}$. This plays the role of the clock that kills the walk after its $\cN$-th meeting with the set of traps. We now define $\gs$ the death time of the walk by
\beq
\sigma = \gt_{\cN}.
\eeq
Note that our probability law now depends on the parameter $\beta$. We shall write $\bP^\beta_x$ when we want to stress this dependence or omit the superscript when no confusion is possible. Again we may write $\bP^\beta$ instead of $\bP_0^\beta$. We also point out that $\gs$ depends on $\tau$ through $\theta$ even if it is not explicit in the notations.\\
\par The {\it hitting times} of the walk are defined by
\beq
H_x = \inf\{n\geq 1\colon S_n = x \}, \qquad x\in\bbZ,
\eeq
and
\beq
H_{\Z^-} = \inf\{n\geq 1\colon S_n \leq 0 \}.
\eeq
\par In this paper we study the limiting behaviour of the probability that the walk survives up to time $n\in \bbN$, as $n$ gets large. For convenience, we consider walks that do not visit $\Z^-=-\N_0$. This extra condition allows to consider traps that are indexed by $\bbN_0$ instead of $\bbZ$ and does not hide anything deep nor change the main idea of the paper. Thus, our {\it survival probability} writes
\beq
\label{eq:defZn}
Z_n = \bP^{\gb}(\sigma \wedge H_{\Z^-} > n), \qquad n\in\bbN.
\eeq
We stress again that $Z_n$ is a function of the environment of traps $(\tau_n)_{n\geq 0}$.

\subsection{(De)pinning of a $(1+1)$-directed polymer by multiple interfaces} 
\label{sec:polymer}

\par By integrating on $\cN$ in~\eqref{eq:defZn}, we obtain
\beq
Z_n = \bE\Big[\exp\Big(-\gb \sum_{k=1}^n \ind_{\{S_k \in \tau\}}\Big) \ind_{\{H_{\Z^-}>n\}} \Big].
\eeq
This expression links the survival probability to a certain polymer model from statistical mechanics. More precisely, the expression above is the partition function of a $(1+1)$-directed polymer above an impenetrable wall and among many repulsive interfaces. Here, $(k,S_k)_{0\le k\le n}$ plays the role of a polymer with $n$ monomers and the parameter $n$, which is initially a time parameter, becomes the size of the polymer, see Figure~\ref{polymer}. Whenever the polymer touches one of the interfaces, located at the levels $\tau = \{\tau_n\}_{n\ge 0}$, it is penalized by a factor $e^{-\gb}$. Finally, the event $\{H_{\Z^-} > n\}$ reflects the presence of a hard wall at level $0$.

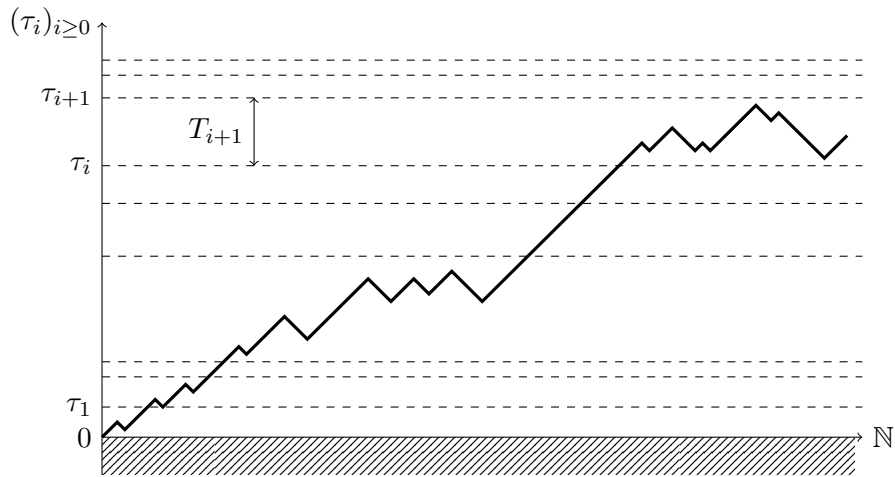
\begin{figure}[!h]
\centering
 \begin{tikzpicture}[scale=1]
 \draw[->] (-0,0) -- (10,0);
 \draw [->] (0,-0.5) -- (0,5.5);
 \draw (0,0) node[left] {$0$};
\draw (0,5.5) node[left] {$(\tau_i)_{i\geq 0}$};
\draw (10,0) node[right] {$\N$};
\draw (0,0.4) node[left] {$\tau_1$};
\draw (0,3.6) node[left] {$\tau_i$};
\draw (0,4.5) node[left] {$\tau_{i+1}$};
\draw (2,4.05) node[left] {$T_{i+1}$};
\draw [<->] (2,3.6) -- (2,4.5);
\draw [dashed] (0,1) -- (10,1);
\draw  [dashed] (0,0.4) -- (10,0.4);
\draw [dashed] (0,0.8) -- (10,0.8);
\draw  [dashed] (0,2.4) -- (10,2.4);
\draw [dashed](0,3.1) -- (10,3.1);
\draw [dashed](0,3.6) -- (10,3.6);
\draw [dashed](0,4.5) -- (10,4.5);
\draw[dashed] (0,5) -- (10,5);
\draw [dashed](0,4.8) -- (10,4.8);
\fill[pattern=north east lines] (0,0) -- (9.9,0) -- (9.9,-0.5)-- (0,-0.5) ;
\draw [very thick] (0,0) -- (0.1,0.1)--(0.2,0.2)--(0.3,0.1)--(0.4,0.2)--(0.5,0.3)--(0.6,0.4)--(0.7,0.5)--(0.8,0.4)--(0.9,0.5)--(1.0,0.6)--(1.1,0.7)--(1.2,0.6)--(1.8,1.2)--(1.9,1.1)--(2.4,1.6)--(2.6,1.4)--(2.7,1.3)--(2.9,1.5)--(3.5,2.1)--(3.8,1.8)--(4.1,2.1)--(4.3,1.9)--(4.6,2.2)--(5,1.8)--(5.4,2.2)--(7.1,3.9)--(7.2,3.8)--(7.5,4.1)--(7.8,3.8)--(7.9,3.9)--(8,3.8)--(8.6,4.4)--(8.8,4.2)--(8.9,4.3)--(9.5,3.7)--(9.8,4);
\end{tikzpicture}
\caption{Example of a polymer among repulsive interfaces. The dashed lines correspond to the interfaces, the thick one to the polymer and the shaded area to the hard wall.}
\label{polymer}
\end{figure}

\par There is a rich literature about the pinning phenomenon in polymer models, for which we refer to~\cite{dH09, Gia07, Gia11}. In general, the parameter $\gb$ can have any sign (thus $\gb<0$ corresponds to the attractive case with our notations) and in the simplest case of a unique interface at level zero without wall, one observes a localization transition at $\beta=0$. The case $\beta<0$ is known as the \textit{repulsive} or \textit{delocalized} phase as the polymer typically touches the interface only finitely many times. The case $\beta >0$ is called instead the \textit{attractive} or \textit{localized} phase as the polymer typically touches the interface a positive fraction of time. Caravenna and Pétrélis studied the case of multiple interfaces periodically located at levels $t_n\bbZ$, where the period $t_n\ge 1$ depends on the size of the polymer $n$, both in the attractive~\cite{CP09} and repulsive case~\cite{CP09b}. In contrast, the positions of our interfaces are random, but do not vary in $n$. However, the size of the {\it relevant gaps} does depend on $n$, which explains why we use (and extend) some estimates from~\cite{CP09b}. Note also the difference between our results, since Caravenna and Pétrélis obtained results at the level of paths, which means information on the (de)localization of the path of the random walk under the polymer measure
\beq
\frac{\dd \bP_n^{\gb}}{\dd \bP} \propto \exp\Big(-\gb \sum_{k=1}^n \ind_{\{S_k \in \tau\}}\Big),
\eeq
(where $\propto$ means ``proportional to'') while this is not the purpose of the present paper, see Comment $8$ in Section~\ref{subsec:c}.

\subsection{Assumption on the traps}
\label{sec:traps}
We now put a probability measure $\bbP$ on the environment of traps.
We denote by $T_k = \tau_k - \tau_{k-1}$, for $k\in\bbN$, the increments, that is the size of the intervals between two consecutive traps, which we call {\it gaps}. We assume that, under $\bbP$, $\tau$ is a discrete renewal process, that is the $(T_k)$'s are i.i.d. $\bbN$-valued random variables. We further assume that $\tau_0=0$ and that the increments have a power-tail distribution:
\beq
\label{eq:tail.ass}
\bbP(T_1 = n) \sim c_\tau\ n^{-(1+\gamma)},\qquad \gamma>0,\quad n\to\infty,
\eeq
where $c_\tau$ is a positive constant.
We recall the following standard limit theorem, see e.g.\ Petrov~\cite[Theorem 14, p. 91]{Petrov}.
\begin{proposition}
\label{thm:limit.stable}
If $\gamma\in(0,1]$, the sequence $(\tau_n/ n^{1/\gamma})_{n\ge 1}$ converges in law to a (totally skewed to the right) $\gamma$-stable random variable with scale parameter $c_\tau$. If $\gamma>1$ then $(\tau_n/n)_{n\ge 1}$ converges almost-surely to $\bbE(T_1)$.
\end{proposition}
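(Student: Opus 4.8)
The plan is to split along the value of $\gamma$, which is precisely the threshold separating finite from infinite mean of a single gap. In the regime $\gamma>1$, the first observation is that \eqref{eq:tail.ass} forces $\bbE(T_1)=\sum_{n\ge1}n\,\bbP(T_1=n)<\infty$, since the summand is $\sim c_\tau n^{-\gamma}$ with $\gamma>1$. As $\tau_n=\sum_{k=1}^{n}T_k$ is a sum of i.i.d.\ integrable random variables, the claimed almost sure convergence $\tau_n/n\to\bbE(T_1)$ is then just Kolmogorov's strong law of large numbers, and nothing further is required.

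In the regime $\gamma\in(0,1]$, I would first upgrade the local tail \eqref{eq:tail.ass} to a statement on the survival function: summing over $m>n$,
\[
\bbP(T_1>n)=\sum_{m>n}\bbP(T_1=m)\sim\frac{c_\tau}{\gamma}\,n^{-\gamma},\qquad n\to\infty,
\]
so the tail of $T_1$ is regularly varying with index $-\gamma\in[-1,0)$. This is exactly the classical criterion (Gnedenko--Kolmogorov; Petrov~\cite[Theorem 14, p. 91]{Petrov}) for $T_1$ to lie in the domain of attraction of a $\gamma$-stable law. The three pieces of data of the limit then come essentially for free: the stability index is $\gamma$; the skewness parameter equals $+1$ (``totally skewed to the right'') because $T_1\ge1$ leaves no left tail; and the norming sequence $b_n$ is fixed by $n\,\bbP(T_1>b_n)\to\mathrm{const}$, which with the asymptotics above gives $b_n\asymp n^{1/\gamma}$, i.e.\ the normalization appearing in the statement. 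To pin down the scale parameter as exactly $c_\tau$, I would pass to the limit in the characteristic function, checking that $n\,\bbE\!\big[e^{\mathrm{i}uT_1/b_n}-1\big]$ converges to $\int_0^{\infty}\!\big(e^{\mathrm{i}ut}-1-\mathrm{i}ut\,\ind_{\{t\le1\}}\big)\,\gamma c_\tau\,t^{-1-\gamma}\,\dd t$ (the truncation being the standard centering, relevant only at $\gamma=1$), and identifying the right-hand side with the L\'evy--Khintchine exponent of the $\gamma$-stable law with scale $c_\tau$.

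The structural content is therefore just the regular variation of the gap tail combined with the classical domain-of-attraction theorem. The part that actually requires care --- and the only genuine obstacle --- is the bookkeeping that matches the multiplicative constant to $c_\tau$ under a fixed parametrization of stable laws, together with the borderline case $\gamma=1$, where the centering is a slowly varying (logarithmic) sequence and must be handled with a bit more attention. As all of this is contained in the quoted theorem of Petrov, in the paper this proposition is simply cited rather than reproved.
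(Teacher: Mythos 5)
Your proposal is correct and matches the paper's approach exactly: the paper offers no proof of this proposition, delegating it entirely to the cited theorem of Petrov, and what you have written is precisely the standard unpacking of that citation — strong law of large numbers for $\gamma>1$, and regular variation of $\bbP(T_1>n)\sim (c_\tau/\gamma)\,n^{-\gamma}$ feeding into the classical domain-of-attraction theorem for $\gamma\in(0,1]$. One point you rightly flag deserves emphasis: at the boundary $\gamma=1$ the mean is infinite yet barely so, and the uncentered ratio $\tau_n/n$ actually diverges in probability; the classical theorem gives convergence of $(\tau_n-a_n)/n$ for a centering sequence $a_n\sim c_\tau\,n\log n$, so the proposition as stated is implicitly absorbing that centering, and any careful write-up (including yours) must say so explicitly for $\gamma=1$.
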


\subsection{Parabolic Anderson model with a correlated potential} 
Our model is also connected to a discrete analogue of the parabolic Anderson model (PAM) with potential $V(x) = -\gb \ind_{\{x\in \tau\}} - \8 \ind_{\{x \le 0\}}$, that is the heat equation with random potential $V$, 
\beq
\partial_t u=\Delta u+ V u.
\eeq
 There is a rich literature and intense activity around the PAM. We refer to König~\cite{Ko16} for a recent survey on this topic. Note that the potential is usually chosen as a sequence of random variables that are independent in the space parameter. In contrast, our potential exhibits long-range spatial correlations, that is one of the research direction suggested in~\cite[Section 7.2]{Ko16}. For a review of potentials in discrete and continuous space, we refer to ~\cite[Section 1.5]{Ko16}.

\par Let us end this section with a reference to the classical monograph by Sznitman~\cite{Sz98} on random motions in random media. Chapter 6 is of particular interest to us as it highlights the link with directed polymers in the presence of columnar defects and introduces the concept of pinning effect of quenched path measures, see again Comment $8$ in Section~\ref{subsec:c}.

\section{Results}
\label{sec:result}

In this section we first introduce the various tools needed to state our main theorem. We first prove the existence of the asymptotic exponential cost for the walker to cross a new trap without being killed (Section \ref{subsec:acct}). Then, in Section \ref{subsec:coe}, we consider the environment as a point process and investigate some properties of its asymptotic limit that is a Poisson point process with explicit intensity measure. We finally state our result in Section \ref{subsec:sor} and conclude with a few comments in Section \ref{subsec:c}.
\subsection{Asymptotic cost of crossing traps}
\label{subsec:acct}

We first recall without proof the following standard proposition:
\begin{proposition}
\label{pr:rw_exit.dist}
For all $t\in\bbN$, 
\beq
\bP_1(H_t < H_0) = 1/t.
\eeq
\end{proposition}

Let us define, for $\ell\ge 1$, the random variable (with respect to $\tau$)
\beq
\label{def:gl.ell.gb}
\lambda(\ell,\gb) = -\frac{1}{\ell} \log \bP^{\gb}(H_{\tau_\ell} < H_{0} \wedge \sigma).
\eeq
Note that in the above definition $H_{0}$ could be safely replaced by $H_{\Z^-}$.
The next proposition gives the existence of an {\it asymptotic cost of crossing per trap}, which will play a crucial role in our theorem.
\begin{proposition}
\label{pr:lambda.beta}
For all $\gb >0$ there exists a positive constant $\gl(\gb) = \gl(\gb,\bbP)$ such that, $\bbP$-a.s.\ and in $L_1(\bbP)$,
\beq
\lim_{\ell\to\8}  \lambda(\ell,\gb) = \gl(\gb),
\eeq
with
\beq
\label{eq:encadr.lambda}
0 \le \gl(\gb) - \gb \le \bbE(\log T_1)+\log 2.
\eeq
\end{proposition}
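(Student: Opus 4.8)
The plan is to recognize $a_\ell := \ell\,\lambda(\ell,\gb) = -\log \bP^{\gb}(H_{\tau_\ell} < H_0 \wedge \sigma)$ as a subadditive sequence over the ergodic environment of gaps and to invoke Kingman's subadditive ergodic theorem. First I would rewrite, exactly as in Section~\ref{sec:polymer} (integrating out the geometric killing clock $\cN$),
\[
\bP^{\gb}(H_{\tau_\ell} < H_0 \wedge \sigma) = \bE\big[ e^{-\gb M_\ell}\,\ind_{\{H_{\tau_\ell}<H_0\}}\big],
\qquad M_\ell := \#\{1\le k\le H_{\tau_\ell}\colon S_k\in\tau\},
\]
so that $M_\ell$ is the number of trap visits made before reaching $\tau_\ell$. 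This is the form that makes all the subsequent factorizations transparent.

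\emph{Subadditivity.} Writing $\theta_\ell\tau = \{\tau_{\ell+j}-\tau_\ell\}_{j\ge0}$ for the shifted environment and displaying the environment-dependence as $a_\ell(\tau)$, I would bound $\bP^{\gb}(H_{\tau_{\ell+m}}<H_0\wedge\sigma)$ from below by the probability of the same event intersected with $\{H_{\tau_\ell}<H_0\}$ and with the requirement that, after time $H_{\tau_\ell}$, the walk not return to level $\tau_\ell$ before hitting $\tau_{\ell+m}$. On this event $M_{\ell+m}$ splits into the visits before $H_{\tau_\ell}$ plus those after, so $e^{-\gb M_{\ell+m}}$ factorizes; the strong Markov property at $H_{\tau_\ell}$ together with translation invariance of the walk then gives $a_{\ell+m}(\tau)\le a_\ell(\tau)+a_m(\theta_\ell\tau)$. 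Since the gaps $(T_k)$ are i.i.d.\ under $\bbP$, the shift $\theta$ is measure-preserving and ergodic, so $(X_{m,n})=(a_{n-m}\circ\theta^m)$ meets the hypotheses of Kingman's theorem as soon as $\bbE[a_1]<\8$.

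\emph{Two-sided control of $a_\ell$.} For the lower bound, to reach $\tau_\ell$ from $0$ the walk must visit each of $\tau_1,\dots,\tau_\ell$, hence $M_\ell\ge\ell$ on $\{H_{\tau_\ell}<\8\}$, giving $\bP^{\gb}(H_{\tau_\ell}<H_0\wedge\sigma)\le e^{-\gb\ell}$, i.e.\ $a_\ell\ge\gb\ell$. For the upper bound I would restrict the walk to the ``record'' strategy in which, for each $k=1,\dots,\ell$, it climbs from $\tau_{k-1}$ to $\tau_k$ without ever returning to $\tau_{k-1}$; on this event the walk stays in $[\tau_{k-1},\tau_k]$ during the $k$-th leg, so it meets exactly one trap per leg and $M_\ell=\ell$. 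By the strong Markov property this event has probability $\prod_{k=1}^{\ell}\bP_{\tau_{k-1}}(H_{\tau_k}<H_{\tau_{k-1}})$, and a first-step decomposition together with Proposition~\ref{pr:rw_exit.dist} gives $\bP_{\tau_{k-1}}(H_{\tau_k}<H_{\tau_{k-1}})=\tfrac1{2T_k}$. Hence
\[
\bP^{\gb}(H_{\tau_\ell}<H_0\wedge\sigma)\ \ge\ e^{-\gb\ell}\prod_{k=1}^{\ell}\frac1{2T_k},
\qquad\text{i.e.}\qquad
a_\ell\ \le\ \ell(\gb+\log2)+\sum_{k=1}^{\ell}\log T_k .
\]
In particular $\bbE[a_1]\le \gb+\log 2+\bbE(\log T_1)<\8$, since \eqref{eq:tail.ass} with $\gamma>0$ makes $\log T_1$ integrable; thus Kingman's theorem applies.

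\emph{Conclusion.} Kingman's subadditive ergodic theorem, together with ergodicity of $\theta$, yields that $\lambda(\ell,\gb)=a_\ell/\ell$ converges $\bbP$-a.s.\ and in $L^1(\bbP)$ to the constant $\gl(\gb)=\inf_{\ell\ge1}\bbE[a_\ell]/\ell$. Dividing the two-sided bound on $a_\ell$ by $\ell$, letting $\ell\to\8$, and using the strong law of large numbers $\frac1\ell\sum_{k\le\ell}\log T_k\to\bbE(\log T_1)$, we obtain $\gb\le\gl(\gb)\le\gb+\log2+\bbE(\log T_1)$, which is \eqref{eq:encadr.lambda}; positivity of $\gl(\gb)$ follows since $\gl(\gb)\ge\gb>0$. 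I expect the one genuinely delicate point to be the bookkeeping around the killing time $\sigma$ and the counter $M_\ell$ when the trajectory is cut at $H_{\tau_\ell}$ or decomposed leg by leg: one must check, via the memorylessness of $\cN$ encoded in the $e^{-\gb M}$ representation and the fact that the constrained walk never backtracks below a level it has already crossed, that no trap visit is double-counted or lost. Everything else is routine.
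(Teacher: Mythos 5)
Your proof is correct and follows essentially the same route as the paper's: identify $\ell\lambda(\ell,\beta)$ as a subadditive process over the i.i.d.\ gap environment (via the Markov property and the ``cross $\tau_j$ without returning'' decomposition, which is exactly the paper's $\cZ(i,k)\le\cZ(i,j)+\cZ(j,k)$), check integrability via the same two-sided bound $\gb\ell\le a_\ell\le(\gb+\log 2)\ell+\sum_{k\le\ell}\log T_k$ obtained from the monotone confinement strategy and Proposition~\ref{pr:rw_exit.dist}, and then invoke Kingman's theorem together with the law of large numbers. The only cosmetic difference is that you phrase subadditivity using the shift $\theta_\ell$ and the explicit representation $e^{-\gb M_\ell}$, while the paper keeps the doubly-indexed array $\cZ(i,j)$; these are interchangeable.
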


Note that $\log T_1$ is integrable because of~\eqref{eq:tail.ass}.
\begin{proof}[Proof of Proposition~\ref{pr:lambda.beta}]
Let us define a collection of random variables indexed by:
\beq
\cZ(i,j) = - \log \bP_{\tau_i}(H_{\tau_j} < H_{\tau_i} \wedge \gs), \qquad 0\le i < j.
\eeq
Thus, we are interested in the limit of $(\cZ(0,\ell)/\ell)_{\ell\ge 1}$. Let $1\le i<j<k$. To go from $\tau_i$ to $\tau_k$ without being killed, one strategy is to go first from $\tau_i$ to $\tau_j$ and survive, then from $\tau_j$ to $\tau_k$ and survive without coming back to $\tau_j$, which, by the Markov property, leads to the inequality
\beq
\label{eq:subadd}
\cZ(i,k) \le \cZ(i,j) + \cZ(j,k).
\eeq
By stationarity of the sequence $(T_k)_{k\ge 1}$, $\cZ(i,j)$ has the same law as $\cZ(0,j-i)$. Moreover, by Proposition \ref{pr:rw_exit.dist},
\beq
\cZ(i-1,i) \le \gb + \log T_i + \log 2,\qquad i\in\bbN,
\eeq
and one gets by iterating~\eqref{eq:subadd}
\beq
\gb\ell \le \cZ(0,\ell) \le (\gb+\log 2)\ell + \sum_{1\le i\le \ell} \log T_i
\eeq
(the lower bound follows simply from the fact that surviving to a newly visited trap costs at least $e^{-\gb}$). We may now conclude with Kingman's sub-additive ergodic theorem (see Theorem 7.4.1 in \cite{Durrett}) and the law of large numbers.
\end{proof}

\begin{proposition}\label{lem:continuity_lambda}
The function $\beta\to\lambda(\beta)$ is continuous on $\R_+^*$.
\end{proposition}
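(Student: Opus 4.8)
The plan is to exploit the same subadditive/almost-sure structure used to construct $\lambda(\beta)$, but now as a function of the two variables $(\ell,\beta)$ simultaneously, together with monotonicity and convexity-type properties in $\beta$. First I would observe that for each fixed environment $\tau$ and each $\ell$, the map $\beta \mapsto \lambda(\ell,\beta)$ is continuous (indeed real-analytic): writing out $\bP^\beta(H_{\tau_\ell} < H_0 \wedge \sigma)$ by integrating over the killing clock $\cN$ as in Section \ref{sec:polymer}, one gets
\beq
\bP^{\beta}(H_{\tau_\ell} < H_0 \wedge \sigma) = \bE\Big[ e^{-\beta N_\ell}\, \ind_{\{H_{\tau_\ell} < H_0\}}\Big],
\eeq
where $N_\ell = \sum_{k=1}^{H_{\tau_\ell}} \ind_{\{S_k \in \tau\}}$ counts the trap visits before first reaching $\tau_\ell$. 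This is a power series in $e^{-\beta}$ with nonnegative coefficients summing to $\bP_1(H_{\tau_\ell}<H_0)=1/\tau_\ell>0$, hence strictly positive and real-analytic in $\beta$ on $\R_+^*$; so $\lambda(\ell,\cdot)$ is continuous there, and moreover $\lambda(\ell,\cdot)$ is nondecreasing in $\beta$ (more killing cannot help) and convex in $\beta$ (as $-\tfrac1\ell\log$ of a moment generating function of $-N_\ell$ against a finite positive measure).

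Next I would pass these properties to the limit. Convexity is preserved under pointwise limits, so $\beta \mapsto \lambda(\beta)$ is convex on $\R_+^*$; monotonicity is likewise preserved, so it is nondecreasing. A convex function on an open interval is automatically continuous there, which gives the claim directly. The only thing that must be checked to make this argument rigorous is that the almost-sure limit defining $\lambda(\beta)$ exists for \emph{every} $\beta\in\R_+^*$ on a \emph{single} full-measure event (not an event depending on $\beta$); this is handled by first taking the countable dense set $\Q\cap\R_+^*$, applying Proposition \ref{pr:lambda.beta} on each rational $\beta$, intersecting the countably many full-measure events, and then using the monotonicity of $\beta\mapsto\lambda(\ell,\beta)$ together with continuity in $\beta$ of the prelimit to squeeze the limit at irrational $\beta$ between the limits at nearby rationals. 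Concretely, for $\beta_-<\beta<\beta_+$ rational one has $\lambda(\ell,\beta_-)\le\lambda(\ell,\beta)\le\lambda(\ell,\beta_+)$, hence $\lambda(\beta_-)\le\liminf_\ell\lambda(\ell,\beta)\le\limsup_\ell\lambda(\ell,\beta)\le\lambda(\beta_+)$, and letting $\beta_\pm\to\beta$ along rationals and using continuity of the (already constructed) convex function $\lambda(\cdot)$ on $\Q\cap\R_+^*$ pins down the limit and its value.

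An alternative, slightly more hands-on route avoids convexity: from the bound $\cZ(i-1,i)\le\beta+\log T_i+\log 2$ and the trivial lower bound $\cZ(i-1,i)\ge\beta$ one reads off, for $\beta_1<\beta_2$, the deterministic envelope $0\le \cZ^{\beta_2}(0,\ell)-\cZ^{\beta_1}(0,\ell)$, and one can further bound this difference above by $(\beta_2-\beta_1)\,\bE^{\beta_2}_{0,\ell}[N_\ell]$ via the elementary inequality $0\le -\log\bE[e^{-\beta_2 N}]+\log\bE[e^{-\beta_1 N}] \le (\beta_2-\beta_1)\,\bE_{\beta_2}[N]$ where $\bE_{\beta_2}$ is the tilted law; controlling $\tfrac1\ell\bE^{\beta_2}_{0,\ell}[N_\ell]$ uniformly on compact $\beta$-intervals (it is at most a constant, since each trap crossing costs order one) then yields local Lipschitz continuity of $\lambda(\ell,\cdot)$ uniformly in $\ell$, and the uniform limit of equi-Lipschitz functions is Lipschitz, hence continuous. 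I expect the main obstacle to be purely bookkeeping: making the ``single full-measure event works for all $\beta$'' step clean, and, in the second route, getting the uniform-in-$\ell$ control of the expected number of trap visits $\tfrac1\ell\bE^{\beta}_{0,\ell}[N_\ell]$ on compacts — though this is not deep, since $N_\ell\le H_{\tau_\ell}$ and, more usefully, a crossing-by-crossing decomposition shows the tilted expected number of returns to each already-visited trap is uniformly bounded for $\beta$ bounded away from $0$.
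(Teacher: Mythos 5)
Your approach is essentially the same as the paper's: both establish a concavity-type property of $\beta\mapsto\lambda(\ell,\beta)$ that is preserved under pointwise limits, and then invoke the fact that such a function is automatically continuous on an open interval. However, you have the sign wrong. Writing $\bP^\beta(H_{\tau_\ell}<H_0\wedge\sigma)=\bE\bigl[e^{-\beta N_\ell}\ind_{\{H_{\tau_\ell}<H_0\}}\bigr]$, the map $\beta\mapsto\log\bE\bigl[e^{-\beta N_\ell}\ind_{\{\cdots\}}\bigr]$ is the log of a moment generating function (against a finite positive measure), hence \emph{convex} in $\beta$; taking $-\frac1\ell\log$ therefore produces a \emph{concave} function, not a convex one. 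The paper verifies this directly by computing $\partial_\beta^2\lambda(\ell,\beta)=-\frac1\ell\mathrm{Var}_{\tilde{\mathrm{P}}_\beta}(\sum\ind_{\{S_k\in\tau\}})\le 0$. Since concave and convex functions on an open interval are equally continuous there, your conclusion survives; but the statement as written is false and a reader applying it literally (e.g.\ to deduce that $\lambda$ lies above chords) would be misled.

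Two further remarks. First, your worry about a single full-measure event holding simultaneously for all $\beta$ is unnecessary: Proposition~\ref{pr:lambda.beta} identifies $\lambda(\beta)$ as a deterministic constant (by Kingman's theorem and ergodicity of the i.i.d.\ shift), so to verify the concavity inequality $\lambda(t\beta_1+(1-t)\beta_3)\ge t\lambda(\beta_1)+(1-t)\lambda(\beta_3)$ for any fixed triple it suffices to pass to the limit along the intersection of three full-measure sets; no diagonalisation over rationals is required. Second, in your alternative ``Lipschitz'' route the tilt is also on the wrong side: by convexity of $\phi(\beta)=\log\bE[e^{-\beta N}]$, one has $\phi(\beta_1)-\phi(\beta_2)\le(\beta_2-\beta_1)\,\bE_{\beta_1}[N]$ (the tilt at the \emph{smaller} $\beta$, where the tilted measure puts more weight on large $N$), not $\bE_{\beta_2}[N]$. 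Again this does not kill the argument, since you would control $\bE_\beta[N]/\ell$ uniformly over a compact $\beta$-interval in either case, but the inequality as stated is reversed.
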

\begin{proof}[Proof of Proposition~\ref{lem:continuity_lambda}]
We prove that the function is concave on $(0,+\8)$: as it is finite, it implies continuity. We observe by integrating over $\cN$ that
\beq
 \lambda(\ell, \beta)=-\frac{1}{\ell}\log\bE\Big(\exp\Big(-\beta \sum_{k=1}^{H_{\tau_\ell}}1_{\{S_k\in \tau\}} \Big)  \ind_{\{H_{\tau_\ell} < H_{0}\}}\Big).
\eeq
A basic interchange theorem allows us to write
\beq
\partial_{\beta}^2 \lambda(\ell,\beta)= -\frac{1}{\ell}\var_{\tilde \bP_\gb}\Big(\sum_{k=1}^{H_{\tau_\ell}}1_{\{S_k\in \tau\}}\Big) \le 0,
\eeq
where $\tilde \bP_\gb$ is absolutely continuous with respect to $\bP$, with Radon-Nikodym derivative:
\beq
\frac{\dd \tilde \bP_\gb}{\dd \bP} = \exp\Big( - \gb \sum_{k=1}^{H_{\tau_\ell}}1_{\{S_k\in \tau\}} \Big) \ind_{\{H_{\tau_\ell} < H_{0}\}}.
\eeq
We deduce thereby that $\beta \mapsto \lambda(\ell,\beta)$ is concave. Therefore $\gl$ is concave as the almost-sure limit of concave functions.
\end{proof}

\subsection{Convergence of the environment}
\label{subsec:coe}
In this section we recall a few elements of point processes, see~\cite{Res} for more background on this theory.
\par Define the quadrant $E:=[0,+\8)\times (0,+\8)$ and consider $\mathcal{E}$ the Borel $\sigma-$algebra on $E$. We say a measure $\mu$ on $(E,\mathcal{E})$ to be a point measure if $\mu$ can be written
\beq
\label{eq:mp}
\mu:=\sum_{i=1}^{+\8}\delta_{z_i}
\eeq
where, for any $a$ in $E$, $\delta_a$ denotes the Dirac measure in $a$ and $(z_i)_{i\geq 1}$ is a family of points in $E$. If $\mu$ can be written as in \eqref{eq:mp} we say that \textit{$z_i \in \mu$} ($i\geq 1$) even if we should say that $z_i$ is in the support of $\mu$.  
\par We call $M_p(E)$ the set of all Radon point measures on $E$, that are the point measures $\mu$ such that $\mu(K)<+\8$ for all compact sets $K\subset E$. We endow $M_p(E)$ with the $\sigma-$algebra $\mathcal{M}_p(E)$ defined as the smallest $\sigma-$algebra that makes applications $\mu \mapsto \mu(F)$ measurable for all $F \in \mathcal{E}$.
Let $C^+_K(E)$ be the set of continuous non-negative functions on $E$ with compact support. A sequence $(\mu_n)_{n\geq 1}$ in $M_p(E)$ is said to converge vaguely to $\mu$, which we note $\mu_n \stackrel{v}{\to} \mu$, if for any $f$ in $C^+_K(E)$
\beq
\int f\ \dd\mu_n \to  \int f\ \dd\mu,\qquad n\to +\8.
\eeq
This provides a topology on $M_p(E)$ that turns out to be metrisable, separable, and complete. 
In this context, a sequence of probability measures $(P_n)_{n\geq 1}$ on $\mathcal{M}_p(E)$ converges weakly to $P$, which we note $P_n \stackrel{w}{\to} P$, if for every $\Theta$ vaguely continuous and bounded on  $M_p(E)$,
\beq
\int \Theta\ \dd P_n \to  \int \Theta\ \dd P,\qquad n\to +\8.
\eeq

\par We now come back to our context. For $n\geq 1$ we define
\beq
\ba
&(X^n_i,Y^n_i) := \Big(\frac{i-1}{n}, \frac{T_i}{n^{1/\gamma}}\Big) \qquad \textrm{for all } i\geq 1,\\
&\textrm{and }\qquad \Pi_n =\sum_{i=1}^{+\infty}\delta_{(X_i^n,Y_i^n)}.
\ea
\eeq
We observe that $\Pi_n$ is a random variable that takes values in $M_p(E)$.
Recall the definition of $c_\tau$ in \eqref{eq:tail.ass} and define $\Pi$ to be a Poisson point process on $E$ with intensity 
\beq
\label{eq:def_p}
p:=\dd x \otimes \frac{c_{\tau}\gamma}{y^{\gamma+1}}\ \dd y, 
\eeq
that is, for all finite families of disjoint events $(A_i)_{1\leq i \leq n} \in \mathcal{E}^n$, $(\Pi(A_i))_{1\leq i \leq n}$ are independent Poisson random variables with respective parameters $(p(A_i))_{1\leq i \leq n}$.
\begin{proposition}
\label{prop:convergenceP}
It holds that
$
\Pi_n \stackrel{w}{\rightarrow} \Pi.
$
\end{proposition}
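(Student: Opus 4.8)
The plan is to prove the weak convergence $\Pi_n \xrightarrow{w} \Pi$ via the standard Laplace functional (or, equivalently, void-probability) criterion for convergence of point processes towards a Poisson point process, as found in Resnick~\cite{Res}. Recall that a sequence $(\Pi_n)$ of random elements of $M_p(E)$ converges weakly to a Poisson point process $\Pi$ with intensity $p$ if and only if $\e\big[\exp(-\int f\,\dd\Pi_n)\big] \to \exp\big(-\int (1-e^{-f})\,\dd p\big)$ for every $f \in C_K^+(E)$. So the first step is to fix $f \in C_K^+(E)$, choose $T>0$ and $0<a<b$ with $\mathrm{supp}(f) \subset [0,T]\times[a,b]$, and observe that since $X_i^n = (i-1)/n$, only the indices $i$ with $1 \le i \le \lceil nT \rceil$ (call this range $\cI_n$, of cardinality $\sim nT$) contribute to $\int f\,\dd\Pi_n = \sum_{i \in \cI_n} f(X_i^n, Y_i^n)$.

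The second step is to exploit the i.i.d.\ structure of the gaps $(T_i)$. Since the $Y_i^n = T_i/n^{1/\gamma}$ are i.i.d., and the points $X_i^n$ are deterministic, we can write
\beq
\e\Big[\exp\Big(-\int f\,\dd\Pi_n\Big)\Big] = \prod_{i\in\cI_n} \e\Big[\exp\big(-f(X_i^n, T_1/n^{1/\gamma})\big)\Big] = \prod_{i\in\cI_n}\Big(1 - \e\big[1 - e^{-f(X_i^n, T_1/n^{1/\gamma})}\big]\Big).
\eeq
Taking logarithms and using $\log(1-u) = -u + O(u^2)$, it suffices to show that $\sum_{i\in\cI_n} \e\big[1-e^{-f(X_i^n, T_1/n^{1/\gamma})}\big] \to \int_E (1-e^{-f})\,\dd p$ and that the sum of the squared terms vanishes. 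For the main term, write each summand as $\e\big[g(X_i^n, T_1/n^{1/\gamma})\big]$ with $g := 1-e^{-f}$, also continuous with compact support in $[0,T]\times[a,b]$, and use the tail assumption~\eqref{eq:tail.ass}: since $T_1/n^{1/\gamma} \in [a,b]$ forces $T_1 \in [an^{1/\gamma}, bn^{1/\gamma}]$, a range where $\bbP(T_1 = m) \sim c_\tau m^{-(1+\gamma)}$, a Riemann-sum argument gives
\beq
n\,\e\big[g(x, T_1/n^{1/\gamma})\big] = n\sum_{m\ge1} g(x, m/n^{1/\gamma})\,\bbP(T_1=m) \longrightarrow \int_0^\infty g(x,y)\,\frac{c_\tau\gamma}{y^{\gamma+1}}\,\dd y
\eeq
uniformly in $x$; summing $\frac1n$ times this over $i\in\cI_n$ (a Riemann sum in the $x$-variable over $[0,T]$) yields $\int_E g\,\dd p = \int_E(1-e^{-f})\,\dd p$, as desired. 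The error-term sum is $O(n \cdot (C/n)^2) = O(1/n) \to 0$ since each summand is $O(1/n)$ uniformly.

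The main technical obstacle is the uniform Riemann-sum estimate in the displayed limit above, i.e.\ showing that $n\,\e[g(x,T_1/n^{1/\gamma})]$ converges to $\int_0^\infty g(x,y)\,c_\tau\gamma\,y^{-(\gamma+1)}\,\dd y$ uniformly in $x \in [0,T]$. The subtlety is that the tail asymptotic~\eqref{eq:tail.ass} is only an asymptotic equivalence as $m\to\infty$, so one must control the discrepancy between $\bbP(T_1=m)$ and $c_\tau m^{-(1+\gamma)}$ for $m$ in the window $[an^{1/\gamma}, bn^{1/\gamma}]$; since $a>0$ this window recedes to infinity, so the error is uniformly $o(m^{-(1+\gamma)})$ there, and the compact support of $g$ keeps everything finite. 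Writing $\bbP(T_1=m) = c_\tau m^{-(1+\gamma)}(1+\epsilon_m)$ with $\sup_{m \ge an^{1/\gamma}}|\epsilon_m| \to 0$, the sum splits into a genuine Riemann sum for the integral (handled by uniform continuity of $g$ and dominated convergence, since $y^{-(1+\gamma)}$ is integrable away from $0$ and $g$ vanishes near $0$) plus an error controlled by $\sup|\epsilon_m|$ times a convergent sum. I would defer the bookkeeping of this uniform estimate, together with the verification that the product-to-sum passage is legitimate (all but finitely many factors equal $1$ is false here, but each factor is $1+O(1/n)$ and there are $O(n)$ of them, so $\log$ of the product is a well-behaved sum), to keep the argument clean; none of it is deep, but it is where the real work lies.
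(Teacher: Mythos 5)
Your proof is correct but takes a genuinely different route from the paper's. The paper simply invokes Resnick's Proposition~3.21 (p.~154), which is precisely the statement that if $\{X_{n,j}\}_{j\ge1}$ are i.i.d.\ for each $n$ and $n\,\bbP(X_{n,1}\in\cdot)$ converges vaguely to some Radon measure $\mu$, then $\sum_j\delta_{((j-1)/n,\,X_{n,j})}$ converges weakly to a Poisson process with intensity $\dd x\otimes\mu$. The only thing the paper then has to check is $n\,\bbP(T_1>y\,n^{1/\gamma})\to c_\tau/y^\gamma$, which is immediate from~\eqref{eq:tail.ass}. What you do instead is essentially re-prove the relevant case of Resnick's proposition from scratch: you verify the Laplace-functional criterion directly, exploiting the i.i.d.\ structure to factorize, then log-expand and carry out a two-dimensional Riemann-sum argument (first in the $y$-variable using the tail asymptotic, then in the $x$-variable over the equally-spaced deterministic marks). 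The skeleton is sound: the compact support of $f$ bounds the index range $\cI_n$ to size $O(n)$, each factor is $1+O(1/n)$ uniformly (since $\bbP(T_1\in[an^{1/\gamma},bn^{1/\gamma}])=O(1/n)$), so the $O(u^2)$ remainder from $\log(1-u)$ sums to $O(1/n)$, and the main term converges via the uniform Riemann estimate you sketch. What the paper's approach buys is brevity and reusability; what yours buys is self-containment (no need to unpack what Resnick's proposition says) at the cost of carrying out the bookkeeping you defer. One small remark, not a flaw of yours: the constant in your displayed limit inherits the constant in the paper's intensity $p=\dd x\otimes c_\tau\gamma\,y^{-(\gamma+1)}\dd y$, but a direct computation from~\eqref{eq:tail.ass} gives $\bbP(T_1>m)\sim(c_\tau/\gamma)m^{-\gamma}$ and hence $n\,\bbE[g(x,T_1/n^{1/\gamma})]\to\int g(x,y)\,c_\tau\,y^{-(\gamma+1)}\,\dd y$ without the extra factor of $\gamma$; this is a constant-normalization mismatch between~\eqref{eq:tail.ass} and~\eqref{eq:def_p} that is already present in the paper (and in its one-line proof, which states the limit $c_\tau/y^\gamma$ rather than $c_\tau/(\gamma y^\gamma)$), so it does not reflect a gap in your argument.
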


\begin{proof}
The proof follows from \cite[Proposition~3.21 p.~154]{Res} once noticed that for all $y>0$, the sequence $(n \bbP(T_1>y n^{1/\gamma}))_{n\geq 1}$ converges to $c_\tau / y^\gamma$ when $n$ goes to infinity.
\end{proof}

We define for any $\lambda>0$,
\beq
\label{eq:psi0}
\begin{array}{ccccc}
 \psi^\lambda & : & E & \to & \R^+ \\
 & &  (x,y) & \mapsto & \lambda x + \frac{\pi^2}{2 y^2}, \\
\end{array}
\eeq
and for any $\mu$ in $M_p(E)$,
\begin{equation}
\label{eq:psi}
\Psi^\lambda (\mu):=\inf_{(x,y)\in \mu} \psi^\lambda(x,y).
\end{equation}

We can now define the variable that appears as the limit law in Theorem \ref{thm0},
\beq
F := \Psi^{\gl(\gb)}(\Pi) = \inf_{(x,y) \in \Pi} \Big\{\gl(\gb)x + \frac{\pi^2}{2y^2}\Big\},
\eeq
where we remind that $(x,y) \in \Pi$ means that $(x,y)$ is in the support of $\Pi$.
We shall write $F^\beta$ instead of $F$ when we want to stress the dependence on this parameter.

\par The rest of the section is devoted to various results relative to the environment $\Pi$.
\begin{proposition}
\label{prop:unique}
For all $\lambda>0$, $\Psi^{\lambda}(\Pi)$ is almost surely well defined and positive. Moreover the infimum in the definition of $\Psi^{\lambda}$ is almost surely achieved at a unique point $(x^*,y^*)$ in $E$.
\end{proposition}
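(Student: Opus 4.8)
The plan is to work at the level of the sublevel sets of $\psi^\lambda$. For $t>0$ put
\[
A_t := \{(x,y)\in E : \psi^\lambda(x,y) < t\} = \Big\{(x,y) : \lambda x + \tfrac{\pi^2}{2y^2} < t\Big\},
\]
so that $\{\Psi^\lambda(\Pi)<t\} = \{\Pi(A_t)\ge 1\}$. The first step is to compute $p(A_t)$: the constraint forces $y>\pi/\sqrt{2t}$, and for such $y$ the variable $x$ ranges over $[0,\lambda^{-1}(t-\pi^2/(2y^2)))$, so an elementary integration of the intensity in \eqref{eq:def_p} gives $p(A_t) = \frac{2^{1+\gamma/2} c_\tau}{\lambda\,\pi^\gamma(\gamma+2)}\, t^{(\gamma+2)/2}$. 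In particular $p(A_t)<\infty$ for every $t>0$, $p(A_t)\to 0$ as $t\downarrow 0$, and $p(A_t)\to\infty$ as $t\uparrow\infty$. Since the total mass of $p$ is infinite (the density blows up as $y\downarrow 0$), $\Pi$ is a.s.\ an infinite configuration, so $\Psi^\lambda(\Pi)$ is an infimum over a non-empty set; it is finite because $\psi^\lambda$ is finite at every point of $E$, and it is $\ge 0$ because $\psi^\lambda>0$ on $E$. This already gives that $\Psi^\lambda(\Pi)$ is a.s.\ well defined and takes values in $[0,\infty)$.

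Next I would deduce attainment of the infimum and strict positivity from the two properties $p(A_t)<\infty$ and $p(A_t)\to 0$. Since $p(A_t)<\infty$ we have $\Pi(A_t)<\infty$ a.s., and taking $t\uparrow\infty$ along the integers we see that a.s., for all $t$ large enough, $\Pi\cap A_t$ is a non-empty finite set and $\Psi^\lambda(\Pi)=\min_{(x,y)\in\Pi\cap A_t}\psi^\lambda(x,y)$ is a minimum over finitely many points, hence attained. For positivity, $\bbP(\Psi^\lambda(\Pi)<t)=\bbP(\Pi(A_t)\ge 1)=1-e^{-p(A_t)}\to 0$ as $t\downarrow 0$, so $\bbP(\Psi^\lambda(\Pi)=0)\le\inf_{n\in\bbN}\bbP(\Psi^\lambda(\Pi)<1/n)=0$.

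It remains to establish uniqueness of the minimiser. Because every point of $\Pi$ has finite $\psi^\lambda$-value, it suffices to show that a.s.\ no two distinct points of $\Pi$ lie on a common level curve $L_c := \{(x,y)\in E : \psi^\lambda(x,y)=c\}$; then the value $\Psi^\lambda(\Pi)$ is necessarily attained at a single point $(x^*,y^*)$. Fix $t\in\bbN$ and condition on $\Pi(A_t)=m$: the $m$ points of $\Pi$ in $A_t$ are i.i.d.\ with common law $p(\,\cdot\cap A_t)/p(A_t)$, which is absolutely continuous with respect to two-dimensional Lebesgue measure on $A_t$. Since $\nabla\psi^\lambda=(\lambda,-\pi^2/y^3)$ never vanishes on $E$, each $L_c$ is a smooth curve and hence Lebesgue-null, so it has zero mass under this law; by Fubini the probability that two of these i.i.d.\ points share the same $\psi^\lambda$-value is $0$, and a union bound over the at most $\binom{m}{2}$ pairs, followed by a union bound over $t\in\bbN$ (which exhausts all pairs of points of $\Pi$, each pair lying in some $A_t$), yields the claim. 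Equivalently, one may observe that $\psi^\lambda$ pushes $\Pi$ forward to a Poisson process on $(0,\infty)$ whose intensity $\nu$, characterised by $\nu((0,t])=p(A_t)$, is absolutely continuous and therefore non-atomic, so the image process is a.s.\ simple; all three conclusions of the proposition can be read off from this single description.

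None of the steps is genuinely difficult. The only point requiring some care is the uniqueness argument, where one must first localise to the finite configurations $\Pi\cap A_t$ before invoking the Fubini/absolute-continuity estimate, since a priori $\Pi$ has infinitely many pairs of points; the computation of $p(A_t)$ is routine but is the quantitative input underlying both attainment and positivity.
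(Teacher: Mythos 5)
Your proof is correct, and the heart of it (conditional on $\Pi(A_t)=m$ the points are i.i.d.\ with a law absolutely continuous with respect to Lebesgue measure, hence no two share a $\psi^\lambda$-level) is exactly the mechanism used in the paper. Where you differ is mainly in organisation and in how much you compute explicitly. The paper proves positivity by a soft argument: it bounds $\psi^\lambda$ below on $\Pi$ using $\bar Y$, the largest second coordinate among the a.s.\ finitely many points of $\Pi$ in $[0,1]\times[1,\infty)$, giving $\Psi^\lambda(\Pi)\ge\min\{\pi^2/(2\bar Y^2),\lambda\}>0$ without computing $p(A_t)$. It proves attainment and uniqueness by a common limiting trick: for each of the events $C=\{$infimum attained$\}$ and $D_u^c=\{$no two points of $\Pi\cap A_u^\lambda$ share a level$\}$, one observes that $\bbP(\Psi^\lambda(\Pi)<u)$ equals the probability of that event intersected with $\{\Psi^\lambda(\Pi)<u\}$, and then sends $u\to\infty$ to squeeze the probability of $C$ (resp.\ $D^c$) to $1$. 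You instead compute $p(A_t)$ in closed form and argue directly that for $\omega$-dependent large $t$ the infimum reduces to a minimum over the finite nonempty set $\Pi\cap A_t$; this is slightly more concrete and avoids the (admittedly short) squeeze argument, at the cost of the explicit integral. Your closing remark, that $\psi^\lambda$ pushes $\Pi$ forward to a simple Poisson process on $(0,\infty)$ with non-atomic intensity $\nu$ with $\nu((0,t])=p(A_t)<\infty$, is a genuinely cleaner formulation not used in the paper; it packages well-definedness, positivity, attainment, and uniqueness into the single statement that a Poisson process on $(0,\infty)$ with locally finite, diffuse, infinite intensity a.s.\ has a unique, strictly positive minimum, and is worth keeping even as a remark.
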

\begin{proof}[Proof of Proposition~\ref{prop:unique}]
\par As there is almost surely at least one point in $\Pi$, the infimum is well defined. Moreover, there are almost surely finitely many points in $[0,1]\times [1,+\8)$, which implies that $\bar Y$, the maximum of the second coordinates among these points, is almost surely finite. Thus, $\psi^\lambda(x,y)\geq \min\{{\pi^2}/{(2\bar Y^2)}, \lambda\}$ for any $(x,y)\in \Pi$, which proves that $\Psi^{\lambda}(\Pi)$ is almost surely positive.
\par For $\lambda >0$ and $u>0$, we introduce the set
\beq
\label{eq:defA}
A^\lambda_{u}:=\{(x,y)\in E \textrm{ such that }\psi^{\lambda}(x,y)\leq u\},
\eeq
and note that $\Pi (A^\lambda_{u})<+\8$ almost surely.
\par Let us denote by $C$ the event that the minimum in the definition of $\Psi$ is achieved. We observe that for any $u>0$, 
\beq
P(\Psi(\Pi)<u)=P(\Psi(\Pi)<u, \Pi(A_u^\lambda)<+\8)\leq P(\Psi(\Pi)<u, \Pi \in C).
\eeq
As $\lim_{u\to +\8}P(\Psi(\Pi)<u)=1$ and $\lim_{u \to +\8}P(\Psi(\Pi)<u, \Pi \in C)=P(\Pi \in C)$, we obtain that $P(\Pi \in C)=1$.
\par It remains to prove that the infimum is almost surely achieved at only one point.
For $u>0$, let $D_{u}$ be the event that two points of $\Pi \cap A_u^\lambda$ have the same image by $\psi^{\lambda}$ and $D$ be the event that two points of $\Pi$ have the same image by $\psi^{\lambda}$.
For $n\in\N$ and conditional on $\{\Pi(A_u^\lambda)=n\}$, the restriction of $\Pi$ to $A_u^\lambda$ has the same law as $\sum_{i=1}^n \delta_{X_i}$, where the $(X_i)_{1\le i\leq n}$'s are i.i.d. with continuous law $p$ restricted to $A_u^\lambda$ and renormalised to a probability measure. This implies that $P(\Pi\in D_u | \Pi(A_u^\lambda))=0$ and thus $P(\Pi\in D_u)=0$. We obtain
\beq
P(\Psi(\Pi)<u) = P(\Psi(\Pi)<u, \Pi \in D_u^c).
\eeq
The first term converges to $1$ while the second one converges to $P(\Pi \in D)$ when $u$ goes to infinity. This proves $P(\Pi \in D)=1$ and concludes the proof. 
\end{proof}

\par Unfortunately, it is difficult to work directly with $\Psi^\lambda$ as it appears not to be a vaguely continuous function on $M_p(E)$. For this reason, we introduce the function $\Psi^{\lambda}_K$, for any compact set $K\subset E$, defined by
\begin{equation*}
\Psi^\lambda_K (\mu):=\inf_{(x,y)\in \mu \cap K} \psi^\lambda(x,y),\qquad \mu\in M_p(E).
\end{equation*}
The advantage of restricting the infimum to a compact set lies in the following lemma. 
\begin{lemma}\label{prop:continuity}
For any compact set $K\subset E$ and $\lambda>0$, the  function $\Psi^\lambda_K$ is vaguely continuous on $M_p(E)$.
\end{lemma}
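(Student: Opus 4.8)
The plan is to fix an arbitrary vaguely convergent sequence $\mu_n \stackrel{v}{\to} \mu$ in $M_p(E)$ and to prove $\Psi^\lambda_K(\mu_n) \to \Psi^\lambda_K(\mu)$ in $[0,+\8]$ by establishing the two one‑sided bounds $\liminf_n \Psi^\lambda_K(\mu_n) \ge \Psi^\lambda_K(\mu)$ and $\limsup_n \Psi^\lambda_K(\mu_n) \le \Psi^\lambda_K(\mu)$ separately. The two inputs I would use are: the continuity of $\psi^\lambda$ on $E$ together with the fact that $\nu(K) < +\8$ for every $\nu \in M_p(E)$ (so $\nu \cap K$ is a finite set, the infimum in $\Psi^\lambda_K(\nu)$ is attained when $\nu\cap K\ne\emptyset$ and equals $+\8$ otherwise); and the standard consequence of vague convergence that $\mu_n(B) \to \mu(B)$ for every relatively compact Borel set $B$ with $\mu(\partial B)=0$, which in particular forces the atoms of $\mu_n$ lying in a fixed neighbourhood of a given point of $\mu$ to converge to that point (see~\cite{Res}).

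For the lower bound, set $L:=\liminf_n\Psi^\lambda_K(\mu_n)$ and assume $L<+\8$ (otherwise there is nothing to do). Along a subsequence $(n_k)$ realising $L$, I would pick, for each large $k$, a minimizing atom $z_{n_k}\in\mu_{n_k}\cap K$, i.e.\ $\psi^\lambda(z_{n_k})=\Psi^\lambda_K(\mu_{n_k})$; by compactness of $K$ a further subsequence satisfies $z_{n_k}\to z^\infty\in K$, and continuity gives $\psi^\lambda(z^\infty)=L$. It then remains to check that $z^\infty$ is a point of $\mu$: if it were not, some ball $B$ around $z^\infty$ with $\mu(\partial B)=0$ would satisfy $\mu(B)=0$, hence $\mu_{n_k}(B)\to 0$, contradicting $\mu_{n_k}(B)\ge 1$ for $k$ large (as $z_{n_k}\to z^\infty$). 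Since $z^\infty$ lies in the compact $K\subset E$, only finitely many atoms of $\mu$ sit near it, so $z^\infty$ is genuinely an atom; thus $z^\infty\in\mu\cap K$ and $\Psi^\lambda_K(\mu)\le\psi^\lambda(z^\infty)=L$.

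For the upper bound, fix $\gep>0$. If $\mu\cap K=\emptyset$ there is nothing to prove, and moreover then $\mu(K)=0$, hence $\mu(\partial K)=0$, so $\mu_n(K)\to 0$ and $\Psi^\lambda_K(\mu_n)=+\8$ for $n$ large; otherwise pick an atom $z^*\in\mu\cap K$ with $\psi^\lambda(z^*)<\Psi^\lambda_K(\mu)+\gep$. The one genuinely delicate point enters here: I need $z^*\in\mathrm{int}(K)$, which holds as soon as $\mu$ does not charge $\partial K$. Granting that, I would choose a ball $B=B(z^*,r)$ with $\overline B\subset\mathrm{int}(K)$, with $\psi^\lambda<\psi^\lambda(z^*)+\gep$ on $\overline B$ and $\mu(\partial B)=0$; then $\mu_n(B)\to\mu(B)\ge 1$, so for $n$ large $\mu_n$ has an atom $z_n\in B\subset K$, whence $\Psi^\lambda_K(\mu_n)\le\psi^\lambda(z_n)<\Psi^\lambda_K(\mu)+2\gep$. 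Letting $n\to\8$ and then $\gep\to 0$ yields the bound, and together with the lower bound this proves the claim. The main obstacle is exactly the behaviour on $\partial K$: without $\mu(\partial K)=0$ the upper bound genuinely fails, because an atom of $\mu$ sitting on $\partial K$ can be pushed just outside $K$ by an arbitrarily small vague perturbation, so that $\Psi^\lambda_K$ no longer detects it. This is harmless in the sequel, where $K$ is a closed rectangle and the relevant measures (the limiting Poisson process $\Pi$, whose intensity is absolutely continuous, and $\bbP$-almost every realisation of it) do not charge $\partial K$; accordingly the statement should be understood as continuity of $\Psi^\lambda_K$ at every $\mu$ with $\mu(\partial K)=0$, which is all that is used afterwards.
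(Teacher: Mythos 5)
Your argument is correct and, importantly, it diagnoses a genuine subtlety that the paper's own proof glosses over. The paper invokes \cite[Proposition~3.13]{Res} to obtain, for $n$ large, a labelling $(x^n_i,y^n_i)_{1\le i\le k}$ of $\mu_n\cap K$ converging point-by-point to $\mu\cap K$; but that proposition is only valid for compact $K$ with $\mu(\partial K)=0$, a hypothesis that is silently assumed. As stated, the lemma ("$\Psi^\lambda_K$ is vaguely continuous on $M_p(E)$") is in fact false: your boundary counterexample — an atom of $\mu$ sitting on $\partial K$ that is expelled from $K$ by an arbitrarily small vague perturbation — breaks the upper bound, and the same example also breaks the paper's appeal to Resnick's labelling result. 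What you prove is the corrected statement, continuity of $\Psi^\lambda_K$ at every $\mu$ with $\mu(\partial K)=0$, and you correctly observe that this suffices where the lemma is used (namely \eqref{eq:ubGtilde} and its analogue in the lower-bound step), since the limiting Poisson process $\Pi$ has absolutely continuous intensity $p$ and hence $\Pi(\partial K)=0$ almost surely for the fixed rectangle $K$ used there. Methodologically, you replace the citation of Resnick's labelling proposition by a direct two-sided argument: a compactness/Portmanteau argument for the $\liminf$ (which, as you implicitly use, holds even without the boundary condition, since $K$ closed forces the limit of minimizers into $K$), and a small-ball/Portmanteau argument for the $\limsup$ (which is where $\mu(\partial K)=0$ enters). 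This is a bit longer but more elementary and self-contained, and it has the virtue of surfacing exactly where the boundary condition is needed. The one thing I would tighten is the phrasing "only finitely many atoms of $\mu$ sit near it, so $z^\infty$ is genuinely an atom": spell out that by Radon-ness and local finiteness of $\mu$ you may pick shrinking balls $B_j\downarrow\{z^\infty\}$ with $\mu(\partial B_j)=0$, each forced to carry mass $\ge 1$, so $\mu(\{z^\infty\})\ge 1$.
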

\begin{proof}
Consider a sequence $(\mu_n)_{n\geq 1}$ in $M_p(E)$ that converges vaguely to $\mu$. Due to Proposition~$3.14$ in \cite{Res}, $\mu \in M_p(E)$. Suppose $\mu(K)=k$, that is $\mu(\cdot \cap K)$ writes $\sum_{i=1}^k \delta _{(x_i,y_i)}$ where $(x_i,y_i)_{1 \leq i \leq k}$ is a family in $K$. By Proposition~$3.13$ in \cite{Res} there exists for all $n$ larger than some $n(K)$ a family $(x^n_i,y^n_i)_{1 \leq i \leq k}$ such that $\mu_n(\cdot \cap K)=\sum_{i=1}^k \delta _{(x^n_i,y^n_i)}$. Moreover, for all $1\leq i \leq k$, the sequence
$(x^n_i,y^n_i)_{n\geq 1}$ converges to $(x_i,y_i)$ as $n$ goes to infinity. This implies that $\Psi^{\lambda}_K(\mu_n)$ converges to  $\Psi^{\lambda}_K(\mu)$.
\end{proof}

\par We conclude this section with the following technical lemma: 
\begin{lemma}
\label{lem:Fgep}
The family $(F^{\beta-\epsilon})_{\gep \geq 0}$ (seen as functions of the random measure $\Pi$) converges non-decreasingly to $F^{\beta}$ when $\epsilon\to 0$, almost surely.
\end{lemma}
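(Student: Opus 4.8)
The plan is to write $F^{\beta-\epsilon}=\Psi^{\lambda(\beta-\epsilon)}(\Pi)$ as the composition of the deterministic map $\epsilon\mapsto\lambda(\beta-\epsilon)$ with the random map $\lambda\mapsto\Psi^{\lambda}(\Pi)$, and to check that each of these is monotone and continuous. Concretely, I would show that, $\bbP$-a.s., the map $\lambda\mapsto\Psi^{\lambda}(\Pi)$ is non-decreasing and continuous on $(0,\infty)$, and that $\beta\mapsto\lambda(\beta)$ is non-decreasing and continuous on $\R_+^*$; then $\epsilon\mapsto\lambda(\beta-\epsilon)$ increases to $\lambda(\beta)$ as $\epsilon\downarrow 0$, whence $\epsilon\mapsto F^{\beta-\epsilon}$ increases to $\Psi^{\lambda(\beta)}(\Pi)=F^{\beta}$, which is exactly the claim.

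For the random map, I would fix a realisation of $\Pi$ possessing at least one point (this holds $\bbP$-a.s., cf.\ the proof of Proposition~\ref{prop:unique}), and write $\Pi=\sum_i\delta_{(x_i,y_i)}$. For each $i$ the function $\lambda\mapsto\psi^{\lambda}(x_i,y_i)=\lambda x_i+\tfrac{\pi^2}{2y_i^2}$ is affine and, since $x_i\ge 0$, non-decreasing on $[0,\infty)$. Hence $\Psi^{\lambda}(\Pi)=\inf_i\psi^{\lambda}(x_i,y_i)$ is non-decreasing in $\lambda$, and, being an infimum of affine functions, it is concave in $\lambda$; moreover $0\le\Psi^{\lambda}(\Pi)\le\psi^{\lambda}(x_1,y_1)<\infty$ for every $\lambda\ge 0$. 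A finite concave function on an open half-line is automatically continuous there, which yields the continuity of $\lambda\mapsto\Psi^{\lambda}(\Pi)$ on $(0,\infty)$. If one wishes to avoid this shortcut, the same conclusion can be obtained by localising the infimum: for $\epsilon$ ranging over a fixed compact neighbourhood of $0$, every minimiser of $\psi^{\lambda(\beta-\epsilon)}$ over $\Pi$ lies in a region of $E$ containing $\bbP$-a.s.\ only finitely many points of $\Pi$, exactly as in the proof of Proposition~\ref{prop:unique}, and the minimum of finitely many affine functions depends continuously on the parameter — this is essentially the role of the auxiliary functions $\Psi^{\lambda}_K$.

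For the deterministic map, continuity of $\beta\mapsto\lambda(\beta)$ on $\R_+^*$ is Proposition~\ref{lem:continuity_lambda}, and $\lambda(\beta)\ge\beta>0$ by Proposition~\ref{pr:lambda.beta}. Monotonicity follows from the representation used in the proof of Proposition~\ref{lem:continuity_lambda}: in
\[
\lambda(\ell,\beta)=-\tfrac1\ell\log\bE\Big(\exp\big(-\beta\,\textstyle\sum_{k=1}^{H_{\tau_\ell}}\ind_{\{S_k\in\tau\}}\big)\,\ind_{\{H_{\tau_\ell}<H_0\}}\Big)
\]
the integrand is non-increasing in $\beta$ (the exponent is a non-negative random variable), so $\beta\mapsto\lambda(\ell,\beta)$ is non-decreasing for each $\ell$, and hence so is its limit $\lambda(\beta)$.

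Finally I would assemble the pieces. For $0\le\epsilon'\le\epsilon<\beta$ one has $\lambda(\beta-\epsilon)\le\lambda(\beta-\epsilon')$, hence $\psi^{\lambda(\beta-\epsilon)}\le\psi^{\lambda(\beta-\epsilon')}$ pointwise on $E$ (again using $x\ge 0$), hence $F^{\beta-\epsilon}\le F^{\beta-\epsilon'}$; thus $\epsilon\mapsto F^{\beta-\epsilon}$ is non-increasing, i.e.\ non-decreasing as $\epsilon\downarrow 0$. Since $\lambda(\beta-\epsilon)\to\lambda(\beta)$ as $\epsilon\to 0$ while $\lambda\mapsto\Psi^{\lambda}(\Pi)$ is continuous at $\lambda(\beta)\in(0,\infty)$, we get $F^{\beta-\epsilon}=\Psi^{\lambda(\beta-\epsilon)}(\Pi)\to\Psi^{\lambda(\beta)}(\Pi)=F^{\beta}$, $\bbP$-a.s. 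I do not expect a real obstacle here; the only points needing a little care are the $\bbP$-a.s.\ finiteness of $\Psi^{\lambda}(\Pi)$ (which only uses $\Pi\neq\eset$ a.s.) and the monotonicity of $\beta\mapsto\lambda(\beta)$, both routine.
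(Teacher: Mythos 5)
Your proof is correct, but it takes a genuinely different route from the paper's. The paper's proof goes through the unique minimizer $(X^*(\beta),Y^*(\beta))$ from Proposition~\ref{prop:unique}: it first shows that the argmin of $\psi^{\lambda(\beta-\epsilon)}$ over $\Pi$ stabilises at $(X^*(\beta),Y^*(\beta))$ for all sufficiently small $\epsilon$ (by confining the argmin to a region $\{x<x_0,\,y>y_0\}$ containing a.s.\ finitely many points and then using continuity of $\lambda$ to propagate the strict inequalities), which yields the exact identity $\Psi^{\lambda(\beta-\epsilon)}(\Pi)-\Psi^{\lambda(\beta)}(\Pi)=(\lambda(\beta-\epsilon)-\lambda(\beta))\,X^*(\beta)$ and hence a Lipschitz-type bound. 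You instead observe that for fixed $\Pi$ the map $\lambda\mapsto\Psi^\lambda(\Pi)$ is an infimum of affine functions $\lambda\mapsto\lambda x+\tfrac{\pi^2}{2y^2}$ with $x\ge 0$, hence finite, non-decreasing and concave on $[0,\infty)$, and therefore continuous on the open half-line; composing with the continuous, non-decreasing $\beta\mapsto\lambda(\beta)$ (continuity is Proposition~\ref{lem:continuity_lambda}, monotonicity follows from the integral representation in its proof, and positivity from $\lambda(\beta)\ge\beta$ in Proposition~\ref{pr:lambda.beta}) gives the result immediately. Your argument is shorter and more elementary --- in particular it does not need the uniqueness of the minimizer (Proposition~\ref{prop:unique}) nor the auxiliary functions $\Psi^\lambda_K$. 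What the paper's version buys in exchange is quantitatively sharper information: the eventual stabilisation of the random minimizer and the explicit Lipschitz control $|\Psi^{\lambda(\beta-\epsilon)}(\Pi)-\Psi^{\lambda(\beta)}(\Pi)|\le|\lambda(\beta-\epsilon)-\lambda(\beta)|\,X^*(\beta)$, neither of which is required for this lemma but may be of independent interest. Both proofs rely on the monotonicity of $\beta\mapsto\lambda(\beta)$, which the paper asserts without justification; your derivation from the representation of $\lambda(\ell,\beta)$ is a worthwhile addition.
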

\begin{proof}
Using Proposition \ref{prop:unique} we can define almost surely a random point $(X^*(\beta),Y^*(\beta))$ such that
\beq
\psi^{\lambda(\beta)}(X^*(\beta),Y^*(\beta))=\Psi^{\lambda(\beta)}(\Pi).
\eeq
We first prove that $(X^*(\beta-\epsilon),Y^*(\beta-\epsilon))= (X^*(\beta),Y^*(\beta))$ almost surely if $\epsilon>0$ is chosen small enough. Let $\gep_0\in(0,\gb)$.
Consider some $x\geq x_0:=\frac{2\Psi^{\lambda(\beta)}(\Pi)}{\lambda(\beta-\epsilon_0)}$ and any $y>0$. As $\lambda(\cdot)$ and  $\Psi^{\lambda(\cdot)}$ are non-decreasing functions of $\beta$ it holds that 
\beq
\psi^{\lambda(\beta-\epsilon)}(x,y) \geq \lambda(\beta-\epsilon) x \geq 2\Psi^{\lambda(\beta-\epsilon)}(\Pi)
\eeq
for all $\epsilon<\epsilon_0$, and we conclude that $X^*(\beta-\epsilon)<x_0$ for any $\epsilon<\epsilon_0$.

\par Consider now some $y\leq y_0:= \frac{\pi}{2\sqrt{\Psi^{\lambda(\beta)}(\Pi)}}$ and any $x>0$. 
We thus obtain 
\beq
\psi^{\lambda(\beta-\epsilon)}(x,y) \geq \frac{\pi^2}{2 y^2} \geq  2\Psi^{\lambda(\beta-\epsilon)}(\Pi),
\eeq
and we conclude that $Y^*(\beta-\epsilon)>y_0$ for any $\epsilon<\epsilon_0$.
We deduce thereof that almost surely
\beq 
F^{\beta-\epsilon}= \inf_{(x,y)\in \Pi \cap \{x<x_0 , y>y_0\}}\psi^{\lambda(\beta-\epsilon)}(x,y).
\eeq

Finally, observe that $\Pi (x<x_0 , y>y_0) <+\8$ almost surely, so that there are only finitely many candidates for $(X^*(\beta-\epsilon),Y^*(\beta-\epsilon))$. Pick $(X,Y)\in   \Pi \cap \{x<x_0 , y>y_0\}$ that is not $ (X^*(\beta),Y^*(\beta))$ (if there is no such point there is nothing more to prove as $(X^*(\beta),Y^*(\beta))$ is then the only candidate). The function $\epsilon \mapsto \lambda(\beta-\epsilon) X + \frac{\pi^2}{2Y^2}$ is (i) strictly larger than $\lambda(\beta) X^*(\beta) + \frac{\pi^2}{2(Y^*(\beta))^2}$ at $\epsilon=0$ due to Proposition \ref{prop:unique} and (ii) continuous due to Proposition \ref{lem:continuity_lambda}. Therefore,
\beq
\lambda(\beta-\gep) X + \frac{\pi^2}{2Y^2}>\lambda(\beta) X^*(\beta) + \frac{\pi^2}{2(Y^*(\beta))^2}
\eeq
for $\epsilon>0$ small enough. As $\Pi( \{x<x_0 , y>y_0\}) <+\8$ almost surely, we can choose $\epsilon$ small enough so that the last inequality holds for all points in $\Pi \cap \{x<x_0 , y>y_0\}$. Therefore, we may from now on consider $\epsilon>0$ small enough so that $(X^*(\beta-\epsilon),Y^*(\beta-\epsilon))= (X^*(\beta),Y^*(\beta))$.

\par From what precedes, we get
\beq
|\Psi^{\lambda(\beta-\epsilon)}(\Pi)-\Psi^{\lambda(\beta)}(\Pi)|\leq |\lambda(\beta-\epsilon)-\lambda(\beta)| X^*(\beta).
\eeq
Again, as $\lambda$ is continuous, $\Psi^{\lambda(\beta-\epsilon)}(\Pi)$ converges almost surely to $\Psi^{\lambda(\beta)}(\Pi)$ when $\epsilon$ goes to $0$. Finally,
\beq
 F^{\beta-\epsilon}  \underset{\epsilon \to 0}{\to}  F^{\beta} \qquad a.s.
\eeq
Moreover, as $\lambda$ is non-decreasing with $\beta$, the convergence is monotone:
\beq
 F^{\beta-\epsilon}  \underset{\epsilon \to 0}{\nearrow}  F^{\beta} \qquad a.s.
\eeq
\end{proof}

\subsection{Statement of the result}
\label{subsec:sor}
From now on we set
\beq
\label{eq:norm}
N = N(n) = n^{\frac{\gamma}{\gamma+2}},\qquad F_n = - \frac{1}{N} \log Z_n,\qquad n\ge1.
\eeq

\par Let us first explain at a heuristic level the choice of this normalization. The argument is of the type one uses to find volume exponents in some polymer models and is sometimes referred to as a {\it Flory argument}. We assume that at a large time $n$ the walk has visited at most $N$ traps and has remained confined in the largest visible gap, and we find the value of $N$ with the best energy-entropy balance. By basic extreme-value theory, the size of that gap is of order $N^{1/\gamma}$, and by a standard small-ball estimate (see Proposition~\ref{pr:small_ball} below for a precise statement) the entropic cost of being confined in that gap during time $n$ is of order $nN^{-2/\gamma}$. Also, the cost of crossing $N$ traps should be roughly of order $N$, see Proposition~\ref{pr:lambda.beta} below for a rigorous proposition. Finally, by equating these two costs, one finds the optimal choice $N= n^{\frac{\gamma}{\gamma +2}}$. As a consequence, the walk has travelled a distance of order $n^{\frac{1\vee \gamma}{\gamma+2}}$ from the origin (see Proposition \ref{thm:limit.stable}). This {\it confinement strategy} will be justified during the proof of our theorem, which we now state:
\begin{theorem}\label{thm0}
The sequence of random variables $(F_n)_{n\ge 1}$ converges in $\bbP$-distribution to the random variable
\beq
\label{eq:th}
F := \Psi^{\gl(\gb)}(\Pi) = \inf_{(x,y) \in \Pi} \Big\{\gl(\gb)x + \frac{\pi^2}{2y^2}\Big\},
\eeq
where $\Pi$ is a Poisson point process on $\bbR^+\times\bbR^+_*$ with intensity $p =\dd x \otimes c_{\tau} \gamma \  y^{-(1+\gamma)}\dd y$. 
\end{theorem}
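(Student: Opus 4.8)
The plan is to sandwich $F_n=-\tfrac1N\log Z_n$ between an upper and a lower bound, both of which are functionals of the rescaled environment $\Pi_N$ (the point process of Section~\ref{subsec:coe} taken at index $N=N(n)\to\infty$), and then to conclude from the weak convergence $\Pi_N\to\Pi$ of Proposition~\ref{prop:convergenceP}. The arithmetic that makes this work is the following: if the walk reaches $\tau_\ell$ and then stays in the $\ell$-th gap until time $n$, the two phases cost, in units of $N$, respectively $\lambda(\ell,\gb)(\ell-1)/N$ and $\tfrac{\pi^2}{2}\,n/(N T_\ell^2)$; since $n=N^{(\gamma+2)/\gamma}$ the second term equals $\tfrac{\pi^2}{2}\big(T_\ell/N^{1/\gamma}\big)^{-2}$, and by Proposition~\ref{pr:lambda.beta} the first is $\lambda(\gb)(\ell-1)/N+o(1)$ whenever $\ell\to\infty$, so these are exactly the two terms of $\psi^{\lambda(\gb)}$ evaluated at the rescaled point $\big((\ell-1)/N,\ T_\ell/N^{1/\gamma}\big)$.

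For the lower bound on $Z_n$ I would fix a compact $K\subset E$, let $\ell_n$ be an index attaining $\Psi^{\lambda(\gb)}_K(\Pi_N)$, and restrict the expectation defining $Z_n$ to trajectories that (i) reach $\tau_{\ell_n-1}$ before dying or hitting $\Z^-$, which by~\eqref{def:gl.ell.gb} has probability $e^{-\lambda(\ell_n-1,\gb)(\ell_n-1)}$, and then (ii) move into the $\ell_n$-th gap and stay in $(\tau_{\ell_n-1},\tau_{\ell_n})$ until time $n$, which by the small-ball estimate (Proposition~\ref{pr:small_ball}) has probability $e^{-\tfrac{\pi^2 n}{2T_{\ell_n}^2}(1+o(1))}$. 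The Markov property then gives $F_n\le\Psi^{\lambda(\gb)}_K(\Pi_N)+o(1)$, the $o(1)$ swallowing the convergence $\lambda(\ell,\gb)\to\lambda(\gb)$ (one checks separately, via the bound $\lambda(\ell,\gb)\ell\le(\gb+\log 2)\ell+\sum_{i\le\ell}\log T_i$ from the proof of Proposition~\ref{pr:lambda.beta}, that bounded $\ell_n$ contribute only $O(\log N)/N$). Since $\Psi^{\lambda(\gb)}_K$ is vaguely continuous (Lemma~\ref{prop:continuity}), $\Psi^{\lambda(\gb)}_K(\Pi_N)\to\Psi^{\lambda(\gb)}_K(\Pi)$ in distribution, and letting $K\uparrow E$ (so that $\Psi^{\lambda(\gb)}_K(\Pi)\downarrow F$) yields $\limsup_n F_n\le F$ in $\bbP$-distribution.

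For the matching bound I would decompose according to $L_n:=\max\{\ell\ge 0:\ \tau_\ell\text{ is visited before time }n\}$; on the survival event the walk then stays in $[1,\tau_{L_n+1})$ throughout $[0,n]$, so
\[
Z_n=\sum_{\ell\ge 0}\bE\Big[e^{-\gb\#\{k\le n:\, S_k\in\tau\}}\ind_{\{L_n=\ell,\ S_k\in[1,\tau_{\ell+1})\ \forall k\le n\}}\Big].
\]
Bounding the $\ell$-th term by the Markov property at $H_{\tau_\ell}$, it is at most $e^{-\lambda(\ell,\gb)\ell}$ (cost of reaching $\tau_\ell$, by~\eqref{def:gl.ell.gb}) times the partition function of a walk confined between the soft interfaces $\tau_1<\dots<\tau_\ell$ inside $[1,\tau_{\ell+1})$ for time $\le n$. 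Because all the gaps at play are of order $N^{1/\gamma}$, using a second large gap would require of order $n/N^{1/\gamma}\gg N$ interface crossings, hence a prohibitive $\gb$-penalty, so the confined walk does essentially best by sitting in the single largest gap $G_\ell$ and this \emph{confined} partition function is $e^{-\tfrac{\pi^2 n}{2G_\ell^2}(1+o(1))}$; obtaining this sharp two-sided estimate is where I would revisit and extend the estimates of Caravenna and P\'etr\'elis~\cite{CP09b} for periodically spaced interfaces. Absorbing into a small $\gep>0$ both the gap between $\lambda(\ell,\gb)$ and $\lambda(\gb)-\gep$ on the finitely many small indices and the confinement errors, each term is $\le\exp\!\big(-N(\Psi^{\lambda(\gb)-\gep}(\Pi_N)-o(1))\big)$; only $O(N)$ indices contribute non-negligibly, so $F_n\ge\Psi^{\lambda(\gb)-\gep}(\Pi_N)-o(1)$. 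Passing to the limit as before (Lemma~\ref{prop:continuity}, then $K\uparrow E$), then letting $\gep\downarrow 0$ via Lemma~\ref{lem:Fgep} (which gives $F^{\gb-\gep}\nearrow F^\gb$), yields $\liminf_n F_n\ge F$ in $\bbP$-distribution.

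Combining the two one-sided bounds in a Portmanteau-type squeeze gives $F_n\to F$ in $\bbP$-distribution, with Proposition~\ref{prop:unique} ensuring $F$ is a genuine (a.s.\ finite, a.s.\ uniquely attained) random variable and supplying the needed continuity points, while the tightness of $\Pi_N$ away from the axes — so that $\Psi^{\lambda}_K(\Pi_N)=\Psi^{\lambda}(\Pi_N)$ with probability tending to $1$ as $K\uparrow E$ — follows from standard tail estimates, e.g.\ from $N\,\bbP(T_1>Y\,N^{1/\gamma})\to c_\tau Y^{-\gamma}$. The genuine obstacle is the upper bound, and within it the \emph{simultaneous} extraction of the sharp constant $\pi^2/2$ in the confinement cost and of the correct per-trap crossing cost $\lambda(\gb)$ (rather than the trivial $\gb$): these must be controlled jointly, since the walk both navigates entropically through many large gaps and pays the soft-interface penalties, which is precisely where the extended Caravenna--P\'etr\'elis analysis is indispensable. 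I expect the residual loss $\gep$ — hence the reliance on Lemma~\ref{lem:Fgep} — to be difficult to remove within this scheme.
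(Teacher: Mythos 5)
Your lower-bound argument and the overall architecture (sandwich $F_n$ between functionals of $\Pi_N$, invoke Proposition~\ref{prop:convergenceP} and Lemma~\ref{prop:continuity} through compact windows $K$, remove the $\gep$-loss with Lemma~\ref{lem:Fgep}) match the paper's strategy. But the upper bound on $Z_n$ — which you rightly flag as the genuine obstacle — contains a gap that your sketch does not resolve. You write that, by the Markov property at $H_{\tau_\ell}$, the $\ell$-th term of the decomposition is ``at most $e^{-\lambda(\ell,\gb)\ell}$ times the partition function of a walk confined \dots for time $\le n$.'' This product form does \emph{not} follow from the Markov property: decomposing on the value $k$ of $H_{\tau_\ell}$ gives $\sum_{k\le n}\bP(\gs\wedge H_0>H_{\tau_\ell}=k)\,\bP_{\tau_\ell}(\cdots>n-k)$, and a naive bound then produces either $e^{-\lambda\ell}$ (using $\bP_{\tau_\ell}(\cdots)\le 1$, since ``confined for time $\le n$'' is trivially of order one when the remaining budget $n-k$ is small) or $e^{-\phi n}$, but never both factors simultaneously. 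The danger is precisely that the crossing time $H_{\tau_\ell}$ could eat a macroscopic fraction of $n$, so that neither the per-trap cost $\lambda(\gb)$ nor the full confinement cost $g(T)n$ is paid in full. The paper resolves this with a Chebyshev tilting at rate $\phi=\pi^2/(2(T_k^*)^2)$ (display~\eqref{eq:auxZnk3}): it rewrites the contribution of each crossing time as $e^{-\phi m}\,\bE^\gb\big[e^{\phi H_k^*}\ind_{\{\gz_k^*<\gz_0\wedge\cN\}}\big]$ and then controls the tilted exponential moment as a product of ratios $Q_{x,y}(\phi)/Q_{x,y}(0)$ along the trajectory of the auxiliary chain $X$, each ratio being at most $e^{\gep}$ except on a sparse set of ``bad'' edges (Lemma~\ref{lem:control_ratio}) whose multiplicative contribution is absorbed via Lemma~\ref{lem:control_pinbad}. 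That tilting — not an extension of the Caravenna--P\'etr\'elis confinement estimate — is where the joint control and the $\gb\rightsquigarrow\gb-\gep$ loss actually originate; Proposition~\ref{pr:periodic} only furnishes the confinement factor $e^{-\phi(\gb;\cdot)(n-m)}$ for fixed remaining time, and $\phi(\gb;\cdot)\approx g(t_{\max})$ is elementary.

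Two smaller remarks. First, your decomposition runs over all $\ell$ up to the last visited trap; the paper instead decomposes by \emph{records} $k\in R_{\gep_0}(n)$, which keeps the number of terms polylogarithmic (event $A_n^{(8)}$) and makes the ``largest visible gap'' $T_k^*$ canonical — this matters because the tilting rate $\phi$ has to be tied to that specific record gap. Second, your passage from $\Psi_K$ to $\Psi$ via ``$K\uparrow E$'' only works in one direction of the Portmanteau squeeze (since $\Psi_K\ge\Psi$); the paper instead uses the good-environment event $A_n^{(11)}$ to force $\Psi_K(\Pi_N)=\Psi(\Pi_N)$ with probability $1-o(1)$, and conditions the entire analysis on $\Omega_n$ — a device you omit but which is doing real work in controlling all the $o(1)$ error terms uniformly.
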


\par The rest of the article is devoted to the proof of this result. The general strategy is the following. In Section \ref{sec:KT} we introduce the notion of \textit{good environments}, that are environments for which we can provide suitable bounds on the survival probability $Z_n$, see~\eqref{eq:ge}. In Sections~\ref{sec:LB} and \ref{sec:UB} we successively prove lower and upper bounds, assuming only that the environment is good. We complete the proof in Section \ref{sec:pot}: first we show that we can adjust parameters so that environments are asymptotically good, with a probability arbitrarily close to one;  then we use the two bounds obtained in the previous sections to relate the quantity of interest to an explicit functional $\Psi$ of the properly renormalized environment, see \eqref{eq:psi}. Finally we let $n$ tend to infinity to prove that $F$ has the same law as $\Psi$ applied to the limiting environment, that is a Poisson point process (see Proposition \ref{prop:convergenceP}).

\subsection{Comments} 
\label{subsec:c}
We collect here a number of comments and remarks about our result.\\

{\noindent \bf 1.} We do not consider the annealed survival probability, which decreases at most polynomially fast since 
\beq
\bbE\bP(\gs \wedge H_{\Z^-}>n) \ge \frac 12 \bP_1(H_0 \geq n) \bbP(\tau_1 > n) \sim \frac 12  c_\tau\ n^{-\gga-\frac 12},
\eeq
as $n\to \infty$, and thus has a completely different behaviour than the quenched probability.\\

{\noindent \bf 2.} Note that we cannot hope for better than weak convergence. Indeed, if $F_n$ would converge  to $F$ almost-surely, then $F$ would be measurable with respect to the tail $\gs$-algebra of the family $(T_i)_{i\ge 1}$. As the increments are independent, the latter is trivial due to the $0-1$ law, and $F$ would be deterministic.\\

{\noindent \bf 3.} In the case $\gga\le 1$, the variational formula in \eqref{eq:th} admits an alternative representation in terms of a subordinator, which reads
\beq
F = \inf_{t\ge 0} \Big\{\gl(\gb)t + \frac{\pi^2}{2(\gD\cS_t)^2}\Big\},
\eeq
where $(\cS_t)_{t\ge 0}$ is a $\gga$-stable subordinator and $\gD\cS_t = \cS_t - \cS_t^- = \cS_t - \lim_{u\to t^-} \cS_u$.\\

{\noindent \bf 4.} We can compute explicitly the tail distribution function of the limiting law $F = \Psi^{\gl(\gb)}(\Pi)$ in Theorem \ref{thm0}. Recall \eqref{eq:def_p}, \eqref{eq:psi0} and \eqref{eq:defA}. For any $u \ge 0$ (we write $\gl$ instead of $\gl(\gb)$ to lighten notations),
\beq
\ba
\bbP(F \ge u)&=\bbP(\Pi(A^\lambda_{u})=0)=\exp(- p(A^\lambda_{u})).
\ea
\eeq
Since $A^\gl_u = \Big\{(x,y)\colon 0 \le x < u/\gl,\ y\ge \frac{\pi}{\sqrt{ 2(u-\gl x)}} \Big\}$, we get by a straightforward computation that
\beq
\label{eq:Fcont}
\bbP(F \ge u) = \exp\Big( - \frac{c_\tau}{\gl(\gb)\pi^\gamma (\gamma+2)} (2u)^{\frac{\gga}{2}+1} \Big),\qquad u\ge 0.
\eeq

{\noindent \bf 5.} The case $\gamma=0$ is left open. In this case, a gap distribution of the form \eqref{eq:tail.ass} is no longer appropriate and one should instead assume that $\bbP(T_1=n) \sim L(n)/n$, where $L$ is a non-negative slowly varying function such that $\sum L(n)/n$ is finite. Complications may arise at two levels~: (i) the normalization of $\max_{1\le i\le n} T_i$, that we use to guess the value of $N$, and (ii) the integrability of $\log T_1$, that we use in Proposition~\ref{pr:lambda.beta}. For instance, if $L(n) = (\log n)^{-2}$ then $\bbE(\log T_1) = \infty$ and $\max_{1\le i\le n} T_i$ has a completely different type of renormalization since, as one can readily show, $(1/\sqrt{n}) \log \max_{1\le i\le n} T_i$ converges to a non-trivial probability law with cumulative distribution function $x \mapsto \exp(-x^{-2})\ind_{\{x>0\}}$, as $n\to\8$.\\

{\noindent \bf 6.} We state without proof an alternative expression for $\gl(\gb)$ based on ergodic theory considerations. To this end, let $\tilde\tau$ be an independent copy of $\tau$, as defined in Section~\ref{sec:traps}. Suppose that the random walk is now free to visit $\Z^-$ but is killed by the set $-\tilde\tau$ (note the minus sign), with the same probability $1-\exp(-\gb)$, and denote by $\tilde\gs$ the corresponding killing time. Then,
\beq
\label{eq:alt_lambda.beta}
\gl(\gb) = - \bbE \tilde\bbE \log \bP^\gb(H_{\tau_1} < \tilde\gs).
\eeq
Assuming this last equality, we could readily prove using the dominated convergence theorem that $\lambda$ is also continuous at $0$.\\

{\noindent \bf 7.} Equation~\eqref{eq:encadr.lambda} does not give much information about the behaviour of $\lambda(\gb)$ at $0$, that remains an open question. We expect however that $\gb = o(\gl(\gb))$ as $\gb\to 0$ and we now explain why. To this end, recall~\eqref{eq:alt_lambda.beta} and the related notations above. By integrating over $\cN$ and differentiating in $\beta$ we obtain
\beq
\lim_{\beta \to 0} \gl'(\beta) = \tilde\bbE\bbE\bE\Big(\sum_{k=1}^{H_{\tau_1}} \ind_{\{S_k \in-\tilde\tau\}} \Big),
\eeq
that we expect to be infinite. Indeed, by first restricting the walk to make its first step to the left and then using the symmetry of the random walk,
\beq
\tilde\bbE\bbE\bE\Big(\sum_{k=1}^{H_{\tau_1}} \ind_{\{S_k \in-\tilde\tau\}} \Big)
\ge \frac12 \tilde\bbE\bE_{-1}\Big(\sum_{k=1}^{H_0} \ind_{\{S_k \in-\tilde\tau\}} \Big)
= \frac12 \tilde\bbE\bE_1\Big(\sum_{k=1}^{H_0} \ind_{\{S_k \in \tilde\tau\}} \Big).
\eeq
We now interchange integrals and use the renewal theorem to obtain, at least for $\gamma \neq 1$,
\beq
\lim_{\beta \to 0} \gl'(\beta) \ge \frac12 \bE_1\Big(\sum_{k=1}^{H_0} \tilde\bbP (S_k \in \tilde\tau) \Big) 
\ge \frac{C}{2} \bE_1\Big(\sum_{k=1}^{H_0} (1+S_k)^{(\gga-1)\wedge 0} \Big).
\eeq
Since, by Ray-Knight's theorem, the mean number of visits to $x\in\bbN_0$ between time $1$ and $H_0$ equals $1$ under $\bP_1$, we get
\beq
\lim_{\beta \to 0} \gl'(\beta) \ge C \sum_{x\ge 0} (1+x)^{(\gga-1)\wedge 0} = \infty.
\eeq

{\noindent \bf 8.} Note that we offer no path statement. In other words, we do not prove anything about the behaviour of the walk conditioned to survive for a long time $n$. However, as it is often the case with this type of model, our result and the method of proof suggest a path strategy, which in our context corresponds to a confinement (or localization) strategy. To be more precise, we roughly expect that as $n$ is large, the walk reaches the trap labelled $X^*(\beta)N$ and then remains in the corresponding gap, of size $Y^*(\beta)N^{1/\gga}$, where $(X^*(\beta),Y^*(\beta))$ is distributed as the unique minimizer of the random variational problem in Theorem~\ref{thm0}. In other words, the path (or polymer) gets stuck in a slab which is {\it large enough} and {\it not too far from the origin}. Surprisingly, the repulsive interaction between the path and the traps leads to a pinning effect on the quenched path measures, as explained by Sznitman~\cite[Chapter 6]{Sz98}. Proving such a result should demand substantial additional work, as one would most likely need sharper bounds on the survival probability (partition function) and fine controls on the ratio of survival probabilities restricted to suitable events. Nevertheless, this can be considered as an interesting direction of research. Let us mention that Ding and Xu~\cite{DingXu} recently obtained a path confinement result in the case of quenched hard Bernoulli obstacles for $d\ge 2$.\\

{\noindent \bf 9.} Let us stress that the scaling $t^{\gamma/(\gamma+2)}$ that appears in our Theorem \ref{thm0} is different from the scaling of the PAM in a bounded i.i.d. potential. In this case \cite[Example $5.10$]{Ko16} states that the correct scaling is $t$ up to a logarithmic correction. Hence we are in a case where the correlations of the potential have a drastic effect on the asymptotic behaviour of the survival probability.

\section{Key tools}
\label{sec:KT}
In this section we introduce several tools which we will use to prove our result. For convenience, various notations are gathered together in Section~\ref{subsec:not}, to which the read may refer. In Section~\ref{subsec:conf} we remind the reader of the so-called small ball estimate and establish a rough upper bound on the probability that the walker stays confined in a fixed gap until time $n$, see Proposition~\ref{prop:roughUB}. Section~\ref{subsec:di} contains Proposition \ref{prop:FKG}, which states that a walk conditioned to hit a given point $x$ before coming back to $0$ does it faster when it is also conditioned on survival until its first visit to $x$. In Section~\ref{subsec:ttl} we state the two technical Lemmas~\ref{lem:control_ratio} and~\ref{lem:control_pinbad} that we will use in Section~\ref{sec:UB} while proving the upper bound on $Z_n$. Finally we introduce the key notion of \textit{good environment} in Section~\ref{subsec:ge}. Informally, \textit{good environments} are those for which we are able to efficiently bound $Z_n$. We thus give a list of events, see \eqref{eq:ge}, that are essentially the technical conditions we will need in Proposition~\ref{prop:ub} and~\ref{prop:lb}.

\subsection{Notations} \label{subsec:not} Let us introduce notations that will be necessary in what comes next.\\

{\it \noindent Killing clock.} We recall the definition of the $\N$-valued geometric random variable $\cN$ with success parameter $1-e^{-\beta}$ that plays the role of the killing clock.\\

{\it \noindent Records.}  As we already hinted in the heuristics, only the {\it largest} gaps matter. To be more precise, a particular attention is given to record values of the sequence $(T_\ell)$. Therefore, we let
\beq
\label{def:ik}
i(0)=0,\qquad i(k) = \inf\{i>i(k-1)\colon T_{i+1} > T_{i(k-1)+1}\},\qquad k\ge 1,
\eeq
be the sequence of record indexes, while
\beq
\label{eq:def.t.tau.star}
\tau^*_k=\tau_{i(k)} \qquad \textrm{ and } \qquad  T^*_k = T_{i(k)+1},\qquad k\ge 0.
\eeq
We also define
\beq
R(a,b) = \{k \ge 1 \colon a\le i(k) \le b\},\qquad \cR(a,b) = i(R(a,b)),\qquad a,b\in\bbN,\ a<b,
\eeq
and
\beq
\label{eq:def.R.cR}			
R_\gep(n) = R(\gep N, \gep^{-1}N),\qquad \cR_\gep(n) = \cR(\gep N, \gep^{-1}N), \qquad n\in \bbN, \qquad \gep>0.
\eeq
Finally we write 
\beq
\cR=\cR(1,+\8),
\eeq
for the set of all records.\\

{\it \noindent Auxiliary random walk.} We remind that the clock process $(\gt_n)_{n\ge 1}$ is defined by
\beq
\gt_0 = 0,\quad \gt_{n+1} = \inf\{k > \gt_n \colon S_k \in \tau\},\quad n\ge 0.
\eeq
The process that chronologically keeps track of the traps visited by the walk will be denoted by $X = (X_n)_{n\ge 0}$ and is uniquely determined by $\tau_{X_n} = S_{\gt_n}$. 
It is not difficult to see that $X$ is a Markov chain on $\bbN_0$, the hitting times of which are denoted by
\beq
\label{def:zeta}
\gz_x = \inf\{ n\ge 1 \colon X_n = x\},\qquad x\in\bbN_0,
\eeq
and
\beq
\label{def:zeta.star}
\gz^*_k = \inf\{ n\ge 1 \colon X_n = i(k)\},\qquad x\in\bbN_0.
\eeq

\medskip
{\it \noindent Transition matrices and their moment-generating functions.} Let us define
\beq
\label{eq:defqij}
q_{ij}(n) = \bP_{\tau_i}(S_k \notin \tau,\ 1\le k < n,\ S_n = \tau_j),\qquad i,j\in \bbN_0, \quad n\ge 1,
\eeq
and the associated family of matrices $\{Q(\phi)\}_{\phi\ge 0}$ by
\beq
\label{eq:defQij}
Q_{ij}(\phi) = \sum_{n\ge 1} e^{\phi n} q_{ij}(n) = \bE_{\tau_i}\left(e^{\phi \theta_{1}}\ind_{\{S_{\theta_1}=\tau_j\}}\right),\qquad i,j\in \bbN_0, \ \phi \geq 0.
\eeq
Note that the matrix $\{Q_{ij}(0)\}_{i,j\ge 0}$ is nothing but the transition matrix of the Markov chain $X$ defined above. These quantities will appear in Lemma~\ref{lem:control_ratio} below and are zero as soon as $|j-i|>1$. Finally, we will also use the following notations for the gap associated to an non-oriented edge $\{i,j\}$ with $|j-i|\le 1$:
\beq
\label{eq:deftij}
t_{ij} = 
\left\{
\begin{array}{lll}
t_{i+1} & \text{if} & j=i+1, \\
t_i & \text{if} & j = i- 1, \\
t_{i+1} \vee t_i & \text{if} & i=j,
\end{array}
\right.
\eeq
where $(t_i)$ is a sequence of integers.
\medskip
\subsection{Confinement estimates} \label{subsec:conf}
One of the key standard estimates in our arguments are the so-called {\it small-ball estimates}, that control the probability that a simple random walk stays confined in an interval:
\begin{proposition} \label{pr:small_ball}
There exist $t_0,c_1,c_2,c_3,c_4>0$ such that for all $t>t_0$, the following inequalities hold for all $n\ge 1$ such that $n\in 2\bbN$ or $n-t \in 2\bbN$:
\beq
\label{eq:sb}
\frac{c_1}{t \wedge n^{1/2}} e^{-g(t)n} \le \bP(H_t \wedge H_0 \wedge H_{-t} > n) \le \frac{c_2}{t \wedge n^{1/2}} e^{-g(t)n}
\eeq
\beq
\frac{c_3}{t^3 \wedge n^{3/2}} e^{-g(t)n} \le \bP(H_t \wedge H_0 \wedge H_{-t} = n) \le \frac{c_4}{t^3 \wedge n^{3/2}} e^{-g(t)n},
\eeq
where
\beq
\label{eq:g}
g(t) = -\log \cos \big(\frac{\pi}{t}\big) = \frac{\pi^2}{2t^2} + O\big(\frac{1}{t^4}\big),\qquad t\to +\8.
\eeq
\end{proposition}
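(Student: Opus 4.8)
The plan is to reduce the two-sided confinement event to a one-sided one, and then to exploit the explicit spectral decomposition of the simple random walk killed at the two endpoints of an interval. Conditioning on the first step and using the symmetry $S\leftrightarrow -S$, one gets for $t\ge 2$
\beq
\ba
\bP(H_t \wedge H_0 \wedge H_{-t} > n) &= \bP_1(H_0 \wedge H_t > n-1),\\
\bP(H_t \wedge H_0 \wedge H_{-t} = n) &= \bP_1(H_0 \wedge H_t = n-1),
\ea
\eeq
so it suffices to estimate the right-hand sides. Note that the parity hypothesis ``$n\in 2\bbN$ or $n-t\in 2\bbN$'' is precisely the condition under which the event $\{H_t\wedge H_0\wedge H_{-t}=n\}$ is non-empty (the walk reaches $0$ only at even times and $\pm t$ only at times of the parity of $t$), which is of course necessary for the lower bound in the second pair of inequalities; for the ``$>n$'' bounds it plays no essential role.

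The engine of the proof is an exact identity. The sub-stochastic transition matrix of the walk on $\{1,\dots,t-1\}$, absorbed at $0$ and $t$, is symmetric and is diagonalised in the orthogonal basis $\{(\sin(k\pi j/t))_{1\le j\le t-1}\}_{1\le k\le t-1}$, with eigenvalues $\cos(k\pi/t)$. Hence
\beq
\bP_1\big(S_m = j,\ H_0\wedge H_t > m\big) = \frac{2}{t}\sum_{k=1}^{t-1}\Big(\cos\frac{k\pi}{t}\Big)^{m}\, \sin\frac{k\pi}{t}\,\sin\frac{kj\pi}{t},
\eeq
and, summing over $j$ via $\sum_{j=1}^{t-1}\sin(kj\pi/t)=\cot(k\pi/2t)\,\ind_{\{k\ \mathrm{odd}\}}$,
\beq
\bP_1\big(H_0\wedge H_t > m\big) = \frac{4}{t}\sum_{\substack{1\le k\le t-1\\ k\ \mathrm{odd}}}\Big(\cos\frac{k\pi}{t}\Big)^{m}\cos^2\frac{k\pi}{2t}.
\eeq
For the ``$=n$'' quantity I would write the event $\{H_0\wedge H_t=m\}$ as $\{S_{m-1}\in\{1,t-1\}\}$ followed by a step onto $0$ or $t$; this brings in the point probabilities above at $j=1$ and $j=t-1$, i.e.\ the same type of sum with the extra weight $\sin^2(k\pi/t)$.

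It then remains to analyse these $k$-sums. By \eqref{eq:g}, $g(t)=-\log\cos(\pi/t)$ is minus the logarithm of the top eigenvalue, so the $k=1$ term equals exactly $\frac{4}{t}\cos^2(\pi/2t)\,e^{-g(t)m}$, of order $t^{-1}e^{-g(t)n}$. Using $\log\cos(k\pi/t)\le\log\cos(\pi/t)-c(k^2-1)/t^2$ for $1\le k\le t/2$ and $t$ large, and observing that the terms with $k$ close to $t-1$ carry a weight $\cos^2(k\pi/2t)=\sin^2((t-k)\pi/2t)=O((t-k)^2/t^2)$ and are therefore harmless, I would split into two regimes. If $n\ge t^2$, so that $t\wedge n^{1/2}=t$, the contribution of $k\ge 3$ is at most $\frac4t e^{-g(t)n}\sum_{k\ge 3}e^{-c(k^2-1)}$, a small fraction of the $k=1$ term, which yields matching bounds with denominator $t$. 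If $n< t^2$, so that $t\wedge n^{1/2}=n^{1/2}$ and $e^{-g(t)n}$ is bounded above and below by positive constants for $t>t_0$, then $\frac4t\sum_{k\ \mathrm{odd}}(\cos(k\pi/t))^{m}\cos^2(k\pi/2t)$ is, from above and from below, of the order of $\frac1t\sum_{k\ge 1}e^{-ck^2 n/t^2}$, hence of order $t^{-1}\cdot(t/\sqrt n)=n^{-1/2}$ by comparison with a Gaussian integral (using $t/\sqrt n\ge 1$). The second pair of inequalities follows in the same way, the extra weight $\sin^2(k\pi/t)$, of order $k^2/t^2$, turning $t^{-1}$ into $t^{-3}$ in the first regime and $n^{-1/2}$ into $n^{-3/2}$ in the second.

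The main obstacle is not the exponential rate — that much already follows from submultiplicativity — but obtaining the polynomial prefactors with constants $c_1,\dots,c_4$ that are uniform in $n$ across the two regimes $n\ge t^2$ and $n<t^2$. This forces one to keep several lower-order effects under simultaneous control: the correction $\cos^2(\pi/2t)\to1$, the $O(1/t^4)$ term in \eqref{eq:g}, the near-degenerate eigenvalues at the top of the spectrum (handled via the parity/non-emptiness constraint), and the matching of the two regimes at $n\asymp t^2$. Once these points are dealt with, the remaining estimates are the standard reflection-principle and local-central-limit-theorem toolbox, and I would relegate the computations to the appendix.
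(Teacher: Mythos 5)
Note first that the paper does not prove Proposition~\ref{pr:small_ball} at all: it is taken verbatim from Lemma~2.1 of Caravenna and P\'etr\'elis~\cite{CP09b}, and the text immediately below the statement says so explicitly. So there is no ``paper's own proof'' to compare against here; what you have written is a reconstruction of the proof in the cited source. That reconstruction is, as far as I can tell, on the right track, and it follows what is indeed the standard route for this estimate (and the one used in~\cite{CP09b}): reduce the symmetric two-sided event to a one-sided one via the first step, diagonalise the sub-stochastic transition matrix of the walk killed at $\{0,t\}$ in the discrete sine basis with eigenvalues $\cos(k\pi/t)$, sum over the endpoint $j$ to produce the exact identity
\beq
\bP_1\big(H_0\wedge H_t > m\big) = \frac{4}{t}\sum_{\substack{1\le k\le t-1\\ k\ \mathrm{odd}}}\Big(\cos\frac{k\pi}{t}\Big)^{m}\cos^2\frac{k\pi}{2t},
\eeq
and then analyse this sum in the two regimes $n\ge t^2$ (where the $k=1$ term dominates and gives the prefactor $t^{-1}$) and $n<t^2$ (where the sum is of integral type and gives $n^{-1/2}$). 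Your derivation of the ``$=n$'' case by restricting the local formula to $j\in\{1,t-1\}$, which swaps the weight $\cos^2(k\pi/2t)$ for $\sin^2(k\pi/t)$ and hence lowers the prefactor by $t^{-2}$ (resp.\ $n^{-1}$), is also correct. The identities you quote are right: $\sum_{j=1}^{t-1}\sin(kj\pi/t)=\cot(k\pi/2t)$ for $k$ odd and $0$ for $k$ even, and $\sin(k\pi/t)\cot(k\pi/2t)=2\cos^2(k\pi/2t)$.

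Two points deserve to be spelled out rather than ``relegated to the appendix,'' because they are the only places where the sketch could genuinely go wrong. First, the terms with $t/2<k\le t-1$ contribute $\cos^m(k\pi/t)$ of the \emph{same} exponential size as the $k=1$ term (since $|\cos(k\pi/t)|=|\cos((t-k)\pi/t)|$), possibly with a negative sign when $m$ is odd; they are only controlled by the damping $\cos^2(k\pi/2t)=\sin^2((t-k)\pi/2t)=O((t-k)^2/t^2)$, so the lower bound in the first pair of inequalities is not a triviality and one must actually carry the pairing $k\leftrightarrow t-k$ (or, for $t$ odd, track the signs) to confirm the net contribution is nonnegative and small. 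Second, the uniform inequality $\log\cos(k\pi/t)\le\log\cos(\pi/t)-c(k^2-1)/t^2$ needs the elementary bound $\cos x\le e^{-x^2/2}$ on $[0,\pi/2]$ together with the lower expansion of $\log\cos(\pi/t)$, which works for $k\ge 2$ and $t$ large but is not an identity; you implicitly use this to dominate the $O(1/t^4)$ error in \eqref{eq:g}. Both points are handled in~\cite{CP09b}, and neither is a gap in the \emph{idea}; I flag them only so that the write-up makes the sign and uniformity issues explicit rather than leaving them as loose ends.
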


This proposition is taken from Lemma 2.1 in Caravenna and Pétrélis~\cite{CP09b}. A crucial point here is the uniformity of the constants, which gives the uniformity of the constant $C$ in Proposition~\ref{prop:roughUB}.

\par Caravenna and Pétrélis~\cite{CP09b} treated the case of equally spaced traps, which we refer to as the {\it homogeneous} case, in the sense that increments of $\tau$ are all equal. We summarize their results here.

\begin{proposition}[Homogeneous case, see Eq. (2.1)-(2.3) in \cite{CP09b}]
\label{pr:homo}
Let $t\in \bbN$ and $\tau = t\bbZ$. There exists a constant $\phi(\gb, t)$ such that 
\beq 
\label{eq:315}
\phi(\gb, t) = - \lim_{n\to\infty} \frac{1}{n} \log \bP(\gs > n),
\eeq
with 
\beq
\label{eq:phi}
\phi(\gb, t) = \frac{\pi^2}{2t^2} \Big(1 - \frac{4}{e^{\gb} - 1}\frac{1}{t} + o\Big(\frac{1}{t} \Big) \Big).
\eeq
Moreover, it is the only solution of the equation:
\beq
\bE(\exp(\phi \inf\{n\ge 1 \colon S_n \in \tau\}))) = \exp(\gb),\qquad \gb\ge 0.
\eeq
\end{proposition}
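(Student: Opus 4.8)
The plan is to pass from the survival probability to a renewal-type generating function and then to read off both the existence of the exponential rate, its characterisation, and its asymptotics from the singularity structure. First I would integrate out the geometric killing clock $\cN$. Since $\tau=t\bbZ$ is translation invariant, the strong Markov property shows that the successive inter-visit times $(\gt_{j+1}-\gt_j)_{j\ge0}$ are i.i.d.\ copies of $\xi:=\inf\{n\ge1\colon S_n\in\tau\}$, independent of $\cN$, so that for $|z|$ small
\beq
\bE[z^{\gs}]=\sum_{k\ge1}(1-e^{-\gb})e^{-(k-1)\gb}\chi(z)^k=\frac{(1-e^{-\gb})\chi(z)}{1-e^{-\gb}\chi(z)},\qquad \chi(z):=\bE[z^{\xi}].
\eeq
Combined with the elementary identity $\sum_{n\ge0}\bP(\gs>n)z^n=(1-\bE[z^{\gs}])/(1-z)$ (legitimate since $\gs<\8$ a.s.), this produces the closed form
\beq
\label{eq:Agen}
A(z):=\sum_{n\ge0}\bP(\gs>n)z^n=\frac{1-\chi(z)}{(1-z)\,(1-e^{-\gb}\chi(z))}.
\eeq

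Next I would obtain an explicit handle on $\chi$ by a gambler's-ruin computation: solving $\tfrac z2(f(x+1)+f(x-1))=f(x)$ on $\{1,\dots,t-1\}$ with $f(0)=f(t)=1$ and setting $\chi(z)=zf(1)$ yields, after the substitution $\cos\go=1/z$,
\beq
\label{eq:chiform}
\chi(z)=1+\tan\go\,\tan\!\Big(\frac{t\go}{2}\Big),\qquad \go=\go(z)\in(0,\pi/t).
\eeq
From \eqref{eq:chiform} (or from Proposition~\ref{pr:small_ball}, noting that $\bP(\xi>n)=\bP(H_t\wedge H_0\wedge H_{-t}>n)$) one sees that $\chi$ is meromorphic, analytic on $\{|z|<e^{g(t)}\}$ with a simple pole at $z=\sec(\pi/t)=e^{g(t)}$, and that on $[1,e^{g(t)})$ it increases strictly and continuously from $\chi(1)=1$ to $+\8$. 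Hence there is a unique $z^{\star}\in(1,e^{g(t)})$ with $\chi(z^{\star})=e^{\gb}$, and I would \emph{define} $\phi(\gb,t):=\log z^{\star}$, which is precisely the unique solution of $\bE[e^{\phi\xi}]=e^{\gb}$, uniqueness coming from strict monotonicity of $\phi\mapsto\bE[e^{\phi\xi}]$ on $[0,g(t))$. In \eqref{eq:Agen} the singularity at $z=1$ is removable ($\chi(1)=1$, $\bE[\xi]<\8$), and at the poles of $\chi$ numerator and denominator blow up with ratio $e^{\gb}$, so these are removable as well; thus the only singularity of $A$ in $\{|z|<e^{g(t)}\}$ is the simple pole at $z^{\star}$ (together with its mirror image $-z^{\star}$ when $t$ is even, since the support of $\xi$ is then contained in $2\bbN$). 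Pringsheim's theorem and a standard singularity analysis of \eqref{eq:Agen} then give $-\tfrac1n\log\bP(\gs>n)\to\phi(\gb,t)$, the parity of $n$ not affecting the rate; existence of the limit is also a consequence of super-multiplicativity of $n\mapsto\bP(\gs>n)$.

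It then remains to expand $\phi(\gb,t)$ as $t\to\8$. By \eqref{eq:chiform} the defining equation $\chi(e^{\phi})=e^{\gb}$ reads $\tan\go\,\tan(t\go/2)=e^{\gb}-1$ with $\phi=-\log\cos\go$. Since $e^{\gb}-1$ is a fixed positive constant while $\tan\go=O(1/t)$, the factor $\tan(t\go/2)$ must diverge, which pins $\go$ near $\pi/t$; writing $\go=\tfrac{\pi}{t}(1-\gd)$, so that $\tan(t\go/2)=\cot(\pi\gd/2)\sim 2/(\pi\gd)$, the equation gives $\gd=\tfrac{2}{(e^{\gb}-1)t}+O(t^{-2})$, hence $\go^2=\tfrac{\pi^2}{t^2}\bigl(1-\tfrac{4}{(e^{\gb}-1)t}+O(t^{-2})\bigr)$, and finally $\phi=-\log\cos\go=\tfrac{\go^2}{2}+O(\go^4)$ yields
\beq
\phi(\gb,t)=\frac{\pi^2}{2t^2}\Big(1-\frac{4}{e^{\gb}-1}\,\frac1t+o\big(\tfrac1t\big)\Big).
\eeq
I expect this last step to be the delicate one: extracting the coefficient $4/(e^{\gb}-1)$ requires expanding $\tan$ and $\cos$ to the correct order and checking that the error terms are genuinely $o(1/t)$, with the required uniformity in $\go$ near $\pi/t$; the earlier steps are routine renewal theory and singularity analysis. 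This proposition is in any case taken verbatim from \cite[Eq.~(2.1)--(2.3)]{CP09b}, where the full details can be found.
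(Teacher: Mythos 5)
The paper does not prove Proposition~\ref{pr:homo}; it imports it verbatim from Caravenna and P\'etr\'elis~\cite{CP09b} (see Eq.~(2.1)--(2.3) there), so there is no in-paper argument to compare against. Your reconstruction is a correct, self-contained proof and follows the same strategy as the cited source: integrating out the geometric clock to express $\sum_n \bP(\gs>n)z^n$ as a rational function of the return-time generating function $\chi$, identifying $\phi(\gb,t)=\log z^\star$ as the logarithm of the dominant singularity $z^\star$ solving $\chi(z^\star)=e^\gb$, and then extracting the $t\to\infty$ expansion from the explicit trigonometric formula for $\chi$. The algebra in each step checks out: $A(z)=(1-\chi(z))/\bigl((1-z)(1-e^{-\gb}\chi(z))\bigr)$, the gambler's-ruin derivation $\chi(z)=1+\tan\go\,\tan(t\go/2)$ agrees with the decomposition $\chi=Q_{0,-1}+Q_{00}+Q_{0,1}$ via the Laplace transforms quoted in the paper's Lemma~\ref{lem:control_ratio}, and the Ansatz $\go=\tfrac{\pi}{t}(1-\gd)$ with $\gd=\tfrac{2}{(e^\gb-1)t}+O(t^{-2})$ reproduces \eqref{eq:phi}.

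One minor imprecision worth flagging: the parenthetical claim that existence of the limit also follows from super-multiplicativity of $n\mapsto\bP(\gs>n)$ is not quite right as stated. The inequality $\bP(\gs>n+m)\ge\bP(\gs>n)\bP(\gs>m)$ would require $\bE_x\bigl[e^{-\gb L_m}\bigr]\ge\bE_0\bigl[e^{-\gb L_m}\bigr]$ for all $x$, and this fails in general (e.g.\ $t=2$, $m=1$, $x$ odd). The correct super-multiplicative quantity is the restricted one, $a_n:=\bP(\gs>n,\,S_n\in\tau)$, for which $a_{n+m}\ge a_n a_m$ does hold by the Markov property and translation invariance of $t\bbZ$ (along a fixed parity class when $t$ is even), after which one compares $a_n$ with $\bP(\gs>n)$. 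Since your main argument is the singularity analysis and the aside is not used, this does not affect the validity of the proof, but the remark should be corrected or dropped.
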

Note that the first order term in the expansion of $\phi$ does not depend on $\gb$.
It turns out that we are able to extend this proposition, at the price of additional technical work, to deal with the {\it weakly-inhomogeneous} case, that is when increments of $\tau$ follow a periodic pattern. We obtain the following:

\begin{proposition}[Weakly-inhomogeneous case] 
\label{pr:periodic}
Let $p\ge 2$, $t_1, \ldots, t_p$ positive integers and $\tau$ be the periodic set $\{\tau_i \colon 0\le i < p\} + \tau_p \bbZ$, where $\tau_0 = 0$ and $\tau_i = \sum_{1\le j \le i} t_j$ for all $0<i<p$.
There exists a constant $\phi = \phi(\gb ; t_1, \ldots, t_p)$ such that 
\beq 
\phi(\gb ; t_1, \ldots, t_p) = - \lim_{n\to\infty} \frac{1}{n} \log \bP(\gs > n).
\eeq
Moreover,
\beq
\bP(\gs > n) \le Cn^2p \exp(-\phi(\gb  ; t_1, \ldots, t_p) n),\quad n\geq 1,
\eeq
and
\beq \label{eq:comp.phi}
 \phi(\gb, t_{\max}) \le \phi(\gb ; t_1, \ldots, t_p) < g(t_{\max}), \qquad t_{\max} = \max_{1\le i \le p} t_i.
\eeq
\end{proposition}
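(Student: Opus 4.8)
The plan is to reduce the weakly-inhomogeneous case to a finite-dimensional Perron--Frobenius analysis, in the same spirit as the homogeneous case of Proposition~\ref{pr:homo}, while keeping track of the periodic structure. First I would set up the natural ``fundamental domain'' decomposition: since $\tau$ is periodic with period $\tau_p$ and exactly $p$ traps per period, the walk visits traps in a prescribed cyclic order, and I would introduce the $p\times p$ matrix of moment generating functions of the inter-trap passage times within one period, i.e.\ a periodic analogue of $Q(\phi)$ from~\eqref{eq:defQij}, where the entry counting a step from trap-class $i$ to trap-class $j$ is $Q_{ij}(\phi)=\bE_{\tau_i}(e^{\phi\theta_1}\ind_{\{S_{\theta_1}=\tau_j\}})$ and is nonzero only for $|j-i|\le 1$ modulo $p$. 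After integrating out the geometric killing clock $\cN$ with parameter $1-e^{-\beta}$ (which simply multiplies each visit to a trap by a weight $e^{-\beta}$), the survival probability $\bP(\gs>n)$ is, up to polynomial-in-$n$ factors, governed by the largest value of $\phi\ge 0$ for which the weighted transfer matrix $e^{-\beta}Q(\phi)$ (restricted to the period) has Perron eigenvalue equal to $1$. By irreducibility and the Perron--Frobenius theorem this $\phi$ exists, is unique, and I would define it to be $\phi(\gb;t_1,\dots,t_p)$; the exponential decay rate in~\eqref{eq:315}'s analogue then follows from a standard renewal/large-deviation argument (subadditivity gives the limit; the matrix spectral radius identifies it).

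Next I would prove the polynomial prefactor bound $\bP(\gs>n)\le Cn^2p\exp(-\phi n)$. The factor $p$ and one factor of $n$ come from summing over which of the $p$ trap-classes the walk currently sits near and over the number of traps visited (which is at most $O(n)$, but in the confinement regime much smaller — one is free to bound crudely here); the remaining factor $n^2$ comes from the small-ball estimate Proposition~\ref{pr:small_ball}, specifically the $c_4/(t^3\wedge n^{3/2})$ bound on $\bP(H_t\wedge H_0\wedge H_{-t}=n)$ applied within the largest gap $t_{\max}$, combined with a sum over the time spent before the walk settles. Concretely I would decompose $\{\gs>n\}$ according to the last trap visited strictly before time $n$ and the excursion structure, use the matrix representation to extract the $e^{-\phi n}$ factor with a bounded constant (uniformity of the constants in Proposition~\ref{pr:small_ball} is exactly what makes the constant $C$ here independent of the $t_i$'s, modulo the explicit $n^2p$), and sum the geometric-type series in the eigenvalue gap.

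For the two-sided comparison~\eqref{eq:comp.phi}, the lower bound $\phi(\gb,t_{\max})\le\phi(\gb;t_1,\dots,t_p)$ follows by a confinement/restriction argument: restricting the walk to stay forever inside the single largest gap (of width $t_{\max}$) only decreases the survival probability, and inside that gap the dynamics is exactly the homogeneous one with spacing $t_{\max}$, so its decay rate $\phi(\gb,t_{\max})$ is an upper bound for $-\tfrac1n\log\bP(\gs>n)$ in the limit, hence a lower bound for $\phi(\gb;t_1,\dots,t_p)$ after the sign flip. For the strict upper bound $\phi(\gb;t_1,\dots,t_p)<g(t_{\max})$, I would argue that $g(t_{\max})=\tfrac{\pi^2}{2t_{\max}^2}$ is the decay rate of confinement in the largest gap \emph{with no killing penalty at the two endpoints}; since the real walk does pay a strictly positive penalty $e^{-\beta}$ each time it touches those endpoint-traps and (by irreducibility and $p\ge 2$) it must touch them with positive frequency under the tilted measure, the true exponential rate is strictly larger, i.e.\ $\phi<g(t_{\max})$. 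This can be made rigorous either by an explicit eigenvalue perturbation — showing the Perron eigenvalue of $e^{-\beta}Q(\phi)$ at $\phi=g(t_{\max})$ is strictly less than $1$ — or by the characterizing equation $\bE(\exp(\phi\,\theta_1))=e^{\beta}$ from Proposition~\ref{pr:homo} adapted to the periodic setting.

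The main obstacle I expect is the strict inequality $\phi<g(t_{\max})$ together with making the constant in the prefactor bound genuinely uniform. The decay rate $g(t_{\max})$ is approached in the limit where $\beta\to\infty$ (hard traps), so for fixed $\beta>0$ one needs a quantitative separation; the cleanest route is the Perron--Frobenius / generating-function characterization, where one shows that the relevant analytic function of $\phi$ has not yet reached its critical value at $\phi=g(t_{\max})$ because the endpoint traps of the largest gap contribute a strictly sub-unit weight. Care is needed because the walk can in principle exit the largest gap and wander through smaller ones, so the comparison with the purely homogeneous problem is only an inequality, not an identity, and the transfer-matrix bookkeeping across the whole period — rather than a single gap — is what delivers both the existence of $\phi$ and the correct strict comparison.
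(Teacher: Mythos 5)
Your overall framework — transfer matrix $Q(\phi)$, integrate out the geometric clock to get $e^{-\beta}Q(\phi)$, define $\phi(\gb;t_1,\dots,t_p)$ as the solution of $\gL(\phi)=e^\gb$ where $\gL$ is the Perron--Frobenius eigenvalue — is exactly the one the paper uses (Steps~1--3 of the appendix), and the spectral/renewal mechanism you sketch for existence of the limit and for the $n^2p$ prefactor is in the right spirit (the paper gets the factor $np$ from the spectral decomposition $\cZ_n(0)\le np$ and one further factor of $n$ by removing the constraint $\{S_n\in\tau\}$ via a decomposition at the last visit to $\tau$).

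However, both halves of your treatment of \eqref{eq:comp.phi} contain genuine errors, not just gaps. For the first inequality $\phi(\gb,t_{\max})\le\phi(\gb;t_1,\dots,t_p)$, confining the walk forever inside the single largest gap gives a survival probability with exponential rate $g(t_{\max})=\tfrac{\pi^2}{2t_{\max}^2}$ (no killing ever occurs inside the gap), \emph{not} $\phi(\gb,t_{\max})$; that confinement strategy proves the non-strict version of the \emph{second} inequality, not the first. The paper instead proves the first inequality by a coupling on the trap-hitting time: since every gap in $\tau$ is $\le t_{\max}$, the next trap is hit stochastically faster in $\tau$ than in $t_{\max}\bbZ$, so the row sums satisfy $\sum_j Q_{ij}(\phi)\le\bE(e^{\phi\theta_1^{\max}})$; evaluating at $\phi=\phi(\gb,t_{\max})$ and invoking the elementary fact (Lemma~\ref{lem:PFone}) that a nonnegative matrix with row sums $\le1$ has Perron eigenvalue $\le1$ gives $\gL(\phi(\gb,t_{\max}))\le e^\gb$, hence the inequality by monotonicity of $\gL$. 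For the strict inequality $\phi<g(t_{\max})$, your claimed mechanism (the Perron eigenvalue of $e^{-\beta}Q(\phi)$ at $\phi=g(t_{\max})$ is ``strictly less than $1$'') is backwards: at $\phi=g(t_{\max})$ the entries of $Q(\phi)$ corresponding to the largest gap are $+\infty$, so the eigenvalue is $+\infty$, not sub-unit. The correct argument, which the paper gives in Step~2, is that $\gL$ is finite, continuous, and increasing on $[0,g(t_{\max}))$ with $\gL(\phi)\to\infty$ as $\phi\nearrow g(t_{\max})$, so the unique root of $\gL(\phi)=e^\gb<\infty$ lies strictly inside that interval. Your heuristic that ``paying a strictly positive penalty at the endpoints makes the true rate strictly larger'' also points the wrong way: the reason $\phi<g(t_{\max})$ is that the walk can do \emph{better} than pure confinement by occasionally crossing traps (paying $e^{-\gb}$) to gain room, so the decay is strictly slower than $g(t_{\max})$.
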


The proof is deferred to Appendix~\ref{sec:periodic}. Remark that both inequalities in \eqref{eq:comp.phi} are intuitive: the first one asserts that it is easier to survive in a homogeneous environment with gap $t_{max}$ than in the original environment. The second one states that one of the strategy to survive is to stay confined in the largest gap. With this estimate in hand, we get our first building block, that is an upper bound on the probability to survive in-between two traps, for a general environment $\tau$.

\begin{proposition}\label{prop:roughUB}
There exists a constant $C>0$ such that for all $0\le k < r <  \ell$, one has
\beq
\bP_{\tau_r}(\sigma \wedge H_{\tau_k} \wedge H_{\tau_\ell} > n) \le Cn^2(\ell-k) \exp(-\phi(\gb;\max\{t_i \colon k<i\le \ell\})n),
\eeq
where $\phi(\gb;\cdot)$ is defined in Proposition~\ref{pr:homo}.

\end{proposition}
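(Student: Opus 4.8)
The plan is to reduce the statement to the weakly-inhomogeneous estimate of Proposition~\ref{pr:periodic} by replacing the finite stretch of gaps $t_{k+1}, \ldots, t_\ell$ with a periodic environment and controlling the resulting error. Fix $0 \le k < r < \ell$ and write $m = \ell - k$ for the number of gaps involved and $t_{\max} = \max\{t_i \colon k < i \le \ell\}$. The walk under consideration starts at $\tau_r$ and must avoid $\tau_k$ and $\tau_\ell$ (hence stay within $(\tau_k, \tau_\ell)$) while surviving the traps $\tau_{k+1}, \ldots, \tau_{\ell-1}$ until time $n$. Since the walk is confined to the interval $(\tau_k,\tau_\ell)$, its behaviour is insensitive to the environment outside that interval; in particular, whether we surround the pattern $t_{k+1}, \ldots, t_\ell$ by the hard boundaries $\tau_k, \tau_\ell$ or by a periodic continuation of the same pattern, the event on the left-hand side is contained in the corresponding ``confined-until-time-$n$'' event for the periodic environment.

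More precisely, first I would apply the strong Markov property at the first hitting time of one of the \emph{endpoints} $\tau_k$ or $\tau_\ell$: on the event in question this hitting time exceeds $n$, so $\sigma \wedge H_{\tau_k}\wedge H_{\tau_\ell} > n$ already forces $\sigma > n$ for a walk that sees \emph{only} the traps strictly inside $(\tau_k,\tau_\ell)$. Consider then the periodic set $\tilde\tau$ built from the pattern $(t_{k+1}, \ldots, t_\ell)$ with period $\tau_\ell - \tau_k$, as in Proposition~\ref{pr:periodic} (so $p = m$). Any trajectory contributing to $\bP_{\tau_r}(\sigma\wedge H_{\tau_k}\wedge H_{\tau_\ell} > n)$ stays inside a single period of $\tilde\tau$ and meets exactly the same traps at exactly the same times as it would in the original environment; hence it also contributes to $\tilde\bP_{\tau_r}(\tilde\sigma > n)$, where $\tilde\sigma$ is the death time in the periodic environment. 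This gives
\beq
\bP_{\tau_r}(\sigma \wedge H_{\tau_k} \wedge H_{\tau_\ell} > n) \le \tilde\bP_{\tau_r}(\tilde\sigma > n) = \tilde\bP(\tilde\sigma > n),
\eeq
the last equality by translation invariance of the periodic environment (the starting point $\tau_r$ is one of the trap sites of $\tilde\tau$). Proposition~\ref{pr:periodic} then bounds the right-hand side by $C n^2 m \exp(-\phi(\gb; t_{k+1}, \ldots, t_\ell) n)$, and the first inequality in \eqref{eq:comp.phi} gives $\phi(\gb; t_{k+1}, \ldots, t_\ell) \ge \phi(\gb, t_{\max})$, so the exponential factor is at most $\exp(-\phi(\gb, t_{\max}) n)$. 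Since $m = \ell - k$, this is exactly the claimed bound.

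The main obstacle, and the only point requiring genuine care, is the bookkeeping in the reduction step: one must check that starting the periodic walk precisely at the trap $\tau_r$ (rather than at an arbitrary phase) is legitimate, that the pattern length $p = m$ matches the $p$ appearing in Proposition~\ref{pr:periodic} so that the prefactor is $Cn^2 p = Cn^2(\ell-k)$ and not something larger, and that the killing mechanism (the geometric clock $\cN$) is coupled identically in the two environments so that survival events really are nested. None of this is deep, but it is where an incautious argument could slip. A minor subsidiary point is the parity restriction in the small-ball estimates feeding Proposition~\ref{pr:periodic}; since we only claim an \emph{upper} bound valid for all $n$, and the periodic survival estimate in Proposition~\ref{pr:periodic} is already stated for all $n \ge 1$, no parity discussion is needed here.
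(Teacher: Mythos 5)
Your proposal is correct and follows essentially the same route as the paper's proof: periodize the pattern $(t_{k+1},\ldots,t_\ell)$ so that confinement in $(\tau_k,\tau_\ell)$ lets you dominate the target probability by the survival probability in the periodic environment started from the (shifted) origin, then invoke Proposition~\ref{pr:periodic} together with the comparison $\phi(\gb,t_{\max})\le\phi(\gb;t_{k+1},\ldots,t_\ell)$ from~\eqref{eq:comp.phi}. The only difference is presentational: you spell out the bookkeeping (period length $p=\ell-k\ge 2$, coupling of the killing clock, translation invariance) that the paper leaves implicit.
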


\begin{proof}[Proof of Proposition~\ref{prop:roughUB}]
The proof relies on periodization. Since the random walk does not leave the interval $(\tau_k,\tau_\ell)$ on the event considered, we may as well replace the renewal $\tau$ by a periodized version, and by translation invariance, consider that the random walk starts at zero. Finally, everything is as if the walk evolves in a new environment $\tilde\tau$, with periodic increments, defined by
\beq
\tilde{\tau}=\{\tau_i \colon k\le i \le \ell \} -\tau_r+ (\tau_\ell-\tau_k) \bbZ,
\eeq
and we now have to bound from above 
$
\bP^{\tilde\tau}(\sigma \wedge H_{\tilde\tau_{k-r}} \wedge H_{\tilde\tau_{\ell-r}} > n),
$
where we put a superscript on $\bP$ to stress that the walk evolves among $\tilde\tau$. This probability is certainly smaller than $\bP_0^{\tilde\tau}(\sigma> n)$, and we may now conclude thanks to Proposition~\ref{pr:periodic}.
\end{proof}


\subsection{A decoupling inequality} \label{subsec:di} %
The next building block is a control on the probability that the walk reaches a given point before a certain fixed time, conditional on survival and not coming back to $0$. 
In essence, the following proposition tells us that the walk reaches this point {\it stochastically faster} in the presence of traps:

\begin{proposition}\label{prop:FKG}
For all $\gb>0$, $x\in \bbN$ and $n\in \N$,
\beq
\label{eq:propFKG}
\bP^{\gb}(H_x \le n\ |\ \gs\wedge H_0 > H_x) \ge \bP(H_x \le n\ |\ H_0 > H_x).
\eeq
\end{proposition}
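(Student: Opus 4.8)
The plan is to prove the inequality by a coupling/FKG-type argument, exploiting the fact that both sides concern the same underlying walk restricted to the event $\{H_x < H_0\}$, and that conditioning on survival up to $H_x$ only reweights paths by a decreasing function of the number of trap visits. Concretely, I would work under the conditional law $\bP(\,\cdot \mid H_x < H_0)$, i.e.\ on walk trajectories that reach $x$ before returning to $0$. On this space, introduce the random variable $K = \sum_{k=1}^{H_x} \ind_{\{S_k \in \tau\}}$, the number of trap encounters before the first visit to $x$, and recall that integrating over the killing clock $\cN$ gives
\beq
\bP^{\gb}(H_x \le n \mid \gs \wedge H_0 > H_x) = \frac{\bE\big(e^{-\gb K}\,\ind_{\{H_x \le n\}}\,\ind_{\{H_x < H_0\}}\big)}{\bE\big(e^{-\gb K}\,\ind_{\{H_x < H_0\}}\big)}.
\eeq
So the claim reduces to showing
\beq
\label{eq:reduction}
\bE\big(e^{-\gb K}\,\ind_{\{H_x \le n\}} \mid H_x < H_0\big) \ge \bE\big(e^{-\gb K} \mid H_x < H_0\big)\cdot \bP\big(H_x \le n \mid H_x < H_0\big),
\ee q
which is exactly an inequality of the form $\bE(fg) \ge \bE(f)\bE(g)$ (all expectations under $\bP(\,\cdot\mid H_x<H_0)$) with $f = e^{-\gb K}$ and $g = \ind_{\{H_x \le n\}}$. [Correcting the stray space: the display should read \verb|\eeq|.]

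To obtain \eqref{eq:reduction} it suffices to show that, under $\bP(\,\cdot \mid H_x < H_0)$, the variable $K$ and the variable $H_x$ are \emph{negatively associated} in the weak sense that the decreasing function $e^{-\gb K}$ is positively correlated with the decreasing function $\ind_{\{H_x \le n\}}$; equivalently, that $K$ is conditionally stochastically increasing in $H_x$, or that the pair $(K, H_x)$ satisfies a suitable FKG-type lattice condition. The natural way I would carry this out is to build an explicit monotone coupling: decompose a trajectory reaching $x$ before $0$ into its successive excursions, and observe that a longer trajectory (larger $H_x$) can be coupled with a shorter one so that the longer one visits the trap set at least as many times. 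This is intuitively clear—extra time spent before hitting $x$ can only add trap visits—but turning it into a genuine coupling requires care because $H_x$ and $K$ are not independent and the excursion structure must be handled so that the coupling is monotone in both coordinates simultaneously. An efficient route is to condition further on the entire sequence of sites visited (the range and local times), or on the skeleton of ladder-type increments, reducing $K$ to a deterministic functional of the path and $H_x$ to the total length; then the required monotonicity becomes a statement about paths of fixed geometry being reordered, and a direct coupling on excursion lengths goes through.

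The main obstacle I anticipate is precisely establishing this monotone coupling rigorously: the quantity $K$ depends on the full geometry of the path (which traps, how many times), not merely on $H_x$, so the correlation inequality is not an immediate consequence of a one-dimensional FKG inequality and one must set up the right partial order on trajectories (or the right conditioning) under which both $K$ is monotone and the conditional law of the path is FKG. Once the coupling is in place, the rest is bookkeeping: plug the resulting correlation inequality into \eqref{eq:reduction}, divide by $\bE(e^{-\gb K}\mid H_x<H_0) > 0$, and recognize the two sides as the conditional probabilities in \eqref{eq:propFKG}. I would also note that the special structure here—$H_0$ and $\sigma$ enter only through the truncation $\{\gs \wedge H_0 > H_x\}$ and $K$ is supported on the time interval $[1,H_x]$—is what makes the conditioning on $\{H_x < H_0\}$ clean, since on that event the killing mechanism before $H_x$ is unaffected by whatever the walk does afterwards.
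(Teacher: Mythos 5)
Your reduction of \eqref{eq:propFKG} to the correlation inequality $\bE(fg \mid H_x<H_0)\ge\bE(f\mid H_x<H_0)\,\bE(g\mid H_x<H_0)$ with $f=e^{-\gb K}$ and $g=\ind_{\{H_x\le n\}}$ is correct, and the heuristic (longer paths tend to visit more traps, so the two decreasing functionals should be positively correlated) is sound. But the proposal stops precisely where the proof begins: you yourself identify ``establishing this monotone coupling rigorously'' as the main obstacle, and then leave it entirely to the reader. There is no partial order on trajectories exhibited, no FKG lattice condition verified, no coupling constructed. Since the correlation inequality for all $n$ is equivalent to the stochastic domination you are trying to prove, the reduction does not by itself make any progress; as written, the argument is circular in spirit -- it replaces the claim by an equivalent claim plus the assertion that a suitable coupling ``should'' exist.

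The paper avoids this difficulty by taking a different, and decisively more concrete, route. It observes that the stopped walk $(S_{k\wedge H_x})_{k\ge0}$ is a Markov chain under $\bP^\gb(\cdot\mid\gs\wedge H_0>H_x)$ on $\{0,1,\dots,x\}$, with $x$ absorbing, and writes its one-step transition probabilities $\bar Q_\gb(a,\cdot)$ explicitly in terms of the $h$-transform by $a\mapsto e^{-\gb\ind_{\{a\in\tau\}}}\bP_a(\gs\wedge\tilde H_0>\tilde H_x)$. The key computation is that the ratio
\[
\frac{\bar Q_\gb(a,a-1)}{\bar Q_\gb(a,a+1)}=e^{-\gb\ind_{\{a-1\in\tau\}}}\,\bP_{a-1}(\gs\wedge\tilde H_0>H_{a+1})
\]
is non-increasing in $\gb$ for every interior state $a$, so the $\gb>0$ chain has a uniformly stronger drift towards $x$ than the $\gb=0$ chain. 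For nearest-neighbour chains this pointwise comparison of transition probabilities yields, by a standard monotone coupling on the one-dimensional state space, the stochastic domination $H_x\preceq H_x$ between the two laws -- which is exactly \eqref{eq:propFKG}. This is where your sketch and the actual proof diverge: the coupling that works is not on the space of trajectories ordered by some path-geometry partial order (where monotonicity of $K$ in $H_x$ is genuinely delicate, since $K$ depends on the full path and not just its length), but on the much simpler totally ordered state space $\{0,\dots,x\}$, after having reduced to a comparison of one-step kernels. If you want to complete your argument along your own lines, you would still need to supply a concrete coupling and verify the FKG lattice condition on path space, which is substantially harder than the paper's computation; I would recommend instead adopting the Markov-chain comparison.
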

Let us stress that this proposition is general, as it does not depend on the position of the traps.

\begin{proof}[Proof of Proposition~\ref{prop:FKG}]
Let $x\in\bbN$. We first remark that the stopped process $(S_{k\wedge H_x})_{k\ge 0}$ is still a Markov chain under $\bP^{\gb}(\cdot | \gs\wedge H_0 > H_x)$, with $0$ as initial state, $x$ as absorbing state, and transition probabilities given by
\beq
\ba
\bar Q_\gb(a,b) := 
\begin{cases}
\frac{e^{-\gb\ind_{\{b\in\tau\}}}\bP_b(\gs\wedge\tilde H_0 > \tilde H_x)}{e^{-\gb\ind_{\{a+1\in\tau\}}}\bP_{a+1}(\gs\wedge\tilde H_0 > \tilde H_x) + e^{-\gb\ind_{\{a-1\in\tau\}}}\bP_{a-1}(\gs\wedge\tilde H_0 > \tilde H_x)} & \text{if } |b-a| = 1\\
0 & \text{otherwise,}
\end{cases}
\ea
\eeq
where $1\le a < x$ and $\tilde H_z := \inf\{n\ge 0\colon S_n = z\}$. By applying the strong Markov property at $H_{a+1}$, observe that
\beq
\frac{\bar Q_\gb(a,a-1)}{\bar Q_\gb(a,a+1)} = \frac{e^{-\gb\ind_{\{a-1\in\tau\}}}\bP_{a-1}(\gs\wedge\tilde H_0 > \tilde H_x)}{e^{-\gb\ind_{\{a+1\in\tau\}}}\bP_{a+1}(\gs\wedge\tilde H_0 > \tilde H_x)}
= e^{-\gb\ind_{\{a-1\in\tau\}}}\bP_{a-1}(\gs\wedge\tilde H_0 > H_{a+1}),
\eeq
and note that this ratio is non-increasing in $\gb$, for all $1\le a < x$. We may deduce by a standard coupling argument that $H_x$ is stochastically smaller under $\bP^{\gb}(\cdot | \gs\wedge H_0 > H_x)$ than under $\bP(\cdot | H_0 > H_x)$, which corresponds to the case $\gb=0$. This concludes the proof.
\end{proof}
 
\subsection{Two technical lemmas} \label{subsec:ttl}
\par Recall the notations in~\eqref{eq:defQij} and~\eqref{eq:deftij}.
\begin{lemma}
\label{lem:control_ratio}
Define the function $f \colon z\in (0,\pi) \mapsto z/\sin(z)$. Let $n\in\bbN$. For $\gep>0$ small enough, there exist $\ga=\ga(\gep)>0$, $C>0$ and $T_0(\gep)\in\bbN$ such that for $T> T_0(\gep) \vee \max_{0\le x \le n} T_x$ and $\phi = \frac{\pi^2}{2T^2}$,
\beq
\label{eq:ratio}
\frac{Q_{x,y}(\phi)}{Q_{x,y}(0)}
\leq
\begin{cases}
\exp(\gep) & \text{if } x\neq y  \text{ and } T_{xy} \le \alpha T, \text{ or } x=y\\
2 f\left(\pi \frac{\max_{0\leq x\leq n}T_{x}}{T}(1+\frac{C}{T^2})\right) & \text{else.}
\end{cases}
\eeq
\end{lemma}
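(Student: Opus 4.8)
The plan is to analyze the ratio $Q_{x,y}(\phi)/Q_{x,y}(0)$ by decomposing according to whether the single-step excursion from $\tau_x$ to $\tau_y$ is ``easy'' (short gap) or potentially ``long'', and to exploit the explicit formula for the moment-generating function of the first passage time of a simple random walk across an interval. Recall from \eqref{eq:defQij} that $Q_{x,y}(\phi) = \bE_{\tau_x}(e^{\phi\theta_1}\ind_{\{S_{\theta_1}=\tau_y\}})$, so the ratio is a size-biasing of the excursion length by $e^{\phi\theta_1}$, conditioned on the excursion landing at $\tau_y$. The key classical input is that if $W$ denotes the first exit time of a simple random walk from an interval of width $w$ (started at one end, or in the interior), then $\bE(e^{\phi W}\ind_{\{\cdot\}})$ has a closed form in terms of $\cos$ and $\sin$, finite precisely when $\phi < g(w) = -\log\cos(\pi/w)$, and the ``cost'' of the exponential tilt is governed by $1/\cos(\pi w'/w)$-type factors where $w'$ is the relevant gap; this is exactly where the function $f(z) = z/\sin z$ enters.

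First I would make the periodization/reduction explicit: on the event $\{S_{\theta_1} = \tau_y\}$ with $|y-x|\le 1$, the walk stays within the gap(s) adjacent to $\tau_x$, namely an interval of width at most $t_{xy}$ when $x\ne y$ (resp. $t_{x,x+1}\vee t_{x,x-1} = $ the relevant max when $x=y$, using the definition \eqref{eq:deftij}). So $\theta_1$ under $\bP_{\tau_x}(\cdot\,;S_{\theta_1}=\tau_y)$ is dominated by — in fact equals in law — a first-passage time for SRW in an interval of width $t_{xy}$ (up to the $x=y$ bookkeeping, where the walk must return to $\tau_x$ after possibly wandering in either adjacent gap). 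Then $Q_{x,y}(\phi)/Q_{x,y}(0) = \bE(e^{\phi W}\mid \text{lands at }y)\big/1$ where $W$ is this first-passage time, and I would write down the exact generating-function expression. For $\phi = \pi^2/(2T^2)$, one has $\cos(\sqrt{2\phi}) = \cos(\pi/T)$ in the right regime, and the ratio becomes controlled by a factor of the form $1/\cos(\pi t_{xy}/T)$ or, after the standard trigonometric manipulation that produces the first-passage kernel, by $f(\pi t_{xy}/T)$ up to the $O(1/T^2)$ corrections coming from the discrete (as opposed to Brownian) nature of the walk and from $g(T) = \pi^2/(2T^2) + O(1/T^4)$.

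Next I would split into the two cases of \eqref{eq:ratio}. In the case $T_{xy}\le\alpha T$ with $\alpha=\alpha(\epsilon)$ small (or $x=y$, where I'd argue separately that the contribution is still uniformly bounded by $e^\epsilon$ because returning to $\tau_x$ within a gap of size $\le\alpha T$ costs a tilt factor $\le 1/\cos(\pi\alpha) \le e^\epsilon$ for $\alpha$ small): here $\pi t_{xy}/T \le \pi\alpha$, so $f(\pi t_{xy}/T) \le f(\pi\alpha) \le 1 + C\alpha^2$, and choosing $\alpha$ small and then $T_0(\epsilon)$ large enough to absorb the $O(1/T^2)$ discretization corrections gives the bound $e^\epsilon$. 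In the complementary case — $x\ne y$ and $T_{xy} > \alpha T$, so necessarily $T_{xy} = \max_{0\le x\le n}T_x$ (the record gap), since all gaps are $\le T$ and the excursion lands one step over — the width is $\le \max_{0\le x\le n}T_x$, and the generating-function bound yields the factor $2f(\pi\frac{\max_{0\le x\le n}T_x}{T}(1+C/T^2))$ directly, with the factor $2$ absorbing the boundary/discretization constants and the possibility that the walk makes excursions into both adjacent gaps before crossing. One checks $\phi < g(t_{xy})$ so the generating function is finite: indeed $T > \max_x T_x \ge t_{xy}$ forces $\pi^2/(2T^2) < \pi^2/(2 t_{xy}^2) \le$ (up to $O(1/T^4)$) $g(t_{xy})$, which is where the hypothesis $T > \max_{0\le x\le n}T_x$ is used.

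The main obstacle I expect is handling the $x=y$ diagonal case cleanly and controlling the discretization error uniformly in $n$: when $x=y$ the excursion can go into either of the two gaps adjacent to $\tau_x$, return to $\tau_x$, and one must bound the tilted return probability by combining the two one-sided first-passage generating functions via the strong Markov property — and the relevant width is then $t_{x,x+1}\vee t_{x,x-1}$, matching \eqref{eq:deftij}, but the bookkeeping is fiddly. The uniformity over $0\le x\le n$ in the choice of $T_0(\epsilon)$ is not actually problematic once one notes that the $O(1/T^2)$ corrections in $g$ and in the SRW first-passage asymptotics are genuinely uniform (they come from Proposition~\ref{pr:small_ball}'s uniform constants and from the explicit $\cos$ formula, not from anything $x$-dependent), so the same $T_0(\epsilon)$ works for every edge; I would flag this but the real care goes into the exact trigonometric identities for the discrete first-passage generating function and verifying the constant $2$ genuinely dominates.
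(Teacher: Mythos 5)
Your treatment of the off-diagonal case $x\neq y$ is essentially the paper's: use the explicit formula $Q_{x,y}(\phi) = \tan\Delta/(2\sin(T_{xy}\Delta))$ with $\Delta=\arctan\sqrt{e^{2\phi}-1}$, compare to $Q_{x,y}(0)=1/(2T_{xy})$, get the ratio $\asymp f(T_{xy}\Delta)$ up to a $(1+\epsilon)$ factor, and then either $T_{xy}\le\alpha T$ gives $f$ near $1$, or the general bound gives the second alternative with the factor $2$ absorbing $(1+\epsilon)$. That part is correct.

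The gap is in the diagonal case $x=y$. You write that for $x=y$ ``returning to $\tau_x$ within a gap of size $\le\alpha T$ costs a tilt factor $\le 1/\cos(\pi\alpha)$,'' but the lemma puts \emph{every} diagonal entry in the $\exp(\epsilon)$ case, with \emph{no} restriction on the adjacent gap sizes — including the diagonal entry at the record gap, where $T_{x,x+1}$ can be as large as $\max_{0\le x\le n}T_x$, i.e.\ almost $T$. In that regime the crude estimate $1/\cos(\pi T_{x,x+1}/T)$ blows up, and the excursion time before returning to $\tau_x$ can be of order $T_{x,x+1}^2$, so the tilt is not obviously harmless. What actually saves the bound is a cancellation between numerator and denominator that your sketch never invokes: $Q_{xx}(0)=1-\tfrac12 T_{x-1,x}^{-1}-\tfrac12 T_{x,x+1}^{-1}$ is itself close to $1$ when the gaps are large, and $Q_{xx}(\phi)=1-\tfrac12\tan\Delta/\tan(T_{x-1,x}\Delta)-\tfrac12\tan\Delta/\tan(T_{x,x+1}\Delta)$ is likewise close to $1$ because $T_{x,x+1}\Delta$ is bounded strictly away from $\pi$ (thanks to $T>\max_x T_x$, which forces $T_{x,x+1}\Delta<\pi/(1+\epsilon)$). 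The paper then runs a dichotomy on whether $T_{x,x+1}$ is at least $\epsilon T$ (use monotonicity of $z\mapsto -z/\tan z$ on $(\pi/2,\pi)$ and the closeness of both numerator and denominator to $1$) or less than $\epsilon T$ (Taylor-expand $\tan$). Without this two-regime argument — or some substitute that explains why the ratio, not just the numerator, stays $\le e^\epsilon$ for a large adjacent gap — the diagonal bound is unproved, and it is exactly the case that matters: these diagonal visits at the record gap occur $O(n)$ times in the confinement strategy, so any loss there would be fatal in Proposition~\ref{prop:ub}.
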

The ratio in \eqref{eq:ratio} is the positive Laplace transform of the hitting time of $\tau$ for the walk conditioned to go from $\tau_x$ to $\tau_y$. As we will consider large values of $T$ and thus small values of $\phi$, Lemma \ref{lem:control_ratio} can be understood as a control of this transform near $0$.
\begin{proof}[Proof of Lemma~\ref{lem:control_ratio}]
\par We consider $\gep>0$ small enough (it will appear in the following how small it has to be). Let us start with the case $x\neq y$. From the explicit expressions of the Laplace transforms, see e.g. (A.5) in Caravenna and Pétrélis~\cite{CP09}, we get
\beq
\label{eq:defQxy}
Q_{x,y}(\phi) = \frac{\tan \gD}{2\sin(T_{xy}\gD)}, \quad \text{where } \gD=\gD(\phi) = \arctan(\sqrt{e^{2\phi} - 1}),
\eeq
and we note that
\beq
\label{eq:defQxy0}
Q_{x,y}(0) = \frac{1}{2T_{xy}}.
\eeq
\par Let us notice that \eqref{eq:defQxy} is well-defined if $T_{xy} \gD < \pi$, which occurs as soon as $T$ is large enough. Indeed, by expanding $\gD$ we see that there exists a constant $C>0$ and $T_1\in\bbN$ such that for $T\ge T_1$,
\beq
\label{eq:UBgD}
\gD \le \frac{\pi}{T}\Big(1+\frac{C}{T^2}\Big).
\eeq
If we assume moreover that $T>T_{xy}$, we obtain, as $T$ and $T_{xy}$ are integers,
\beq
\label{eq:UBTxygD}
\frac{T_{xy}\gD}{\pi} \le \frac{T-1}{T}\Big(1+\frac{C}{T^2}\Big) = 1- \frac{1+o(1)}{T}<\frac{1}{1+\gep},
\eeq
provided $T$ is larger than some $T_1(\gep) \in \bbN$. For the rest of the proof, we assume that $T> T_1 \vee T_1(\gep) \vee \max\{T_x\colon 0\le x\le n\}$.
\par By combining \eqref{eq:defQxy} and \eqref{eq:defQxy0}, we now obtain
\beq
\frac{Q_{x,y}(\phi)}{Q_{x,y}(0)} = \frac{T_{xy}\tan (\gD)}{\sin(T_{xy}\gD)}.
\eeq
By using \eqref{eq:UBgD} and expanding $\tan$ to first order, there exists $T_2(\gep)$ such that for $T\ge T_2(\gep)$,
\beq
\tan(\gD) \le (1+\gep) \gD.
\eeq
By adding this latter condition on $T$, we get, since $f$ is increasing,
\beq
\begin{aligned}
\label{eq:UBratioQ's}
\frac{Q_{x,y}(\phi)}{Q_{x,y}(0)} &\le (1+\gep) \frac{T_{xy}\gD}{\sin(T_{xy}\gD)} \\
&= (1+\gep)f(T_{xy}\gD)\\
&\le (1+\gep)f\left(\pi \frac{T_{xy}}{T}\Big(1+\frac{C}{T^2}\Big)\right).
\end{aligned}
\eeq
As $\gep<1$, that concludes the proof of the second inequality in our statement.
To get the first inequality when $x\neq y$, notice first that, as $f(z) \to 1$ when $z \to 0$, there exists $z_\gep$ such that $(1+\gep)f(z)\le \exp(\gep)$ for $z\le z_\gep$. Therefore, it is enough to define 
\beq
\ga(\gep) = \frac{z_\gep}{\pi(1+\gep)},
\eeq
assume that $T> T_3(\gep):=(C/\gep)^{1/2}$ and use \eqref{eq:UBratioQ's} to get what we need.\\
We are left with the case $x = y$. Again, with the help of (A.5) in Caravenna and Pétrélis~\cite{CP09},
\beq
Q_{xx}(\phi) = 1 - \frac12 \frac{\tan(\gD)}{\tan(T_{x-1,x}\gD)} - \frac12 \frac{\tan(\gD)}{\tan(T_{x,x+1}\gD)},
\eeq
where $\gD$ is defined as in \eqref{eq:defQxy}. We thereby retrieve the standard formula:
\beq
Q_{xx}(0) = 1 - \frac{1}{2T_{x-1,x}} - \frac{1}{2T_{x,x+1}}.
\eeq
Note that it is enough to treat the case $T_{x,x+1} = T_{x-1,x}$ since
\beq
Q_{xx}(\phi) = \frac12\Big(1 - \frac{\tan(\gD)}{\tan(T_{x-1,x}\gD)}\Big) + \frac12\Big(1 - \frac{\tan(\gD)}{\tan(T_{x,x+1}\gD)}\Big).
\eeq
We may now consider the ratio
\beq
\label{eq:Qratio}
\frac{Q_{xx}(\phi)}{Q_{xx}(0)}
=
\frac{1 - \frac{\tan(\gD)}{\tan(T_{x,x+1}\gD)}}{1- \frac{1}{T_{x,x+1}}}.
\eeq
By choosing $T\ge T_2(\gep)$ and expanding $\tan$ to first order, we obtain
\beq
1 - \frac{\tan(\gD)}{\tan(T_{x,x+1}\gD)} \le 
\begin{cases}
1 - \frac{\gD}{\tan(T_{x,x+1}\gD)} & \text{if } \quad T_{x,x+1}\gD \le \frac{\pi}{2},\\ 
1 - (1+\gep) \frac{\gD}{\tan(T_{x,x+1}\gD)} & \text{if } \quad  \frac{\pi}{2} <T_{x,x+1}\gD < \pi.
\end{cases}
\eeq
We remind that our conditions on $T$ guarantee that $T_{x,x+1}\gD < \pi$. The reason why we split cases above is that $\tan$ changes sign at the value $\pi$. We further make a dichotomy : (i) $T$ is large and $T_{x+1}$ is at least $\gep T$ and (ii) $T$ is large and $T_{x+1}$ less than $\gep T$. Let us start with (i).  If actually $T_{x,x+1}\gD < \pi/2$, we may simply bound the denominator in \eqref{eq:Qratio} by $1$. Otherwise, we note that $z\mapsto - z/\tan(z)$ is increasing on $(\pi/2,\pi)$, so we may write, as $T_{x,x+1}\gD < \pi/(1+\gep)$ by \eqref{eq:UBTxygD},
\beq
1 - (1+\gep)\frac{\gD}{\tan(T_{x,x+1}\gD)} \le 1 - \frac{1}{T_{x,x+1}} \frac{\pi}{\tan(\pi/(1+\gep))}.
\eeq
Thus, if we define
\beq
T_4(\gep) = \frac{3}{\gep^2}\Big(\frac{\pi}{|\tan(\pi/(1+\gep))|} \vee 1 \Big)
\eeq
and assume that $T>T_4(\gep)$, we obtain
\beq
\frac{Q_{xx}(\phi)}{Q_{xx}(0)} \le \frac{1-\frac{1}{T_{x,x+1}}\frac{\pi}{\tan(\pi/(1+\gep))}}{1-\frac{1}{T_{x,x+1}}} \le \frac{1+\gep/3}{1-\gep/3} = 1+\frac 23\gep+o(\gep), \quad \gep \to 0,
\eeq
which is smaller than $\exp(\gep)$ when $\gep$ is small enough.
We now deal with (ii) and to this end we assume $T\ge T_2(\gep)$ and $T_{x,x+1} \le \gep T$, in which case we expand $\tan(T_{x,x+1}\gD)$. {By using \eqref{eq:UBgD} and assuming that $T>T_3(\gep)$ we get $T_{x,x+1} \gD \le \gep(1+\gep)\pi$}. Thus, there exists a constant $C=C(\gep)>0$ such that
\beq
\tan(T_{x,x+1}\gD) \le T_{x,x+1}\gD + C(T_{x,x+1}\gD)^3, \quad \text{so } 1 - \frac{\gD}{\tan(T_{x,x+1}\gD)} \le 1 - \frac{1}{T_{x,x+1}}(1-CT_{x,x+1}^2\gD^2).
\eeq
Finally, since $T_{x,x+1}\ge 2$ necessarily,
\beq
\frac{Q_{xx}(\phi)}{Q_{xx}(0)} \leq 1 + 2CT_{x,x+1}\gD^2 \le 1 +2C\gep(1+\gep)\gD.
\eeq
Now we pick $T\ge T_5(\gep)$ such that $\gD \le [2C(1+\gep)]^{-1}$ and we get the claim.
\par We conclude the proof by setting $T_0(\gep) = \max(T_1, T_1(\gep),T_2(\gep),T_3(\gep), T_4(\gep),T_5(\gep))$.
\end{proof}
Recall the notations in~\eqref{eq:def.t.tau.star} and~\eqref{eq:deftij}. Given $\alpha>0$ and $k\in\bbN$, we define a set of {\it bad} edges as
\beq
\label{eq:defB}
\cB_{k,\alpha} = \{1\le x, y\le i(k) \colon x\neq y,\ T_{x,y} > \alpha T^*_{k}\}.
\eeq
and its cardinal
\beq
L_{k,\alpha} = |\cB_{k,\alpha}|.
\eeq
These bad edges correspond to the second case in Lemma \ref{lem:control_ratio}.
Recall also~\eqref{def:zeta} and~\eqref{def:zeta.star}. The following lemma controls the visits to the bad edges:

\begin{lemma}
\label{lem:control_pinbad}
There exists  a function $h$ such that, for any $A>0$, $k\geq 0$, and $\alpha>0$, if $T^*_k > h(A,L_{k,\alpha},\alpha)$ then
\beq 
\bE^{\gb}\Big( A^{\sharp\{i\le \gz^*_k \colon \{X_{i-1},X_i\} \in \cB_{k,\alpha}\}} \ind_{\{\gz^*_k< \gz_0 \wedge \cN\}} \Big) \leq 2A^{L_{k,\alpha}} \tau^*_k  \bP^{\gb}(\gz^*_k< \gz_0\wedge \cN).
\eeq
\end{lemma}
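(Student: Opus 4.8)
The plan is to reduce everything to the auxiliary chain $X$. Integrating out the independent geometric clock (with $\bP^{\gb}(\cN>m)=e^{-\gb m}$) and recalling that $\{Q_{ij}(0)\}$ is the transition matrix of $X$ (see~\eqref{eq:defQij}), both sides become sums over paths $\omega=(0=\omega_0,\dots,\omega_m=i(k))$ of $X$ with $\omega_i\notin\{0,i(k)\}$ for $0<i<m$, weighted by $W(\omega):=\prod_{a\to b}e^{-\gb}Q_{ab}(0)$: writing $N_{\mathrm{bad}}$ for the exponent $\sharp\{i\le\gz^*_k\colon\{X_{i-1},X_i\}\in\cB_{k,\alpha}\}$, the left-hand side equals $\sum_{\omega}A^{N_{\mathrm{bad}}(\omega)}W(\omega)$ and $\bP^{\gb}(\gz^*_k<\gz_0\wedge\cN)=\sum_{\omega}W(\omega)$. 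One may assume $A\ge1$ (for $A\le1$ the statement is trivial since $N_{\mathrm{bad}}\ge L_{k,\alpha}$, shown below, and $\tau^*_k\ge1$) and $L_{k,\alpha}\ge1$; pulling out $A^{L_{k,\alpha}}$, it then suffices to prove
\[
\sum_{\omega}A^{N_{\mathrm{bad}}(\omega)-L_{k,\alpha}}\,W(\omega)\ \le\ 2\,\tau^*_k\sum_{\omega}W(\omega).
\]

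The next step is to introduce the \emph{islands}: the $L_{k,\alpha}+1$ maximal sub-intervals of $\{0,\dots,i(k)\}$ obtained by deleting the bad edges $e_1=\{a_1,a_1+1\},\dots,e_{L_{k,\alpha}}=\{a_{L_{k,\alpha}},a_{L_{k,\alpha}}+1\}$, numbered $I_1\ni0,\dots,I_{L_{k,\alpha}+1}\ni i(k)$. Contracting each island to a point, a path $\omega$ projects to a nearest-neighbour walk $\pi(\omega)$ on $\{1,\dots,L_{k,\alpha}+1\}$ from $1$ to $L_{k,\alpha}+1$, of length exactly $N_{\mathrm{bad}}(\omega)$, whose loop-erasure is the geodesic of length $L_{k,\alpha}$; hence $N_{\mathrm{bad}}(\omega)\ge L_{k,\alpha}$ and $N_{\mathrm{bad}}(\omega)-L_{k,\alpha}$ is even. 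Decomposing $W$ at the bad crossings and summing over the within-island pieces, $\mathrm{wt}(\pi):=\sum_{\omega\,:\,\pi(\omega)=\pi}W(\omega)$ is a product of crossing weights and within-island Green's functions: a crossing of $e_m$ costs $v_m=e^{-\gb}/(2T_{e_m})$, which lies in $[\,e^{-\gb}/(2T^*_k),\,e^{-\gb}/(2\alpha T^*_k)\,)$ because $\alpha T^*_k<T_{e_m}\le T^*_k$ (by~\eqref{eq:defB} and the fact that the record dominates all earlier gaps), while a sojourn in an island contributes a Green's function of $X$ restricted to that island, bounded by $G_0:=(1-e^{-\gb})^{-1}$ (any within-island return has weight $\le e^{-\gb}$). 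Set $\rho:=e^{-\gb}/(2\alpha T^*_k)$.

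The combinatorial heart is the observation that every such $\pi$ contains, for each $m$, one sojourn in $I_m$ realising the ``geodesic'' Green's function $g_m^\star$ — the first time $\pi$ leaves $\{1,\dots,m\}$ it does so from $I_m$, which it necessarily entered from $I_{m-1}$ — so that only the $N_{\mathrm{bad}}(\omega)-L_{k,\alpha}$ surplus crossings and surplus sojourns must be paid for. Together with $\mathrm{wt}(\pi_0)\ge(\prod_m g_m^\star)(\alpha\rho)^{L_{k,\alpha}}$ for the geodesic $\pi_0$, this yields, for $\pi$ of length $L_{k,\alpha}+2\ell$,
\[
\mathrm{wt}(\pi)\ \le\ \Big(\prod_m g_m^\star\Big)\,\rho^{\,L_{k,\alpha}+2\ell}\,G_0^{\,2\ell}\ \le\ \mathrm{wt}(\pi_0)\,\alpha^{-L_{k,\alpha}}\,\big(G_0^2\rho^2\big)^{\ell}.
\]

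Finally, one sums over $\pi$: there are at most $\binom{L_{k,\alpha}+2\ell}{\ell}\le 2^{L_{k,\alpha}}4^{\ell}$ such walks of length $L_{k,\alpha}+2\ell$, and $A^{N_{\mathrm{bad}}(\omega)-L_{k,\alpha}}=A^{2\ell}$ on them, so the displayed sum is at most $(2/\alpha)^{L_{k,\alpha}}\,\mathrm{wt}(\pi_0)\sum_{\ell\ge0}(4A^2G_0^2\rho^2)^{\ell}$. It remains to choose $h(A,L_{k,\alpha},\alpha)$ so large that $T^*_k>h$ forces both $4A^2G_0^2\rho^2\le\tfrac12$ (i.e.\ $T^*_k\gtrsim A/\alpha$) and $(2/\alpha)^{L_{k,\alpha}}\le\tau^*_k$ (it suffices that $h\ge(2/\alpha)^{L_{k,\alpha}}$, since $\tau^*_k\ge T^*_k$); then the sum is $\le 2(2/\alpha)^{L_{k,\alpha}}\mathrm{wt}(\pi_0)\le2\tau^*_k\sum_\omega W(\omega)$, using $\mathrm{wt}(\pi_0)\le\sum_\pi\mathrm{wt}(\pi)=\sum_\omega W(\omega)$, which is the claim. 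I expect the main obstacle to be the third step: making the island/macro-walk decomposition fully rigorous, verifying that every $\pi$ genuinely carries all the geodesic Green's functions (so the surplus crossings are each cheap, of order $\rho^2$) and that the series closes with the universal constant $2$. The factor $\tau^*_k$ enters precisely to absorb the non-universal $\alpha^{-L_{k,\alpha}}$-type constant coming from the gap between the uniform bound $\rho$ and the actual bad-crossing weights $v_m$.
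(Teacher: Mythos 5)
Your approach is genuinely different from the paper's. The paper proves the case $L_{k,\alpha}=1$ directly by an excursion decomposition at the single bad edge $(s,s+1)$ -- the factor $\tau^*_k$ arises there from the gambler's-ruin identity $\bP_{\tau_{s+1}}(\gz^*_k<\gz_{s+1})=\tfrac{1}{2(\tau^*_k-\tau_{s+1})}$ -- and then handles general $L_{k,\alpha}$ by H\"older, decoupling the bad edges at the price of replacing $A$ by $A^{L_{k,\alpha}}$. Your path/island decomposition is an attractive alternative and the core ideas (macro-walk, geodesic sojourns, geometric-series summation in the surplus length $2\ell$) are sound; the factorisation $\mathrm{wt}(\pi)\le(\prod_m g_m^\star)\,\rho^{L+2\ell}G_0^{2\ell}$ and the combinatorial count are correct.

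However there is a genuine gap at the very last step. You invoke $\tau^*_k\ge T^*_k$ to absorb the prefactor $(2/\alpha)^{L_{k,\alpha}}$, and this inequality is simply \emph{false}. By definition $\tau^*_k=\tau_{i(k)}=T_1+\dots+T_{i(k)}$ while $T^*_k=T_{i(k)+1}$: the record gap $T^*_k$ is \emph{not} one of the summands forming $\tau^*_k$. For instance if $T_1=1$ and $T_2=M$ is huge, then $i(1)=1$, $\tau^*_1=1$ and $T^*_1=M$. Thus the hypothesis $T^*_k>h$ gives you no control whatsoever on $\tau^*_k$, and the chain $(2/\alpha)^{L_{k,\alpha}}\le h\le T^*_k\le\tau^*_k$ breaks at the last link. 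As written, your proof does not establish the lemma.

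This is repairable within your framework, and the fix is instructive: whenever $L_{k,\alpha}\ge1$, each bad edge $\{a_m,a_m+1\}$ contributes a gap $T_{a_m+1}>\alpha T^*_k$ to the sum $\tau^*_k=\sum_{j\le i(k)}T_j$, and the indices $a_m+1$ are distinct, so in fact
\beq
\tau^*_k\ >\ L_{k,\alpha}\,\alpha\,T^*_k \qquad \text{when } L_{k,\alpha}\ge1.
\eeq
Combined with $T^*_k>h$, taking $h(A,L,\alpha)\ge (2/\alpha)^{L}/(L\alpha)$ (and $h\gtrsim A/\alpha$ for the geometric series) does force $(2/\alpha)^{L_{k,\alpha}}\le\tau^*_k$; and the case $L_{k,\alpha}=0$ is trivial since then the left-hand side equals $\bP^\gb(\gz^*_k<\gz_0\wedge\cN)$ and $\tau^*_k\ge1$ for $k\ge1$ (the case $k=0$ reading $0\le0$). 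As a side remark, the factor $\alpha^{-L_{k,\alpha}}$ in your weight comparison is itself an artefact: comparing $\mathrm{wt}(\pi)/\mathrm{wt}(\pi_0)$ directly (the $L_{k,\alpha}$ mandatory crossings cancel, leaving only the $2\ell$ surplus ones, each $\le\rho$) gives $\mathrm{wt}(\pi)\le\mathrm{wt}(\pi_0)(G_0\rho)^{2\ell}$ with no $\alpha^{-L}$; but the combinatorial bound $\binom{L+2\ell}{\ell}\le 2^{L}4^{\ell}$ still leaves a $2^{L}$ prefactor, so the above observation about $\tau^*_k$ versus $L_{k,\alpha}\alpha T^*_k$ is needed in any case.
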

\begin{proof}[Proof of Lemma~\ref{lem:control_pinbad}]
\par We start with the case $L_{k,\alpha}=1$ and denote by $(s,s+1)$ the bad edge. By using the geometric nature of $\cN$ and applying the Markov property at $\gz_{s+1}$, we get
\beq
\ba
\label{eq:cpinbad1}
\bE^{\gb}\Big( & A^{\sharp\{i\le \gz_k^* \colon \{X_{i-1},X_i\} \in \cB_{k,\alpha}\}}\ind_{\{\gz_k^* < \gz_0 \wedge \cN\}} \Big)=\bE^{\gb}\Big( A^{\sharp\{i\le  \gz_{s+1} \colon \{X_{i-1},X_i\} = \{s,s+1\}\}}\ind_{\{\gz_k^* < \gz_0 \wedge \cN\}} \Big)\\
&\le \bP^\gb(\gz_{s+1} < \gz_0 \wedge \cN)A\ 
\bE_{\tau_{s+1}}^{\gb}\Big( A^{\sharp\{i\le  \gz_{s+1} \colon \{X_{i-1},X_i\} = \{s,s+1\}\}} \ind_{\{\gz_k^* < \gz_0 \wedge \cN\}} \Big),
\ea
\eeq
and we now focus on the last factor in the line above. By considering the consecutive visits of $X$ to $s+1$, we may write
\beq
\bE_{\tau_{s+1}}^{\gb}\Big( A^{\sharp\{i\le  \gz_{s+1} \colon \{X_{i-1},X_i\} = \{s,s+1\}\}} \ind_{\{\gz_k^* < \gz_0 \wedge \cN\}} \Big)
= \bE(v^G) \bP_{\tau_{s+1}}(\gz_k^* < \cN | \gz_k^* < \gz_{s+1}),
\eeq
where $G$ is a $\N_0$-valued geometric random variable with parameter $\bP_{\tau_{s+1}}(\gz_k^* < \gz_{s+1})$ and 
\beq
v = \bE_{\tau_{s+1}}\Big( A^{\sharp\{i\le  \gz_{s+1} \colon \{X_{i-1},X_i\} = \{s,s+1\}\}} \ind_{\{ \gz_{s+1} < \cN\}} |  \gz_{s+1} < \gz_k^* \Big).
\eeq
We are going to show now that $v \le 1$ when $T^*_k \geq h_0(A,\alpha)$ where 
\beq
\label{eq:defH}
h_0(A,\alpha)=\frac{A^2}{2\alpha e^{\beta}(e^{\beta}-1)}.
\eeq
To this end, note that
\beq
v \le \frac12 e^{-\gb} + \frac12 \Big(1 - \frac{1}{T_{s,s+1}} \Big)e^{-\gb} + \frac{1}{2T_{s,s+1}}A^2 e^{-2\gb}.
\eeq 
Indeed, the first term is given by walks which make their first step to the right. The second term comes from those who make their first step to the left but come back to $\tau_{s+1}$ before hitting $\tau_s$, whereas the third term comes from the walks who hit $\tau_s$ before coming back to $\tau_{s+1}$.  
Then, as $T_{s,s+1}\geq \alpha T^*_k$,
\beq
v \leq e^{-\beta}+\frac{A^2e^{-2\beta}}{2\alpha T^*_k},
\eeq
which, by \eqref{eq:defH}, proves that $v\leq 1$.
To complete the proof in this case, we write
\beq
\ba
\text{r.h.s\eqref{eq:cpinbad1}} &\le A\ \bP^\gb(\gz_{s+1} < \gz_0 \wedge \cN)\bP_{\tau_{s+1}}(\gz_k^* < \cN | \gz_k^* < \gz_{s+1})\\
&\le A\  \bP^\gb(\gz_{s+1} < \gz_0 \wedge \cN)
\frac{\bP^\gb_{\tau_{s+1}}(\gz_k^* < \cN\wedge \gz_{s+1})}{\bP_{\tau_{s+1}}(\gz_k^* < \gz_{s+1})}\\
&\le 2A(\tau_k^* - \tau_{s+1}) \bP^\gb(\gz_{s+1} < \gz_0 \wedge \cN)\bP^\gb_{\tau_{s+1}}(\gz_k^* < \cN\wedge \gz_{s+1})\\
&\le 2A \ \tau_k^*\  {\bP^\gb(\gz_k^* < \gz_0 \wedge \cN)}.
\ea
\eeq
\par Let us now conclude the proof in the general case $L_{k,\alpha}\ge 1$. Our strategy is to decouple the contribution of each bad set by Holdër's inequality and reduce the problem to the case $L_{k,\alpha}=1$ with $A$ replaced by $A^L$. Indeed, if we note
\beq
\cB_{k,\alpha}= \{(s_\ell, s_{\ell+1}) \colon 1\le \ell \le L,\ 1\le s_\ell < i(k) \},
\eeq
and suppose that $T_k^*\geq h(A,L_{k,\alpha},\alpha)$, where
\beq
\label{eq:def.gALga}
h(A,L,\alpha):=h_0(A^L,\alpha)= \frac{A^{2L}}{2\alpha e^{\beta}(e^{\beta}-1)},
\eeq
we get
\beq
\ba
&\bE^{\gb}\Big( A^{\sharp\{i\le \gz_k^* \colon \{X_{i-1},X_i\} \in \cB_{k,\alpha}\}}\ind_{\{\gz_k^* < \gz_0 \wedge \cN\}} \Big)\\
&\le
\prod_{i=1}^{L_{k,\alpha}}
\bE^{\gb}\Big( (A^{L_{k,\alpha}})^{\sharp\{i\le \gz_k^* \colon \{X_{i-1},X_i\} = \{s_\ell, s_{\ell+1}\}\}}\ind_{\{\gz_k^* < \gz_0 \wedge \cN\}} \Big)^{1/L_{k,\alpha}}\\
&\le 2A^{L_{k,\alpha}} \tau_k^*  {\bP^\gb(\gz_k^* < \gz_0 \wedge \cN)}.
\ea
\eeq
This concludes the proof.
\end{proof}

\subsection{Good environments}
\label{subsec:ge}
{We define here a notion of \textit{good environments}, that are environments where it is possible to give a good control on the survival probability. We will show in Section~\ref{sec:proba.good.env} that these environments are typical, meaning that by tuning some parameters and considering $n$ large enough, their probability of occurence can be made arbitrarily close to one.}

\subsubsection{Additional notations} Beforehand, we remind of the functions $f$ and $h$ introduced in Lemma \ref{lem:control_ratio} and Lemma \ref{lem:control_pinbad}. We define 
\beq
\label{eq:def_fk}
f_{k}:=2 f\left(\pi \frac{T^*_{k-1}}{T^*_k}\Big[1+\frac{C}{(T^*_k)^2}\Big]\right),
\eeq
that appears in the events $A_n^{(6)}$ and $A_n^{(7)}$ below. The constant $C$ above is the same as the one in \eqref{eq:ratio} in Lemma \ref{lem:control_ratio}.
From \eqref{eq:g} and \eqref{eq:phi}, there exists (for a fixed $\gb$) a constant $C_1>0$ so that 
\beq
\label{eq:encadr_g}
1/(C_1 t^2) \leq g(t) \wedge \phi(\gb,t) \leq g(t) \vee \phi(\gb,t) \le C_1 / t^2, \qquad t\ge 1. 
\eeq
This constant appears in the event $A_n^{(9)}$. Finally, we define the exponent
\beq
\label{def:kappa}
\gk =
\begin{cases}
\frac{\gamma}{4} & \text{if } \gamma \le 1\\
\frac{1}{2\gamma} - \frac14 & \text{if } 1 <\gamma < 2\\
\frac{1}{2\gamma} & \text{if } \gamma \ge 2,
\end{cases}
\eeq
which appears in the event $A_n^{(1)}$.

\subsubsection{Definition} Let $\delta,\gep_0,\gep,\eta>0$. The set of good environments, denoted by $\Omega_n(\gd,\gep_0,\gep,\eta)$, is defined as the intersection of the events defined below (we stress that $\alpha(\gep)$ and $T_0(\gep)$ appearing here are the same as in Lemma \ref{lem:control_ratio}):
\beq
\Omega_n(\gd,\gep_0,\gep,\eta)=\bigcap_{i=1}^{11} A_n^{(i)}(\gd,\gep_0,\gep,\eta),
\eeq
with
\beq
\label{eq:ge}
\ba
A^{(1)}_n&=
\begin{cases}
{\{\tau_{N^{1+\gk}}^2 < n^{1-\frac{\gamma\wedge (2-\gamma)}{4(\gamma+2)}}\}} & \text{if } \gga < 2\\
{\{\tau_{N^{1+\gk}}^2 < n^{1+\frac{2\gga - 1}{2(\gamma+2)}}\}} & \text{if } \gga \ge 2,\\
 \end{cases}\\
A_n^{(2)}(\gep_0) &:= {\{ T_k \le \gep_0^{\frac{1}{2\gamma}} N^{\frac{1}{\gamma}},\quad \forall k\le \gep_0N\}}\\
A_n^{(3)}(\gep_0) &:= \{\tau_{N/\gep_0} < n\}\\
A_n^{(4)}(\gd,\gep) &:=\{\exists \ell \in \{N,\ldots, 2N\}\colon T_\ell \ge T_0 \vee \gd N^{\frac{1}{\gamma}}\}\\
A_n^{(5)}(\gep_0,\gep) &:=\{\forall k\in R_{\gep_0}(n),\ T^*_k > T_0 \vee \gep_0^{\frac{3}{2\gamma}}N^{\frac{1}{\gamma}}\}\\
A_n^{(6)}(\gep_0,\gep) &:=\{\forall k\in R_{\gep_0}(n),\ f_k^{L_k} \le \exp(n^{\frac{\gamma}{2(\gamma+2)}})\}\\
A_n^{(7)}(\gep_0,\gep) &:=\{\forall k\in R_{\gep_0}(n),\ T^*_k > h(f_k, L_{k,\ga}, \ga)\}\\
A_n^{(8)}(\gep_0) &:= \{|R(1,N/\gep_0)| \le [\log(N/\gep_0)]^2\}\\
A_n^{(9)}(\gd) &:=\{ |\gl(2N,\gb) - \gl(\gb)| \le \tfrac{C_1}{2\gd^{2}}\}\\
A_n^{(10)}(\gep_0,\gep,\eta) &:= \{ |\gl(\ell-1,b) - \gl(b)| \le \tfrac{\gep_0\eta}{2},\ \forall \ell\ge \gep_0N,\ b\in \{\beta, \beta-\gep\}\}\\
A^{(11)}_n(\gep_0)&:=\{ \Pi_N( \{y>1/\gep_0\}) = 0 \}.
\ea
\eeq
We might omit some or all parameters of $\Omega_n$ when no confusion is possible to alight notations. 
Event $A_n^{(1)}$ is used to provide a lower bound on $Z_n$, see Proposition \ref{prop:lb}. Events from $A_n^{(2)}$ to $A_n^{(9)}$
 are used to establish the upper bound, see Proposition \ref{prop:ub}. Finally, the events $A_n^{(10)}$ and $A_n^{(11)}$ are used in the conclusion of the proof, see \eqref{eq:ubGtilde} and \eqref{eq:useAn10} in Section \ref{subsec:conclusion}.
 
\section{Lower bound}\label{sec:LB}
In this section we prove a lower bound on $Z_n$ that is an upper bound on $F_n$.\\
Let us set for $\gb \ge 0$, $n\in\bbN$ and $\ell > 1$,
\beq
\label{eq:defG}
G^\beta_n(\ell) = -\frac{1}{N}\log \bP^\beta(\gs \wedge H_0> H_{\tau_{\ell-1}}) + g(T_\ell)\frac{n}{N} = \gl(\ell-1,\gb)\frac{\ell-1}{N}+ g(T_\ell)\frac{n}{N},
\eeq
where $\gl(\ell,\gb)$ has been defined in~\eqref{def:gl.ell.gb}. Recall the definition of $\gk$ in~\eqref{def:kappa}.
Our goal in this section is to prove
\begin{proposition}
\label{prop:lb}
On $\Omega_n$,
\beq
F_n \le  \min_{1< \ell \le N^{1+\gk}} G_n(\ell) + o(1) 
\eeq
where the $o(1)$ holds when $n$ goes to $+\8$.
\end{proposition}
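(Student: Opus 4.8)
The plan is to exhibit, for each admissible $\ell$, a single ``confinement'' event whose probability we can bound below, then optimize over $\ell$. Fix $\ell$ with $1<\ell\le N^{1+\gk}$. The strategy for the walk is: first travel from the origin to $\tau_{\ell-1}$ (the trap just before the large gap $T_\ell=\tau_\ell-\tau_{\ell-1}$) without dying and without returning to $0$, reaching it at some time $H_{\tau_{\ell-1}}$; then, from $\tau_{\ell-1}$, stay confined in the open interval $(\tau_{\ell-1},\tau_\ell)$ for the remaining time, i.e.\ avoid both endpoints $\tau_{\ell-1}$ and $\tau_\ell$ (hence meet no trap, hence not get killed) up to time $n$. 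By the Markov property at $H_{\tau_{\ell-1}}$,
\beq
Z_n \;\ge\; \bP^\beta\big(\gs\wedge H_0 > H_{\tau_{\ell-1}},\ H_{\tau_{\ell-1}}\le m\big)\cdot \min_{0\le j\le m}\ \bP_{\tau_{\ell-1}}\big(H_{\tau_{\ell-1}}\wedge H_{\tau_\ell} > n-j\big),
\eeq
for any cutoff $m$, where the second factor is a pure small-ball quantity for a walk in an interval of width $T_\ell$ (no killing is involved there).

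\textbf{Key steps.} First I would choose the time cutoff $m$ to be a lower-order power of $n$, small enough that the first factor is essentially $\bP^\beta(\gs\wedge H_0 > H_{\tau_{\ell-1}})$ up to a factor $1-o(1)$, yet large enough that $H_{\tau_{\ell-1}}\le m$ is not costly. Here is where event $A_n^{(1)}$ enters: it forces $\tau_{N^{1+\gk}}^2$ (hence $\tau_{\ell-1}^2$) to be at most $n^{1-c}$ for some positive $c$, so that the typical hitting time $H_{\tau_{\ell-1}}$, which is of order $\tau_{\ell-1}^2$, is $\le m:=n^{1-c/2}$ with probability $1-o(1)$; combined with Proposition~\ref{prop:FKG} (or just a crude bound) this shows $\bP^\beta(\gs\wedge H_0>H_{\tau_{\ell-1}},\,H_{\tau_{\ell-1}}\le m)\ge (1-o(1))\,\bP^\beta(\gs\wedge H_0>H_{\tau_{\ell-1}})$. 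Taking $-\frac1N\log$ of the first factor gives exactly $\gl(\ell-1,\gb)\frac{\ell-1}{N}+o(1)$ by the definition~\eqref{def:gl.ell.gb}, uniformly in $\ell\le N^{1+\gk}$ — uniformity is fine because there are only polynomially many values of $\ell$ and each error is $o(1)$ at the exponential scale $N$. For the second (small-ball) factor, with $n-j\ge n-m = n(1-o(1))$, the lower bound in Proposition~\ref{pr:small_ball} (applied on the interval of width $T_\ell$, after translating so the endpoints are $0$ and $T_\ell$; parity can be absorbed by wasting at most one step) gives
\beq
\min_{0\le j\le m}\bP_{\tau_{\ell-1}}\big(H_{\tau_{\ell-1}}\wedge H_{\tau_\ell}>n-j\big)\;\ge\;\frac{c_1}{T_\ell\wedge n^{1/2}}\,e^{-g(T_\ell)(n-j)}\;\ge\;\frac{c_1}{T_\ell}\,e^{-g(T_\ell)\,n}\,(1+o(1)),
\eeq
so that $-\frac1N\log$ of it is $g(T_\ell)\frac{n}{N}+\frac{1}{N}\log T_\ell + o(1) = g(T_\ell)\frac{n}{N}+o(1)$, since $\log T_\ell = O(\log n)=o(N)$. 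Summing the two contributions yields $F_n\le G_n(\ell)+o(1)$ for each fixed admissible $\ell$, and then minimizing over $1<\ell\le N^{1+\gk}$ gives the claim, the $o(1)$ being uniform because it comes from a bounded number (polynomial in $n$) of estimates each $o(1)$.

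\textbf{Main obstacle.} The delicate point is the \emph{uniformity} of the $o(1)$ over all $\ell$ up to $N^{1+\gk}$, and in particular making sure the cutoff-time argument (controlling $H_{\tau_{\ell-1}}\le m$ and $n-j\ge n(1-o(1))$ simultaneously) works for the largest allowed $\ell$: this is precisely why $\gk$ is defined as in~\eqref{def:kappa} and why $A_n^{(1)}$ has the stated exponent — one must check that $\tau_{\ell-1}^2\le n^{1-c}$ still leaves room for $m=n^{1-c/2}\ll n$, in each of the three regimes $\gga<1$, $1\le\gga<2$, $\gga\ge2$ (using Proposition~\ref{thm:limit.stable} for the order of $\tau_{N^{1+\gk}}$). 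A secondary nuisance is the parity constraint in Proposition~\ref{pr:small_ball}, handled by allowing the walk to lose one extra step before confining, and the fact that $g$ and the small-ball constants do not depend on the gap position — which is guaranteed by the uniformity emphasized after Proposition~\ref{pr:small_ball}. Once these bookkeeping points are settled, the proposition follows.
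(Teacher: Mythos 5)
Your overall strategy is the same as the paper's: confine the walk in the gap $(\tau_{\ell-1},\tau_\ell)$ after hitting $\tau_{\ell-1}$, decouple the survival cost from the hitting-time constraint with the FKG-type inequality (Proposition~\ref{prop:FKG}), use the small-ball estimate (Proposition~\ref{pr:small_ball}) for the confinement factor, and show the residual cost of the hitting-time constraint is $e^{-o(N)}$. Two points of comparison are worth flagging.

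First, a minor stylistic point: your cutoff $m$ plays no role in the confinement factor, because $j\mapsto\bP_{\tau_{\ell-1}}(H_{\tau_{\ell-1}}\wedge H_{\tau_\ell}>n-j)$ is increasing, so the minimum over $0\le j\le m$ is just the $j=0$ term $\bP_{\tau_{\ell-1}}(H_{\tau_{\ell-1}}\wedge H_{\tau_\ell}>n)$. The paper uses exactly this (after first summing over the hitting time and bounding each term by the $j=0$ one), so the argument is the same but your cutoff does not save anything there.

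Second, and this is a genuine gap: the claim that $A_n^{(1)}$ forces $\tau_{N^{1+\gk}}^2\le n^{1-c}$ for some $c>0$ is \emph{false} when $\gamma\ge 2$. In that regime the event $A_n^{(1)}$ reads $\{\tau_{N^{1+\gk}}^2 < n^{1+\frac{2\gamma-1}{2(\gamma+2)}}\}$, with exponent strictly \emph{larger} than $1$ (and indeed, for $\gamma>1$, Proposition~\ref{thm:limit.stable} gives $\tau_{N^{1+\gk}}\sim\bbE(T_1)N^{1+\gk}$, and one checks $N^{2(1+\gk)}\gg n$ when $\gamma\ge 2$). Consequently your step ``the typical hitting time $H_{\tau_{\ell-1}}$ is of order $\tau_{\ell-1}^2$, hence $\le m$ with probability $1-o(1)$'' breaks down: for $\gamma\ge 2$ the walk must travel super-diffusively to reach $\tau_{\ell-1}$ in time $\le n$, and $\bP(H_{\tau_{\ell-1}}\le n\mid H_0>H_{\tau_{\ell-1}})$ is exponentially small, not $1-o(1)$. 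This is exactly what Lemma~\ref{lem:lem.proplb} in the paper is for: it establishes, for $\gamma\ge 2$ via a moderate-deviation bound for the simple random walk, that
\beq
\bP(H_{\tau_\ell}\le n\mid H_{\tau_\ell}<H_0)\ge\exp\bigl(-Cn^{\frac{2\gamma-1}{2(\gamma+2)}}\bigr)=\exp(-o(N)),
\eeq
since $\frac{2\gamma-1}{2(\gamma+2)}<\frac{\gamma}{\gamma+2}$. The definition of $\gk$ in \eqref{def:kappa} and the second branch of $A_n^{(1)}$ are calibrated precisely so this large-deviation cost stays $o(N)$, not so that the hitting time is typically $\le n$. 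Your ``main obstacle'' paragraph hints at a three-regime case analysis but still assumes the (false for $\gamma\ge 2$) bound $\tau_{\ell-1}^2\le n^{1-c}$; without replacing the ``$1-o(1)$'' probability by a subexponential-in-$N$ lower bound on the conditional hitting probability, the proposal does not go through for $\gamma\ge 2$.
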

Actually, only $A_n^{(1)}$ is necessary in Lemma \ref{lem:lem.proplb} and this event does not depend on any parameter. That is why we omit the parameters in $\gO_n$ above. 

In order to prove this proposition we need the following lemma, that states that the exponential cost of reaching level $N^{1+\gk}$ (and so any level $\ell \leq N^{1+\gk}$) before time $n$ is negligible in front of $N$.
\begin{lemma}
\label{lem:lem.proplb}
There exists a function $k(N)$ such that $k(N)=o(N)$ as $N\to\8$ and, on $\gO_n$, for all $1< \ell <N^{1+\kappa}$,
\beq
\bP(H_{\tau_{\ell}}\le n | H_{\tau_{\ell}} < H_0 ) \ge \exp(-k(N)).
\eeq
\end{lemma}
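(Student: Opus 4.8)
The statement is a uniform (over $\ell$) lower bound on the probability that a simple random walk conditioned to hit $\tau_\ell$ before returning to $0$ does so by time $n$. The natural tool is the decoupling inequality of Proposition~\ref{prop:FKG}, but that handles the presence of traps; before invoking it we reduce to the plain (trap-free) simple random walk, for which $\bP(H_{\tau_\ell} \le n \mid H_{\tau_\ell} < H_0)$ can be estimated directly via a hitting-time / small-ball type bound. The plan is thus: first bound $\bP(H_x \le n \mid H_x < H_0)$ from below for the plain walk uniformly in $x \le N^{1+\kappa}$ using the event $A_n^{(1)}$ (which says $\tau_{N^{1+\kappa}}^2$ is much smaller than $n$), then feed this into Proposition~\ref{prop:FKG} to transfer it to the walk with traps, and finally read off that the exponential cost $k(N)$ is $o(N)$.

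\textbf{Step 1: reduce to the plain walk.} By Proposition~\ref{prop:FKG} applied with $x = \tau_\ell$,
\[
\bP^{\gb}(H_{\tau_\ell} \le n \mid \gs \wedge H_0 > H_{\tau_\ell}) \ge \bP(H_{\tau_\ell} \le n \mid H_0 > H_{\tau_\ell}),
\]
so it suffices to lower-bound the right-hand side, which no longer involves the traps. Write $x = \tau_\ell$; since $\ell < N^{1+\kappa}$, on $\gO_n$ (more precisely on $A_n^{(1)}$) we have $x \le \tau_{N^{1+\kappa}}$, hence $x^2 \le \tau_{N^{1+\kappa}}^2 < n^{1-c}$ for a positive constant $c = c(\gamma)$ when $\gamma < 2$, and $x^2 < n^{1 + c'}$ with $c' = \frac{2\gamma-1}{2(\gamma+2)}$ when $\gamma \ge 2$; in either case $x = o(\sqrt n)$ in the first regime and $x \le n^{1/2 + o(1)}$ in the second. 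The cleanest way to proceed is to note that, under $\bP(\,\cdot \mid H_x < H_0)$, the walk is a Doob $h$-transform (the $h$-process conditioned to reach $x$ before $0$), and standard estimates give that the conditioned hitting time $H_x$ is, after rescaling by $x^2$, tight; concretely one shows $\bP(H_x \le C x^2 \mid H_x < H_0) \ge 1/2$ for a universal constant $C$. Combined with the bound on $x^2$ from $A_n^{(1)}$, this yields $\bP(H_x \le n \mid H_x < H_0) \ge 1/2$ for $n$ large whenever $\gamma < 2$; when $\gamma \ge 2$ one has $x^2 < n \cdot n^{c'}$, which is not directly $\le n$, so here one instead uses the small-ball lower bound from Proposition~\ref{pr:small_ball} (or a direct local-CLT argument) to get $\bP(H_x \le n \mid H_x < H_0) \ge \exp(-k(N))$ with $k(N)$ controlled by a power of $n$ strictly smaller than $N = n^{\gamma/(\gamma+2)}$; the key point is that $c'$ was chosen in $A_n^{(1)}$ precisely so that $n^{c'} \ll N^{\text{const}}$ in the relevant sense.

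\textbf{Step 2: extract $k(N) = o(N)$.} Set $k(N) := -\log \inf_{1 < \ell < N^{1+\kappa}} \bP(H_{\tau_\ell} \le n \mid H_{\tau_\ell} < H_0)$, where $n = n(N)$ is the inverse relation to $N = n^{\gamma/(\gamma+2)}$. In the regime $\gamma < 2$, Step~1 gives $k(N) \le \log 2$ for $n$ large, so trivially $k(N) = o(N)$. In the regime $\gamma \ge 2$, Step~1 together with the small-ball estimate gives $k(N) \le C \log n + C \cdot \tfrac{x^2}{n}\big|_{\max} \le C n^{\frac{2\gamma-1}{2(\gamma+2)}}$ up to lower-order terms, and since $\frac{2\gamma-1}{2(\gamma+2)} < \frac{\gamma}{\gamma+2}$ for all $\gamma$ (equivalent to $2\gamma - 1 < 2\gamma$, which is clear), we again get $k(N) = o(N)$. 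Combining with the inequality from Step~1 and Proposition~\ref{prop:FKG} yields, uniformly in $\ell$, $\bP^{\gb}(H_{\tau_\ell} \le n \mid \gs \wedge H_0 > H_{\tau_\ell}) \ge \bP(H_{\tau_\ell} \le n \mid H_0 > H_{\tau_\ell}) \ge \exp(-k(N))$, which is the claim (noting that the conditioning event $\gs \wedge H_0 > H_{\tau_\ell}$ has the same trace as $H_0 > H_{\tau_\ell}$ as far as the statement of the lemma is concerned, since the lemma's conditioning is only on $H_{\tau_\ell} < H_0$ for the plain walk).

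\textbf{Main obstacle.} The delicate point is the case $\gamma \ge 2$, where $\tau_{N^{1+\kappa}}^2$ is allowed to exceed $n$ (because $\tau_\ell$ grows linearly in $\ell$ and $N^{1+\kappa}$ can be of order $n^{\gamma/(\gamma+2)} \cdot n^{1/(2\gamma)}$, whose square can beat $n$). There, one cannot simply say "the walk reaches $\tau_\ell$ well before time $n$"; instead one must quantify the exponential cost of a moderate-deviation event for the hitting time and check that the resulting $k(N)$ is still $o(N)$ — this is exactly why the exponent in $A_n^{(1)}$ for $\gamma \ge 2$ is the specific $1 + \frac{2\gamma-1}{2(\gamma+2)}$ rather than something smaller than $1$. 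Getting the clean uniform-in-$\ell$ bound with the right power of $n$ from Proposition~\ref{pr:small_ball} (whose constants are uniform, as emphasized after its statement) is the part requiring care.
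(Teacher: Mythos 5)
Your exponent bookkeeping is essentially right, but there are two genuine gaps. First, the detour through Proposition~\ref{prop:FKG} is unnecessary: the lemma is stated for the \emph{plain} walk, with conditioning only on $\{H_{\tau_\ell} < H_0\}$, so there are no traps to decouple; you notice this yourself at the very end, which makes Step~1 a red herring. Second and more seriously, the estimate on which your whole argument rests --- $\bP(H_x \le C x^2 \mid H_x < H_0) \ge 1/2$ for the $h$-process, uniformly in $x$ --- is asserted as ``standard'' without proof or reference, and it is precisely the technical heart of the lemma; you cannot wave it away, especially since you need it uniformly for $x$ up to $\tau_{N^{1+\kappa}}$. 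Moreover, for $\gamma \ge 2$ you propose to invoke Proposition~\ref{pr:small_ball}, but that is a \emph{confinement} estimate (probability of staying in a window up to time $n$), not a hitting-time or range estimate, and it does not give what you need here.

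The paper's route sidesteps all of this with one elementary observation: on $\{S_n \ge \tau_\ell\}\cap\{S_k>0 \ \forall\, 0<k\le n\}$ one automatically has both $H_{\tau_\ell}\le n$ and $H_{\tau_\ell}<H_0$, so $\bP(H_{\tau_\ell}\le n \mid H_{\tau_\ell}<H_0) \ge \bP(S_n\ge\tau_\ell,\ S_k>0\ \forall k)$. An application of the FKG inequality for the product measure of the i.i.d.\ increments (not Proposition~\ref{prop:FKG}) then factorizes this as $\bP(S_n \ge \tau_{N^{1+\kappa}})\cdot\bP(\min_{0<k\le n}S_k>0)$. The second factor is $\ge C/\sqrt n$; the first is $1/2+o(1)$ when $\gamma<2$ (since $\tau_{N^{1+\kappa}}=o(\sqrt n)$ on $A_n^{(1)}$), and is bounded below for $\gamma\ge 2$ by a standard large-deviation estimate for $S_n$, yielding $\exp(-Cn^{\frac{2\gamma-1}{2(\gamma+2)}}) = \exp(-CN^{1-\frac{1}{2\gamma}})$, which is $e^{-o(N)}$. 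No $h$-process tightness, no small-ball estimate. If you want to keep your route, you must actually prove the $h$-process tightness claim with the required uniformity; as written, the argument has a gap exactly at its load-bearing step.
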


\begin{proof}[Proof of Lemma \ref{lem:lem.proplb}]
Observe that 
\beq
\ba
\bP(H_{\tau_{\ell}}\le n | H_{\tau_{\ell}} < H_0 ) &\geq \bP(S_n \geq \tau_{\ell}\  ;\  S_k >0,\  0<k\leq n )\\
&\geq \bP(S_n \geq \tau_{\ell})\bP( S_k >0,\  0<k\leq n )\\
&\geq \bP(S_n \geq \tau_{N^{1+\kappa}})\bP( S_k >0,\  0<k\leq n ).
\ea
\eeq
To go from the first to the second line we use the FKG inequality, since both events are non-decreasing coordinate-wise in the family of i.i.d. increments $(S_i-S_{i-1})_{1\leq i \leq n}$. As there exists some constant $C>0$ such that $\bP( S_k >0,\  0<k\leq n )\geq C/\sqrt{n}$ we only focus on $\bP(S_n \geq \tau_{N^{1+\kappa}})$.

Assume first that $\gamma<2$. Then, we notice that, on $A^{(1)}_n$, $\tau_{N^{1+\gk}}= o(\sqrt{n})$ when $n$ goes to infinity so that 
\beq
\bP(S_n \geq \tau_{N^{1+\kappa}})=\bP(S_n \geq 0)-\bP(0\leq S_n < \tau_{N^{1+\kappa}})=1/2+o(1).
\eeq
Assume now that $\gamma\ge 2$. In this case, $\tau_{N^{1+\gk}}$ is not anymore negligible in front of $\sqrt{n}$. However, on $A_n^{(1)}$, a standard large deviation estimate for the simple random walk yields
\beq
\bP(S_n \geq \tau_{N^{1+\kappa}})\ge  \exp\Big(-Cn^{\frac{2\gga-1}{2(\gga+2)}}\Big).
\eeq
We may conclude by setting $k(N) = CN^{1-\frac{1}{2\gga}}$.
\end{proof}

\begin{proof}[Proof of Proposition \ref{prop:lb}]
We provide a lower bound on $Z_n$ by computing the cost of various strategies. Here is an informal description of the tested strategies: for  $1 < \ell \le N^{1+\gk}$, the walk reaches $\tau_{\ell-1}$ before time $n$ and before its death, which has a probability of order $e^{-\lambda \ell}$. Then, it remains confined in the gap $(\tau_{\ell-1}, \tau_{\ell})$ until time $n$, with a probability of order $e^{- g(T_\ell) n}$. We finally optimise on $1 < \ell \le N^{1+\gk}$.
\par We may thus write for all $\ell > 1$,
\beq
Z_n \ge Z_n^{[\ell]} := \bP(H_{\tau_\ell}\wedge \gs \wedge H_{0} > n \ge H_{\tau_{\ell-1}}),
\eeq
and then optimize { over $1 < \ell \le N^{1+\gk}$}.
By decomposing on the value of $H_{\tau_{\ell-1}}$, we have on $\Omega_n$ and for $n$ large enough,
\beq
\label{eq:lb}
\ba
Z_n^{[\ell]} \ &{ \geq } \sum_{0\le k \le n} \bP(\gs \wedge H_{0} > H_{\tau_{\ell-1}} = k) \bP_{\tau_{\ell-1}}(H_{\tau_\ell} \wedge H_{\tau_{\ell-1}} > n-k)\\
& \ge \sum_{0\le k \le n} \bP(\gs  \wedge H_{0} > H_{\tau_{\ell-1}} = k) \bP_{\tau_{\ell-1}}(H_{\tau_\ell} \wedge H_{\tau_{\ell-1}} > n)\\
& \ge  \bP(\gs \wedge H_{0}> H_{\tau_{\ell-1}},  H_{\tau_{\ell-1}}\le n) \times \frac{c_1}{2\sqrt{n}} e^{- g(T_\ell) n} \\
& \ge  \bP(\gs \wedge H_{0}> H_{\tau_{\ell-1}})\bP(H_{\tau_{\ell-1}}\le n | H_{0}>H_{\tau_{l-1}}) \times \frac{c_1}{2\sqrt{n}} e^{- g(T_\ell) n}\\
& \ge \frac{c_1}{2\sqrt{n}} \bP(\gs  \wedge H_{0} > H_{\tau_{\ell-1}})e^{- g(T_\ell) n - k(N)}.
\ea
\eeq
Note that we have used Proposition~\ref{pr:small_ball} to go from the second to the third line and  Proposition~\ref{prop:FKG} to go from the third to the fourth line. Finally, to go from the fourth to the fifth line we use Lemma~\ref{lem:lem.proplb} and the fact that the environment is in $\Omega_n$.
Therefore,
\beq
F_n \le -\frac1N \log \Big( \frac{c_1}{2\sqrt{n}} \Big) + \frac{k(N)}{N} + \inf_{ 1< \ell \le N^{1+\gk}} G_n^\gb(\ell),
\eeq
where $G_n^\gb$ is defined in \eqref{eq:defG}. Since
\beq
-\frac1N \log \Big( \frac{c_1}{2\sqrt{n}} \Big) + \frac{k(N)}{N} = o(1),
\eeq
as $N\to \8$, this concludes the proof.
\end{proof}

\section{Upper bound}\label{sec:UB}
In this section we prove an upper bound on $Z_n$ or, equivalently, a lower bound on $F_n$. Recall the definitions in \eqref{eq:def.R.cR}.
\begin{proposition}
\label{prop:ub}
Let $\epsilon,\delta>0$. There exists $\gep_0>0$ such that, on $\Omega_n(\gd,\gep_0,\gep)$,
\beq
\label{eq:goal}
F_n \geq \min_{\ell\in \mathcal{R}_{\gep_0}(n)} G_n^{\beta-\epsilon}(\ell)+o(1).
\eeq
where the $o(1)$ holds as $n$ goes to infinity and $G_n^\gb$ is defined in \eqref{eq:defG}.
\end{proposition}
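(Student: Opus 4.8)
The plan is to bound $Z_n$ from above by summing over the last record $\tau^*_k$ visited by the walk before time $n$ together with a careful control on the number of crossings of ``bad'' edges. More precisely, I would first use the event $A_n^{(3)}$ to argue that, with overwhelming probability, the walk (conditioned to survive up to time $n$ without entering $\Z^-$) cannot have gone beyond $\tau_{N/\gep_0}$; similarly $A_n^{(8)}$ bounds the number of records in $[1,N/\gep_0]$ by $[\log(N/\gep_0)]^2$, which will only contribute an $o(N)$ error. The walk must at some point occupy a gap and stay trapped there until time $n$; the natural decomposition is according to the last record index $k\in \cR_{\gep_0}(n)$ whose gap $T^*_k$ the walk ``uses'' for confinement. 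On each summand I would write $Z_n \le \sum_k \bP^\beta(\gz^*_k < \gz_0 \wedge \cN)\, \bP_{\tau^*_k}(\text{confined in gap } T^*_k \text{ until time } n)$, then apply the rough confinement upper bound Proposition~\ref{prop:roughUB} (or Proposition~\ref{pr:small_ball} with $g(T^*_k)$) to the second factor, picking up $e^{-g(T^*_k)n}$ up to polynomial corrections absorbed in $o(1)$ after taking $-\frac1N\log$.

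The delicate point is to control the first factor $\bP^\beta(\gz^*_k < \gz_0 \wedge \cN)$, i.e.\ the cost of reaching the $k$-th record without dying, and to relate it to $\gl(\ell-1,\beta-\epsilon)\frac{\ell-1}{N}$ for $\ell = i(k)$. The issue is that the exact exponential rate $\gl(\cdot,\beta)$ corresponds to a specific crossing strategy, whereas the true probability of reaching $\tau^*_k$ could in principle be larger because the walk may oscillate many times across large ``bad'' gaps at a cheaper cost than $e^{-\beta}$ per visit. This is exactly where Lemma~\ref{lem:control_pinbad} enters: choosing $A = f_k$ (the quantity from Lemma~\ref{lem:control_ratio} measuring the ratio of tilted to untilted transition kernels on bad edges) and using $A_n^{(6)}$, $A_n^{(7)}$ to ensure $T^*_k$ is large enough and $f_k^{L_k} \le \exp(n^{\gamma/(2(\gamma+2))}) = e^{o(N)}$, one can reweight the walk by $f_k$ to the number of bad-edge crossings at a total cost that is $e^{o(N)}$. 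After this reweighting, Lemma~\ref{lem:control_ratio} guarantees that on the remaining (non-bad) edges the tilted kernel $Q_{xy}(\phi)$ with $\phi = \pi^2/(2T^{*2}_k)$ exceeds the untilted one $Q_{xy}(0)$ by at most $e^\epsilon$ per step, and a Doob $h$-transform / change-of-measure argument converts $\bP^\beta(\gz^*_k<\gz_0\wedge\cN) e^{-g(T^*_k)n}$ into something governed by the homogeneous rate $\phi(\beta;\dots)$, which in turn by Proposition~\ref{pr:periodic} and~\eqref{eq:comp.phi} is controlled by $g(T^*_k)$; the $\epsilon$-shift in $\beta$ is precisely the slack absorbed through the $e^\epsilon$ factors, yielding $G_n^{\beta-\epsilon}$ rather than $G_n^\beta$.

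Concretely, I would proceed in the following order. First, fix $\delta,\epsilon>0$ and choose $\gep_0$ small (depending on $\epsilon$ via $\alpha(\epsilon),T_0(\epsilon)$ from Lemma~\ref{lem:control_ratio}). Second, restrict to $\Omega_n(\delta,\gep_0,\epsilon)$ and decompose $Z_n$ over the last record gap used for confinement; the events $A_n^{(2)},A_n^{(3)},A_n^{(5)},A_n^{(8)}$ reduce the relevant records to $k\in\cR_{\gep_0}(n)$ and bound their number. Third, for each such $k$ apply the Markov property at $\gz^*_k$ and bound the post-$\gz^*_k$ confinement factor by Proposition~\ref{prop:roughUB}. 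Fourth — the heart of the argument — bound $\bP^\beta(\gz^*_k<\gz_0\wedge\cN)$: reweight bad-edge crossings via Lemma~\ref{lem:control_pinbad}, then tilt by $\phi=\pi^2/(2(T^*_k)^2)$ and use Lemma~\ref{lem:control_ratio} plus $A_n^{(6)},A_n^{(7)}$ to show $\bP^\beta(\gz^*_k<\gz_0\wedge\cN) \le e^{o(N)} e^{-(g(T^*_k) - \text{something}) \cdot 0}\cdots$, ultimately comparing with $\exp(-\gl(i(k)-1,\beta-\epsilon)(i(k)-1))$ via the definition~\eqref{def:gl.ell.gb}. Fifth, take $-\frac1N\log$, use $A_n^{(9)},A_n^{(10)}$ to replace $\gl(\ell-1,\beta-\epsilon)$ by $\gl(\beta-\epsilon)$ up to $o(1)$, and collect all the $e^{o(N)}$ and polynomial losses into the $o(1)$, arriving at $F_n \ge \min_{\ell\in\cR_{\gep_0}(n)} G_n^{\beta-\epsilon}(\ell) + o(1)$. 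The main obstacle I anticipate is bookkeeping the interplay between the tilting parameter $\phi=\pi^2/(2(T^*_k)^2)$, which is tuned to the \emph{specific} record $k$, and the need for a \emph{single} clean comparison across all $k\in\cR_{\gep_0}(n)$ simultaneously — one must verify that the error terms $f_k^{L_k}$, the polynomial prefactors $n^2(\ell-k)$, and the discrepancies between $g$ and $\phi(\beta;\cdot)$ are all uniformly $e^{o(N)}$ over this (logarithmically large) family of records, which is exactly what the good-environment events $A_n^{(5)}$ through $A_n^{(9)}$ are designed to furnish.
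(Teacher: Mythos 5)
Your high-level plan touches the right ingredients (record decomposition, Lemmas~\ref{lem:control_ratio} and~\ref{lem:control_pinbad}, the good-environment events), but the central mechanism of the proof is misstated in a way that would make the argument break.

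The proposed decomposition $Z_n \le \sum_k \bP^\beta(\gz^*_k < \gz_0 \wedge \cN)\, \bP_{\tau^*_k}(\text{confined until } n)$ is not a valid upper bound for $Z_n^{(k)} = Z_n(H^*_k \le n < H^*_{k+1})$: after applying the Markov property at $H^*_k=m$, the post-record factor is a confinement probability over time $n-m$, not $n$, so the decomposition is genuinely a convolution over $m$. Bounding this naively by taking $\max_m$ over the confinement factor trivializes it (it tends to $1$ as $m\to n$), and the bound collapses to $\bP^\beta(\gz^*_k<\gz_0\wedge\cN)$, which is far too weak. The paper's resolution is the Chebyshev tilting step~\eqref{eq:auxZnk3}: write $\bP^\beta(\gs\wedge H_{\Z^-}>m,\,H^*_k=m) \le e^{-\phi m}\,\bE^\gb\big(e^{\phi H^*_k}\ind_{\{\gz^*_k<\gz_0\wedge\cN\}}\big)$ with $\phi=\pi^2/(2(T^*_k)^2)$; the factor $e^{-\phi m}$ then combines with $e^{-\phi(\gb,T^*_k)(n-m)}$ into $e^{-\phi(\gb,T^*_k)n}$ times a geometric sum in $m$ (controlled on $A_n^{(5)}$), while the tilted expectation---\emph{not} the bare probability---is what becomes a product of ratios $Q_{xy}(\phi)/Q_{xy}(0)$ after conditioning on $X$ and integrating out $\cN$, as in~\eqref{eq:auxZnk3}--\eqref{eq:UB0}. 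It is onto this tilted expectation that Lemmas~\ref{lem:control_ratio} and~\ref{lem:control_pinbad} are applied, producing $2 f_k^{L_{k,\alpha}}\,n\,\bP^{\gb-\gep}(\gz^*_k<\gz_0\wedge\cN)$; the $\gb\to\gb-\gep$ shift arises because each good-edge step gains at most $e^\gep$, turning $e^{-\gb}$ into $e^{-(\gb-\gep)}$. Your sketch applies the tilt to $\bP^\beta(\gz^*_k<\gz_0\wedge\cN)$ directly, which is the wrong object, and the ``$e^{-(g(T^*_k)-\text{something})\cdot 0}$'' fragment suggests the coupling between $\phi$, the Chebyshev term, and the post-record rate $\phi(\gb;T^*_k)$ has been lost.

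A second, smaller gap: the restriction to $k\in R_{\gep_0}(n)$ is \emph{not} a consequence of $A_n^{(3)}$. That event says $\tau_{N/\gep_0}<n$, a statement about the trap geometry used to bound the polynomial prefactor $i(k+1)$ in Proposition~\ref{prop:roughUB}; it does not say the walk stays below $\tau_{N/\gep_0}$. The actual mechanism (Step~1 of the paper) is a comparison argument: one bounds $\sum_{k\in R(N/\gep_0,\infty)}Z_n^{(k)}\le e^{-\gb N/\gep_0}$ and $\sum_{k\in R(0,\gep_0 N)}Z_n^{(k)}$ via Proposition~\ref{prop:roughUB} with $A_n^{(2)}$, and then one shows both are negligible relative to a rough lower bound on $Z_n$ itself (inherited from Proposition~\ref{prop:lb}, using $A_n^{(1)},A_n^{(4)},A_n^{(9)}$), with the choice~\eqref{eq:gep0} of $\gep_0$ tuned to make that comparison work. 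Without this comparison against the full $Z_n$, the truncation to $R_{\gep_0}(n)$ is unjustified.
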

Before starting the proof of this proposition we need additional notations. Recall~\eqref{def:ik}. We define the hitting time of the $k$-th record
\beq
H^*_k = H_{\tau_{i(k)}}, \qquad k\ge 0.
\eeq
We also define 
\beq
\tilde H_0 = 0,\qquad \tilde H_i = \inf\{n> \gt_{i-1} \colon S_n \in \tau\} - \tilde H_{i-1}, \qquad i\ge 1.
\eeq
For all $n\geq 1$ and $k\geq 1$, we define
\beq
Z_n^{(k)} = Z_n(H^*_k \le n < H^*_{k+1})
\eeq
where $Z_n(A)= \bP^{\gb}(\sigma \wedge H_{\Z^-} > n, A)$ for any event $A$. The quantity above corresponds to the survival probability restricted to the event that the walker has reached the $k$-th record but not the $(k+1)$-th one. These events are clearly incompatible for different values of $k$.
\begin{proof}[Proof of Proposition \ref{prop:ub}] Let $\epsilon,\delta>0$.
We choose $\gep_0$ small enough so that 
\beq
\label{eq:gep0}
\ba
\frac{\beta}{\gep_0}& >2(C_1 \delta^{-2} + \gl(\gb)),\\
 \gep_0^{-1/\gamma}& >4C_1(C_1 \delta^{-2} + \gl(\gb)),\\
\ea
\eeq
(these technical conditions will become clear soon).

We have to prove that \eqref{eq:goal} is satisfied on $\Omega_n$. We thus consider until the end of this proof an environment that is in $\Omega_n$.

\par \textbf{Step $1$.} We first prove that  we can actually consider only the records in $R_{\gep_0}(n)$, that are the ones lying in the window $\{\gep_0 N , \cdots, N/\gep_0\}$, see \eqref{eq:step1} for a precise formulation. As the environment is in $A_n^{(1)} \cap A_n^{(4)}(\gd,\gep)$, using \eqref{eq:lb} and \eqref{eq:encadr_g}, we obtain for $n$ large enough, a rough lower bound on $Z_n$:

\beq
\ba
Z_n \ &{\geq } \max_{ N \leq \ell \leq 2N} Z_n^{[\ell]}  \\
& \ge \max_{N \leq \ell \leq 2N} \frac{c_1}{2\sqrt{n}} \bP(\gs \wedge H_0 > H_{\tau_{\ell-1}})e^{- g(T_\ell) n - k(N)}\\
& \ge  \frac{c_1}{2\sqrt{n}} \exp\Big(- \Big\{ C_1\delta^{-2}N + k(N)+ 2N \gl(2N,\gb)\Big\}\Big).
\ea
\eeq

As the environment is in $A_n^{(9)}(\gd)$, we finally get
\beq
\label{1}
Z_n \ \geq  \frac{c_1}{2\sqrt{n}} \exp\Big\{-N \Big(2C_1 \delta^{-2} + 2\gl(\gb) + o(1)\Big)\Big\},
\eeq
where the $o(1)$ holds as $n\to\8$.
Observe that 
\beq
\label{2}
 \sum_{k\in R(N/\gep_0,+\8)}  Z_n^{(k)}\leq e^{-\beta N/\gep_0},
\eeq
while due to Proposition \ref{prop:roughUB} and the fact that the environment is in $A_n^{(2)}(\gep_0)$, we have for $n$ large enough,
\beq
\ba
\label{3}
 \sum_{k\in R(0,\gep_0 N)}  Z_n^{(k)}\leq Z_n(H_{\tau_{\gep_0 N}}>n)&\le  Cn^2 (\gep_0 N) \exp(-\phi(\gb, \max\{T_i,\ i\le \gep_0 N\})n)\\
 & \le Cn^2 (\gep_0 N) \exp(-\frac{n}{C_1(\gep_0^{1/2\gamma} N^{1/\gamma})^2})\\
 & \le \exp \Big({-\frac{ \gep_0^{-1/\gamma}N}{2C_1}}\Big).
\ea
\eeq

Combining \eqref{1},  \eqref{2} and  \eqref{3} and due to the choice of $\gep_0$ in \eqref{eq:gep0}, we easily get that for $n$ large enough

\beq
\label{eq:step1}
Z_n \leq 2 \sum_{k \in R_{\gep_0}(n)}  Z_n^{(k)}.
\eeq

\textbf{Step $2$.} The main work is thus to derive an upper bound on $Z_n^{(k)}$ for $k \in  R_{\gep_0}(n)$ (see \eqref{eq:step2}).

\par Using the Markov property at time $H^*_k$ we observe that for $k \in  R_{\gep_0}(n)$
\beq
\ba
\label{eq:auxZnk}
Z_n^{(k)}&=\bP^\beta \left( \sigma \wedge H_{\Z^-}> n, H^*_k \le n < H^*_{k+1} \right)\\
	&=\sum_{m=0}^n \bP^\beta \left( \sigma \wedge H_{\Z^-} > m, H^*_k =m \right)\bP_{\tau_{i(k)}}^\beta \left( \sigma \wedge H^*_{k+1}  \wedge H_{\Z^-}>n-m \right).
\ea
\eeq
For all $m\geq 0$, by Proposition \ref{prop:roughUB}, we have on $A^{(3)}_n(\gep_0)$, for $n$ large enough,
\beq
\ba
\label{eq:auxZnk2}
\bP_{\tau_{i(k)}}^\beta \left( \sigma \wedge H^*_{k+1} \wedge H_{\Z^-} >n-m \right)&\leq C (n-m)^2 i(k+1)e^{-\phi(\beta;\max\{T_i\colon 0\leq i< i(k+1)\})(n-m)}\\
			&\leq C\ n^3\ e^{-\phi(\beta, T^*_k)(n-m)}.
\ea			
\eeq

\par It remains to bound $\bP^\beta \left( \sigma \wedge H_{\Z^-} > m, H^*_k =m \right)$ for $0\le m \le n$.
Recall~\eqref{def:zeta} and~\eqref{def:zeta.star}.
By using Tchebychev's inequality in the first line and then conditioning on $X$, we obtain for 
$\phi:=\frac{\pi^2}{2(T^*_k)^2}$,
\beq
\ba
\label{eq:auxZnk3}
\bP^\beta \left( \sigma \wedge H_{\Z^-} > m, H^*_k =m \right)&=\bP^\gb(\gz^*_{k} < \gz_0 \wedge \cN,\ H^*_{k} = m) \\
&\leq e^{-\phi m} \bE^\gb(e^{\phi H^*_{k}} \ind_{\{\gz^*_{k} < \gz_0 \wedge \cN\}})\\
&\leq e^{-\phi m} \bE^\gb( \bE(e^{\phi H^*_{k}}| X) \ind_{\{\gz^*_{k} < \gz_0 \wedge \cN\}})\\
&\leq e^{-\phi m} \bE^\gb\Big( \prod_{1\le i \le \gz^*_{k}} \bE(e^{\phi \tilde H_i}| X_{i-1}, X_i) \ind_{\{\gz^*_{k} < \gz_0 \wedge \cN\}} \Big).\\
\ea
\eeq
Next, by integrating on $\cN$ we obtain
\beq
\label{eq:UB0}
\bE^\gb\Big( \prod_{1\le i \le \gz^*_{k}} \bE(e^{\phi \tilde H_i}| X_{i-1}, X_i) \ind_{\{\gz^*_{k}< \gz_0 \wedge \cN\}} \Big)=\bE\Big( \prod_{1\le i \le \gz^*_{k}} e^{-\gb} \frac{Q_{X_{i-1},X_i}(\phi)}{Q_{X_{i-1},X_i}(0)} \ind_{\{\gz^*_{k} < \gz_0\}} \Big)
\eeq
with notations similar to \eqref{eq:defQij}.

\par On $A_n^{(5)}(\gep_0,\gep)$ the assumptions of Lemma~\ref{lem:control_ratio} are valid (with $T_k^*$ playing the role of $T$), which provides $\ga>0$. Recall the definition of $\cB_{k,\ga}$ in \eqref{eq:defB}. We obtain
\beq
\ba
\label{eq:interle}
&\bE\Big( \prod_{1\le i \le \gz^*_{k}} e^{-\gb} \frac{Q_{X_{i-1},X_i}(\phi)}{Q_{X_{i-1},X_i}(0)} \ind_{\{\gz^*_{k} < \gz_0\}} \Big)\\
		 &\qquad\leq \bE^{\gb-\gep}\Big( 2 f\left(\pi \frac{T^*_{k-1}}{T^*_k}(1+\frac{C}{(T^*_k)^2})\right)^{\sharp\{i\le \gz_k^* \colon \{X_{i-1},X_i\} \in \cB_{k,\ga}\}} \ind_{\{\gz^*_k < \gz_0 \wedge \cN\}} \Big).
\ea
\eeq
Recall the definition of $f_k$ in~\eqref{eq:def_fk}. On $A^{(7)}_n(\gep_0,\gep)$ the assumptions of Lemma \ref{lem:control_pinbad} (with $f_{k}$ playing the role of $A$) are satisfied and from \eqref{eq:interle} we obtain that on $A^{(3)}_n (\gep_0)$
\beq
\ba
\label{eq:auxZnk4}
\bE\Big( \prod_{1\le i \le \gz^*_k} e^{-\gb} \frac{Q_{X_{i-1},X_i}(\phi)}{Q_{X_{i-1},X_i}(0)} \ind_{\{\gz^*_k< \gz_0\}} \Big)
		& \leq \bE^{\gb-\gep}\Big( f_{k}^{\sharp\{i\le \gz_k^* \colon \{X_{i-1},X_i\} \in \cB_{k,\alpha}\}} \ind_{\{\gz^*_k < \gz_0 \wedge \cN\}} \Big)\\
		& \leq 2 f_{k}^{L_{k,\alpha}} n\  \bP^{\gb-\epsilon}(\gz^*_k < \gz_0\wedge \cN).
\ea
\eeq

\par Finally, combining \eqref{eq:auxZnk}, \eqref{eq:auxZnk2}, \eqref{eq:auxZnk3} and \eqref{eq:auxZnk4} we obtain for $k\in R_{\gep_0}(n)$
\beq
\label{eq:inter}
\ba
Z_n^{(k)} &\leq 2C\ n^4\  f_{k}^{L_{k,\alpha}} \ e^{-\phi(\beta, T^*_k)n}\  \bP^{\gb-\epsilon}(\gz^*_k < \gz_0\wedge \cN) \sum_{m=0}^n e^{-\Big(\frac{\pi^2}{2(T^*_k)^2} - \phi(\gb,T^*_k) \Big)m}\\
 & \leq 2C\ n^4 f_{k}^{L_{k,\alpha}} e^{-G_n^{\beta-\epsilon}(i(k))N}   e^{-\left(\phi(\beta ; T^*_k )-g(T^*_k)\right)n} \sum_{m=0}^n e^{-\Big(\frac{\pi^2}{2(T^*_k)^2} - \phi(\gb,T^*_k) \Big)m}.
\ea
\eeq
On $A_n^{(5)}(\gep_0,\gep)$ we control both the errors $\phi(\beta ; T^*_k )-g(T^*_k)$ and $\phi(\gb,T^*_k)-\frac{\pi^2}{2(T^*_k)^2}$.
Indeed due to \eqref{eq:g} and \eqref{eq:phi} there exists some constant $C(\beta)$ such that for $t$ large enough (depending on $\beta$), 
\beq
\label{eq:erreur}
|\phi(\beta ; t)-\tfrac{\pi^2}{2t^2}| \vee |\phi(\beta ; t)-g(t)|<\frac{C(\beta)}{t^3},
\eeq
and we obtain that on $A_n^{(5)}(\gep_0,\gep)$ and for $n$ large enough
\beq
|\phi(\beta ; T^*_k)-\tfrac{\pi^2}{2(T^*_k)^2}|  \vee |\phi(\beta ; T^*_k )-g(T^*_k)|\leq { \frac{C(\beta)}{\gep_0^{9/2\gamma}N^{3/\gamma}}.}
\eeq
From \eqref{eq:inter} we thus obtain for $n$ large enough and $k\in R_{\gep_0}(n)$
\beq
Z_n^{(k)} \leq 2C\ n^5  f_{k}^{L_{k,\alpha}} \exp\Big\{-N \min_{\ell \in \mathcal{R}_{\gep_0}(n)}  G_n^{\beta-\epsilon} (\ell)+\  C(\beta)\gep_0^{-9/2\gamma}n^{\frac{\gamma-1}{\gamma+2}} \Big\}.
\eeq
We also remind that on $A^{(6)}_n(\gep_0,\gep)$, $f_{k}^{L_{k,\alpha}} \leq e^{n^{\frac{\gamma}{2(\gamma+2)}}}$ so that finally for $k \in R_{\epsilon_0}(n)$, and for $n$ large enough,
\beq
\label{eq:step2}
Z_n^{(k)} \leq 2C\ n^5\exp\Big(-N \min_{\ell \in \mathcal{R}_{\gep_0}(n)}  G_n^{\beta-\epsilon} (\ell) +o(N)\Big).
\eeq
\textbf{Step $3$.} It remains to sum this last equation for $k \in R_{\gep_0}(n) $. As the environment is in $A_n^{(8)}(\gep_0)$, we finally obtain that for $n$ large enough
\beq
Z_n \leq \gep_0^{-1} 2C\ n^6\exp\Big(-N \min_{\ell \in \mathcal{R}_{\gep_0}(n)}  G_n^{\beta-\epsilon}(\ell)  +o(N)\Big)
\eeq
and 
\beq
F_n \geq \min_{\ell \in \mathcal{R}_{\gep_0}(n)} G_n^{\beta-\epsilon}(\ell) +o(1), 
\eeq
where the $o(1)$ holds as $n\to\8$.
\end{proof}

\section{Proof of Theorem \ref{thm0}} \label{sec:pot}
This section is divided in two parts. In the first one we establish that, for $n$ large enough, environments are good with probability arbitrary close from $1$. The second one is devoted to the proof our main result Theorem \ref{thm0}. Due to the control on the environment we can actually restrict our analysis to the event of good environments so that results of Proposition \ref{prop:lb} and \ref{prop:ub} are in force and provide a precise control on $Z_n$.

\subsection{Environments are good with high probability}
The aim of this part is to prove the following proposition that assures a control of the probability that an environment is good:

\begin{proposition}\label{lem:ge}
For all $\gt>0$ there exists $\delta$ and $\gep_1(\delta)$ small enough such that for all $\gep_0<\gep_1(\delta)$, for all $\gep,\eta>0$
\beq
\liminf_{n\to +\8} \bbP\left(\Omega_n(\delta,\gep_0,\gep,\eta) \right) \ge 1-\gt.
\eeq
\end{proposition}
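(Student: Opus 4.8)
The plan is to prove that each of the eleven events $A^{(i)}_n$ has probability tending to one as $n\to\8$ (after fixing the parameters $\delta,\gep_0,\gep,\eta$ appropriately), and then conclude by a union bound. Since there are finitely many events, once each satisfies $\liminf_n \bbP(A^{(i)}_n) \ge 1 - \gt/11$ we are done. The only genuinely delicate point is that the choice of $\delta$ (and then $\gep_0$, depending on $\delta$) must be made \emph{before} taking $n$ large, so the argument proceeds in the order: first pick $\delta$ small to handle the events that require a small parameter (notably $A^{(4)}_n$, $A^{(9)}_n$, and the second condition in \eqref{eq:gep0}); then pick $\gep_1(\delta)$ and any $\gep_0 < \gep_1(\delta)$; then let $n\to\8$.

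I would group the events as follows. The events $A^{(2)}_n,A^{(3)}_n,A^{(4)}_n,A^{(5)}_n$ are all statements about the sizes of the gaps $T_k$ for $k$ up to a constant multiple of $N$, and about the partial sums $\tau_{N/\gep_0}$. Using \eqref{eq:tail.ass} and Proposition~\ref{thm:limit.stable}: for $A^{(3)}_n$ one uses that $\tau_{N/\gep_0}$ is of order $(N/\gep_0)^{1/\gamma}$ (if $\gamma\le1$) or $(N/\gep_0)\bbE(T_1)$ (if $\gamma>1$), which in either case is $o(n)$ since $N = n^{\gamma/(\gamma+2)}$; $A^{(1)}_n$ is similar, bounding $\tau_{N^{1+\gk}}$ via Proposition~\ref{thm:limit.stable} and checking the exponents against the definition \eqref{def:kappa} of $\gk$. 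For $A^{(2)}_n$ one estimates $\bbP(\max_{k\le \gep_0 N} T_k > \gep_0^{1/2\gamma} N^{1/\gamma}) \le \gep_0 N\, \bbP(T_1 > \gep_0^{1/2\gamma} N^{1/\gamma}) \sim \gep_0 N \cdot c_\tau \gep_0^{-1/2} N^{-1} = c_\tau \sqrt{\gep_0}$, which is small for small $\gep_0$. For $A^{(4)}_n$, the gaps $T_N,\dots,T_{2N}$ are $N$ i.i.d.\ copies, and $\bbP(\text{none exceeds } \delta N^{1/\gamma}) = (1 - \bbP(T_1 > \delta N^{1/\gamma}))^N \approx \exp(-N c_\tau \delta^{-\gamma} N^{-1}) = \exp(-c_\tau \delta^{-\gamma})$, which tends to $0$ as $\delta\to0$; this fixes part of the choice of $\delta$. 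For $A^{(5)}_n$, the records in the window $R_{\gep_0}(n)$ correspond to indices $i(k)$ in $[\gep_0 N, \gep_0^{-1}N]$, and the record value $T^*_k$ at such a record is, by definition of a record, at least as large as $T_{\lceil\gep_0 N\rceil+1}$; one checks that with high probability there is a gap of size at least $\gep_0^{3/2\gamma}N^{1/\gamma}$ appearing early in each relevant block, along the lines of $A^{(2)}_n$ and $A^{(4)}_n$.

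The events $A^{(6)}_n,A^{(7)}_n$ concern $f_k$, $L_{k,\alpha}$, and $h(f_k,L_{k,\alpha},\alpha)$; the key observation is that $f_k = 2f(\pi T^*_{k-1}/T^*_k \cdot (1+\cdots))$ is controlled because, by extreme-value theory for the power-law gaps, the ratio $T^*_{k-1}/T^*_k$ of consecutive record values stays bounded away from $1$ (so $f_k$ stays bounded) with high probability, while $L_{k,\alpha}$, the number of ``bad'' edges, is at most the number of pairs of indices below $i(k)$ whose associated gap exceeds $\alpha T^*_k$; since $T^*_k$ is the largest gap up to $i(k)$ and is of polynomial order $N^{1/\gamma}$ while $i(k) = O(N/\gep_0)$, the number of gaps comparable to $T^*_k$ is $O(1)$ (or at worst polylogarithmic) with high probability, so $f_k^{L_{k,\alpha}}$ is at most $e^{O(\log N)}$, easily below $\exp(n^{\gamma/2(\gamma+2)})$. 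For $A^{(8)}_n$, the number of records among the first $m$ indices of an i.i.d.\ sequence is a.s.\ $\sim \log m$, so $|R(1,N/\gep_0)| \le [\log(N/\gep_0)]^2$ with probability tending to one. For $A^{(9)}_n$ and $A^{(10)}_n$, one uses Proposition~\ref{pr:lambda.beta}: $\lambda(\ell,\beta)\to\lambda(\beta)$ $\bbP$-a.s.\ (and in $L_1$), so for $\ell\ge \gep_0 N \to\8$ the difference $|\lambda(\ell-1,b)-\lambda(b)|$ is eventually below any fixed threshold a.s.; since a.s.\ convergence implies convergence in probability, $\bbP(A^{(10)}_n)\to1$, and for $A^{(9)}_n$ one needs $\delta$ chosen (after $A^{(4)}_n$) small enough that $C_1/(2\delta^2)$ exceeds the eventual bound. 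Finally $A^{(11)}_n = \{\Pi_N(\{y>1/\gep_0\}) = 0\}$: by Proposition~\ref{prop:convergenceP}, $\Pi_N \stackrel{w}{\to}\Pi$, and $\Pi(\{y>1/\gep_0\})$ is Poisson with parameter $p(\{y>1/\gep_0\}) = +\8 \cdot c_\tau \gep_0^\gamma$ — wait, that is infinite because of the unbounded $x$-range; the event $A^{(11)}_n$ as written must implicitly restrict $x$ to a bounded range (the relevant records live in $x\le\gep_0^{-1}$), so $\Pi(\{x\le\gep_0^{-1},\,y>\gep_0^{-1}\})$ is Poisson with small parameter $\gep_0^{-1} c_\tau \gep_0^\gamma$, and one picks $\gep_0$ small enough that this is below $\gt/11$ in probability, using the convergence of $\Pi_N$ restricted to the open set (or more carefully, a portmanteau-type argument since $\{y>\gep_0^{-1}\}$ after intersecting with a suitable compact-boundary set is a $\Pi$-continuity event).

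I expect the main obstacle to be the bookkeeping of the quantifier order: one must verify that $\delta$ can be fixed once and for all to simultaneously satisfy the constraints coming from $A^{(4)}_n$ (need $\bbP$ close to $1$, so $\delta$ small), $A^{(9)}_n$ (need $C_1/(2\delta^2)$ to dominate the limiting fluctuation of $\lambda(2N,\beta)-\lambda(\beta)$, which is zero in the limit, so this is automatic once $\delta$ is fixed and $n$ is large), and then that $\gep_1(\delta)$ can be chosen so that for all $\gep_0<\gep_1(\delta)$ the events $A^{(2)}_n,A^{(5)}_n,A^{(11)}_n$ (which improve as $\gep_0\to0$) and the two inequalities in \eqref{eq:gep0} all hold. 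A second, more technical obstacle is $A^{(6)}_n$–$A^{(7)}_n$: controlling $f_k^{L_{k,\alpha}}$ requires a uniform (over $k\in R_{\gep_0}(n)$) handle on the record-value ratios and on the counts of near-maximal gaps, which is where one genuinely uses the power-law tail \eqref{eq:tail.ass} rather than just finiteness of moments; the estimate $\bbP(\exists k\in R_{\gep_0}(n): T^*_{k-1}/T^*_k > 1-c) \to 0$ for suitable $c$, and the analogous bound on $L_{k,\alpha}$, should follow from a second-moment or direct union-bound computation on the order statistics of $\{T_1,\dots,T_{N/\gep_0}\}$, but it is the least routine part of the argument.
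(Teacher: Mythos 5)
Your proposal follows essentially the paper's own route: a union bound over the eleven events, handling the gap-size events via the power-law tail \eqref{eq:tail.ass}, the partial-sum events via Proposition~\ref{thm:limit.stable}, the $\lambda$-events via the almost-sure convergence in Proposition~\ref{pr:lambda.beta}, the records events via ratio and counting estimates, and the same quantifier order (fix $\gt$, then $\delta$, then $\gep_0<\gep_1(\delta)$, then $n\to\8$). Two imprecisions are worth flagging, though neither breaks the argument: the classical asymptotic $\mathfrak{R}_m \sim \log m$ a.s.\ for the number of records holds for \emph{continuous} gap laws, while here the $T_i$ are integer-valued and ties are possible, so the paper must establish the one-sided product bound $\bbE(I_{n_1}\cdots I_{n_p}) \le (n_1\cdots n_p)^{-1}$ from scratch via exchangeability (Propositions~\ref{pr:records2}--\ref{lem:records_dev}), which still yields the upper deviation estimate needed for $A_n^{(8)}$; and for $A_n^{(6)}$--$A_n^{(7)}$ the actual ingredient is the tail estimate $\bbP(T^*_{k-1}/T^*_k \ge 1-u) \le Cu$ combined with a union bound over the $O((\log n)^2)$ relevant records and a binomial-to-Poisson count of near-maximal gaps, rather than a uniform ``bounded away from $1$'' claim on the ratio. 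Your observation that $A_n^{(11)}$ is ill-posed as literally written (the count $\Pi_N(\{y>1/\gep_0\})$ is a.s.\ infinite since the sum defining $\Pi_N$ runs over all $i\ge 1$) is correct: the paper's computation $\bbP(T_1 \le N^{1/\gamma}/\gep_0)^N$ tacitly restricts to $i\le N$, i.e.\ $x\le 1$, which is what the application in \eqref{eq:ubGtilde} actually requires.
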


We first establish various results on the \textit{records} of the sequence $(T_i)_{i\geq 1}$.
\subsubsection{Preliminaries on records}
\label{sec:records}
We say that $n\in\bbN$ is a record if $n=1$ or $n\ge 2$ and $T_n > \max\{T_1, \ldots, T_{n-1}\}$.
Let us define
\beq
I_n = 
\begin{cases}
1 & \text{if $n$ is a record}\\
0 & \text{otherwise},
\end{cases}
\eeq
the indicator of a record, and
\beq
\mathfrak{R}_n = \sum_{k=1}^n I_k = |R(1,n)|
\eeq
the number of records between $1$ and $n$. It was noticed (see Rényi~\cite{Re62}) that when $T_1$ is a continuous random variable, the $I_n$'s are independent Bernoulli random variables with mean $\bE(I_n) = 1/n$. However, we deal here with the discrete case, where this simple structure breaks down because of the possibility of ties. Actually, this case seems to have attracted relatively little attention (see however~\cite{GLS} and references therein). In this section, we provide some results in the discrete case (moments and deviations from the mean for the number of records) that will be useful later, though we do not aim for optimality. We start with:
\begin{proposition}
\label{pr:records2}
For all $p\in\bbN$ and $1\le n_1 < n_2 < \ldots n_p$,
\beq
\bE(I_{n_1}\ldots I_{n_p}) \le (1/n_1)\ldots (1/n_p).
\eeq
\end{proposition}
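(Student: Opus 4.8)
The plan is to reduce the statement to the continuous-variable case, which is classical (Rényi), via a small perturbation of the environment that can only create records. First I would enlarge the probability space to carry a sequence $(U_i)_{i\ge 1}$ of i.i.d.\ random variables, uniform on $[0,1)$ and independent of $(T_i)_{i\ge 1}$, and set $V_i := T_i + U_i$. Since the pairs $(T_i,U_i)$ are i.i.d., so are the $V_i$'s, and their common law is atomless (conditionally on $\{T_i=m\}$ it is uniform on $[m,m+1)$). Writing $I^V_i := \ind_{\{V_i > \max\{V_1,\ldots,V_{i-1}\}\}}$ for $i\ge 2$ and $I^V_1:=1$ for the record indicators of $(V_i)_{i\ge1}$, Rényi's theorem applies verbatim to $(V_i)$ and gives that the $I^V_i$ are independent with $\bE(I^V_i)=1/i$.

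The key (elementary) observation is the pointwise inequality $I_i\le I^V_i$ for every $i\ge1$. Indeed $I_1=I^V_1=1$, while if $i\ge2$ is a record for $(T_j)$, i.e.\ $T_i>T_j$ for all $j<i$, then, the $T_j$'s being integers, $T_i\ge T_j+1>T_j+U_j=V_j$ whereas $V_i=T_i+U_i\ge T_i$; hence $V_i>V_j$ for all $j<i$, so $i$ is a record for $(V_j)$ too. Consequently, for any $1\le n_1<\cdots<n_p$,
\beq
\bE(I_{n_1}\cdots I_{n_p}) \le \bE(I^V_{n_1}\cdots I^V_{n_p}) = \prod_{i=1}^p \bE(I^V_{n_i}) = \prod_{i=1}^p \frac{1}{n_i},
\eeq
where the middle equality is the independence of the $I^V_{n_i}$'s; this is exactly the claim.

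I do not expect a serious obstacle here; the only points worth a line of justification are that $(V_i)_{i\ge1}$ is genuinely i.i.d.\ and atomless — so that Rényi's result can be invoked as stated — and that replacing $(T_j)$ by $(V_j)$ never destroys a record. An alternative, slightly longer route avoiding the auxiliary randomness is to condition on the unordered multiset $\{\!\{T_1,\ldots,T_{n_p}\}\!\}$: if its maximum is attained more than once, then $n_p$ cannot be a record and the conditional probability vanishes; otherwise the unique maximal value must sit at position $n_p$, which under the uniform ordering has conditional probability $1/n_p$, and one closes the induction on $p$ by noting that the status of $n_i$ as a record, for $i<p$, is determined by positions $1,\ldots,n_{p-1}\le n_p-1$, which carry a uniform ordering of the remaining $n_p-1$ values. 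The perturbation argument is shorter and is the one I would write out.
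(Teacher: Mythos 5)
Your proof is correct and takes a genuinely different route from the paper's. The paper proceeds by induction on $p$: it introduces the indicator $J_n$ that the first $n$ variables have a unique maximum, uses exchangeability of $(T_1,\ldots,T_{n_1})$ to write $\bE(I_{n_1}\cdots I_{n_{p+1}})=\tfrac{1}{n_1}\bE(J_{n_1}I_{n_2}\cdots I_{n_{p+1}})$, drops $J_{n_1}\le 1$, and closes the induction on the remaining indices --- a self-contained argument that never leaves the discrete setting. Your perturbation argument instead couples $(T_i)$ to the atomless i.i.d.\ sequence $V_i=T_i+U_i$, observes the pointwise domination $I_i\le I^V_i$ (integer ties can only destroy records, never create them), and imports R\'enyi's theorem. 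Both are short and clean; the paper's version isolates exactly where the inequality comes from, namely $\bE(J_n)\le 1$, whereas your coupling makes the same point stochastically --- the discrete sequence has fewer records than a continuous one --- and would automatically give domination for any monotone functional of the $(I_i)$, not just products, at the modest cost of enlarging the probability space and invoking R\'enyi as a black box. Your alternative sketch (conditioning on the multiset and peeling off the largest index $n_p$) is closer in spirit to the paper's argument, but note that the paper peels from the smallest index $n_1$, which leaves the remaining variables i.i.d.\ so the induction hypothesis applies verbatim; peeling from the back conditions the remaining values into a merely exchangeable sequence and would require strengthening the induction statement accordingly.
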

As a consequence we obtain:
\begin{proposition}
\label{lem:records_dev}
For $b>1$ there exists a positive constant $c=c(b)$ (which we may choose smaller but arbitrarily close to $\sup_{\gl>0}\{b\gl + 1 - e^{\gl}\}=1+b(\ln b -1) >0$) such that for $n$ large enough,
\beq
\label{eq:records_dev2}
\bP(\mathfrak{R}_n \ge b\log n) \le n^{-c}.
\eeq
\end{proposition}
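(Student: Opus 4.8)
The plan is to run an exponential Markov (Chernoff--type) bound on $\mathfrak{R}_n=\sum_{k=1}^n I_k$, using the moment estimate of Proposition~\ref{pr:records2} as a substitute for the independence of the $I_k$'s, which fails in the discrete setting because of ties. Fix $\gl>0$. Since each $I_k$ takes values in $\{0,1\}$ we have $e^{\gl I_k}=1+(e^{\gl}-1)I_k$, so, expanding the product over $k$,
\beq
e^{\gl\mathfrak{R}_n}=\prod_{k=1}^n\bigl(1+(e^{\gl}-1)I_k\bigr)=\sum_{S\subseteq\{1,\dots,n\}}(e^{\gl}-1)^{|S|}\prod_{k\in S}I_k .
\eeq
Taking expectations and bounding $\bE\bigl(\prod_{k\in S}I_k\bigr)\le\prod_{k\in S}(1/k)$ by Proposition~\ref{pr:records2}, the sum refactorises and, using $1+x\le e^{x}$ together with $\sum_{k\le n}1/k\le 1+\log n$, one gets
\beq
\bE\bigl(e^{\gl\mathfrak{R}_n}\bigr)\le\prod_{k=1}^n\Bigl(1+\frac{e^{\gl}-1}{k}\Bigr)\le\exp\Bigl((e^{\gl}-1)\sum_{k=1}^n\tfrac1k\Bigr)\le\exp\bigl((e^{\gl}-1)(1+\log n)\bigr).
\eeq

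Next I would apply Markov's inequality and optimise in $\gl$: for every $\gl>0$,
\beq
\bP(\mathfrak{R}_n\ge b\log n)\le e^{-\gl b\log n}\,\bE\bigl(e^{\gl\mathfrak{R}_n}\bigr)\le e^{\,e^{\gl}-1}\,n^{-(\gl b-e^{\gl}+1)} .
\eeq
The exponent $\gl b-e^{\gl}+1$ is maximised over $\gl>0$ at $\gl=\log b$, which is admissible since $b>1$, and there it equals $1+b(\log b-1)=\sup_{\gl>0}\{b\gl+1-e^{\gl}\}>0$; with this choice the prefactor is the fixed constant $e^{b-1}$. Hence $\bP(\mathfrak{R}_n\ge b\log n)\le e^{b-1}\,n^{-(1+b(\log b-1))}$, and for any $c<1+b(\log b-1)$ we have $e^{b-1}\le n^{(1+b(\log b-1))-c}$ as soon as $n$ is large enough, which gives $\bP(\mathfrak{R}_n\ge b\log n)\le n^{-c}$ for $n$ large, as claimed.

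I do not anticipate a genuine obstacle here. The only conceptual point is that one cannot factor $\bE(e^{\gl\mathfrak{R}_n})$ directly as in R\'enyi's continuous case; the one--sided moment bound of Proposition~\ref{pr:records2} is precisely the ingredient that makes the expansion collapse to the desired product, and since it is an inequality in the right direction nothing is lost. The remaining steps are the standard Chernoff computation and the harmonic-sum estimate $\sum_{k\le n}1/k=\log n+O(1)$, whose $O(1)$ error and the constant $e^{b-1}$ are harmless once $c$ is taken strictly below the supremum.
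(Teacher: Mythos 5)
Your proof is correct and follows essentially the same route as the paper: the same expansion $e^{\gl\mathfrak{R}_n}=\prod_k(1+(e^\gl-1)I_k)$, the same appeal to Proposition~\ref{pr:records2} to collapse the expanded sum to $\prod_k(1+(e^\gl-1)/k)$, and the same Chernoff optimisation. Your version is slightly more explicit (choosing $\gl=\log b$ and tracking the $e^{b-1}$ prefactor rather than hiding it in a $[1+o(1)]$ factor), but the argument is the same.
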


\begin{proof}[Proof of Proposition \ref{pr:records2}]
We prove it by iteration on $p\ge 1$ and thus start with the case $p=1$. Let $n\ge 2$ (the statement is trivial for $n=1)$. Let $J_n$ be the indicator of the event that there is a strict maximum among the $n$ first variables, meaning that
\beq
\label{eq:Jn}
J_n = \ind_{\{\exists 1\le i \le n \colon \forall  1\le j \le n,\ j\neq i,\ T_i > T_j\}}.
\eeq
By exchangeability of the $n$ first variables we get that 
\beq
\label{eq:Jn2}
\bE(I_n) = \frac1n \bE(J_n) \le \frac1n.
\eeq
Suppose now that the statement is true for $p\ge1$ and let $2\le n_1 < \ldots < n_{p+1}$ (if $n_1= 1$, we can safely remove it). Again, by exchangeability of $(T_1, \ldots, T_{n_1})$ we have
\beq
\bE(I_{n_1}\ldots I_{n_{p+1}}) = \frac{1}{n_1} \bE({J_{n_1}}I_{n_2}\ldots I_{n_{p+1}}) \le \frac{1}{n_1} \bE(I_{n_2}\ldots I_{n_{p+1}}),
\eeq
and the result follows by using the induction hypothesis.
\end{proof}
\begin{proof}[Proof of Proposition \ref{lem:records_dev}]
Let $\gl>0$ to be specified later. By Chernoff's bound,
\beq
\bP(\mathfrak{R}_n \ge b\log n) \le e^{-b\gl \log n}\bE[e^{\gl R_n}].
\eeq
Since $\mathfrak{R}_n = \sum_{k=1}^n I_k$ and the $I_k$'s are $\{0,1\}$valued random variables, we get
\beq
e^{\gl \mathfrak{R}_n} = \prod_{k=1}^n (1+ [e^\gl-1]I_k) = 1 + \sumtwo{J\subseteq \{1,\ldots, n\}}{J\neq\eset} [e^{\gl}-1]^{|J|} \prod_{j\in J} I_j.
\eeq
By taking the expectation and using Proposition~\ref{pr:records2}, we get
\beq
\bE[e^{\gl \mathfrak{R}_n}] \le \prod_{k=1}^n \Big(1+ [e^\gl-1]\frac1k\Big) \le \exp([e^\gl-1][1+o(1)]\log n ),
\eeq
where the $o(1)$ holds as $n\to\infty$. Finally, we obtain 
\beq
\bP(\mathfrak{R}_n \ge b\log n) \le \exp(\{[e^\gl-1][1+o(1)]-b\gl\}\log n ),
\eeq
which concludes the proof.
\end{proof}

\subsubsection{Proof of Proposition~\ref{lem:ge}} \label{sec:proba.good.env}

We will use notations from the previous subsection during the proof, and sometimes write $\mathfrak{R}(n)$ instead of $\mathfrak{R}_n$ for the sake of readability. We consider the events $A_n^{(i)}$ for $1\le i \le 11$ and conclude by combining the results obtained in each cases.\\
Along the proof we will use that, by~\eqref{eq:tail.ass}, there exists $c_1,c_2>0$ such that for all $m\in \bbN$,
\beq
\label{eq:UB_tail_T}
1 - c_1m^{-\gamma} \leq \bbP(T_1 \le m) \le 1 - c_2m^{-\gamma} \le \exp(-c_2m^{-\gamma}).
\eeq
{\noindent \it {Case $i=1$}}. Assume first that $\gamma \le1$, so that $\gk = \frac{\gamma}{4}$. 
Since 
\beq
N^{(1+\frac{\gamma}{4})\frac{2}{\gamma}} = o(n^{1-\frac{\gamma}{4(\gamma+2)}}),
\eeq
one has $\lim_{n\to\8}\bbP(A_n^{(1)}) = 0$ from Proposition~\ref{thm:limit.stable}. Assume now that $\gamma>1$. Then, 
\beq
N^{2(1+\gk)} = 
\begin{cases}
o(n^{1-\frac{2-\gamma}{4(2+\gamma)}}) & \text{if } \gga\in(1,2)\\
o(n^{1+\frac{2\gga-1}{2(\gga+2)}}) & \text{if } \gga \ge2,
\end{cases}
\eeq
and again $\lim_{n\to\8}\bbP(A_n^{(1)}) = 0$ from Proposition~\ref{thm:limit.stable}.\\

{\noindent \it {Case $i=2$}}. Note that, by \eqref{eq:UB_tail_T},
\beq
\ba
 \bbP\left(A_n^{(2)}(\gep_0)\right) =  \bbP\left(T_1 \leq  \gep_0^{\frac{1}{2\gamma}}N^{1/ \gamma}\right)^{\gep_0 N}\geq \left(1-\frac{c_1}{\gep_0^{\frac{1}{2}}N}\right)^{\gep_0 N}=e^{-c_1\gep_0^{1/2}}+o(1)\qquad n\to +\8.
\ea
\eeq
We obtain $\lim_{\gep_0\to 0} \liminf_{n \to +\8} \bbP(A_n^{(2)}(\gep_0))=1$.\\

{\noindent \it {Case $i=3$}}. Here we note that $N^{1/\gamma} = o(n)$ when $\gamma\le1$ and $N = o(n)$ when $\gamma>1$, and we conclude using Proposition \ref{thm:limit.stable} that for all $\gep_0>0$
\beq
\lim_{n\to\8}\bbP(A_n^{(3)}(\gep_0)) = 0.
\eeq

{\noindent \it {Case $i=4$}.} By independence of the $T_\ell$'s, one has
\beq
\bbP(A_n^{(4)}(\gd,\gep)^c) \le \bbP(T_1 < T_0(\gep) \vee \gd N^{\frac{1}{\gamma}})^N.
\eeq
Therefore,
\beq
\bbP(A_n^{(4)}(\gd,\gep)^c) \le \exp( - c(NT_0(\gep)^{-\gamma} \wedge \gd^{-\gamma})),
\eeq
from which we deduce that for all $\gep>0$
\beq
\label{eq:cas4}
\lim_{\gd\to 0} \liminf_{n\to\8} \bbP(A_n^{(4)}(\gd,\gep)) = 1.
\eeq

{\noindent \it {Case $i=5$}.} Recall the definitions of $R_{\gep_0}(n)$ and $T^*_k$ in~\eqref{eq:def.t.tau.star} and~\eqref{eq:def.R.cR}. We get
\beq
\bbP(A_n^{(5)}(\gep_0,\gep)^c) \le \bbP(\forall \ell \le \gep_0N,\ T_\ell \leq T_0(\gep) \vee \gep_0^{\frac{3}{2\gamma}}N^{\frac{1}{\gamma}}) \le \bbP(T_1 \leq T_0(\gep) \vee \gep_0^{\frac{3}{2\gamma}}N^{\frac{1}{\gamma}})^{\gep_0 N}.
\eeq
By using again~\eqref{eq:UB_tail_T}, we obtain
\beq
\bbP(A_n^{(5)}(\gep_0,\gep)^c) \le \exp(-c_2 (\gep_0T_0(\gep)^{-\gamma}N \wedge \gep_0^{-1/2})),
\eeq
from which we get for all $\gep>0$
\beq
\label{eq:cas5}
\lim_{\gep_0\to 0} \liminf_{n\to\8} \bbP(A_n^{(5)}(\gep_0,\gep)) = 1.
\eeq

{\noindent \it {Case $6\le i\le 8$}}. Let us first prove that for $\gep_0 >0$,
\beq
\label{eq:cas8}
\lim_{n\to\infty} \bbP(A_n^{(8)}(\gep_0)) =1.
\eeq
Indeed, since $|R(1,N/\gep_0)| = \mathfrak{R}(N/\gep_0)$, it is enough to prove that
\beq
\bbP(\mathfrak{R}_n \le (\log n)^2) \to 1,\qquad n\to\8,
\eeq
which can be easily deduced from Proposition~\ref{lem:records_dev}. We now deal with the event corresponding to $i=7$.
Fix $\gep_0,\gep>0$, and note that
\beq
\label{eq:cas7aux1}
\ba
\bbP(A_n^{(7)}(\gep_0,\gep)^c) \le \bbP( & A_n^{(8)}(\gep_0)^c) + \bbP(A_n^{(5)}(\gep_0,\gep)^c)\\
& +\bbP \Big(\exists k\leq [\log(N/\gep_0)]^2 \colon \gep_0^{\frac{3}{2\gamma}}N^{\frac{1}{\gamma}} \le T^*_k \le h(f_k,L_k,\ga(\gep)),\ i(k)\le N/\gep_0 \Big).
\ea
\eeq
By a union bound we are left to prove that 
\beq
\label{eq:but}
p_k(n,\gep_0) = o([\log n]^{-2}),
\eeq
where
\beq
\label{eq:628}
p_k(n,\gep_0,\gep) := \bbP(\gep_0^{\frac{3}{2\gamma}}N^{\frac{1}{\gamma}} \le T^*_k \le h(f_k,L_k,\ga(\gep)),\ i(k)\le N/\gep_0).
\eeq
From~\eqref{eq:def.gALga} in the proof of Lemma~\ref{lem:control_pinbad}, we have for $n$ large enough,
\beq
\label{eq:pk1pk2}
p_k(n,\gep_0,\gep) \le \bbP(f_k^{L_k} \ge C(\gep_0,\gep) N^{\frac{1}{2\gamma}} ,\ T_k^* \ge \gep_0^{\frac{3}{2\gamma}}N^{\frac{1}{\gamma}},\ i(k)\le N/\gep_0).
\eeq
Let us first show that
\beq
\label{eq:LBfk}
\bbP\Big(f_k > A,\ T_k^* \ge \gep_0^{\frac{3}{2\gamma}}N^{\frac{1}{\gamma}}\Big) \le 
C \Big[A \wedge \gep_0^{\frac{3}{\gamma}}N^{\frac{2}{\gamma}}\Big]^{-1}.
\eeq
To this end, our first ingredient is the following upper bound, which holds for $u\in(0,1)$ and $k\in\bbN$:
\beq
\label{eq:ratio_rec_UB}
\bbP\Big(\frac{T^*_{k}}{T^*_{k+1}}\ge1-u\Big) \le C u.
\eeq
We show this inequality at the end of the proof, in order not to disrupt the main line of ideas.
We will also use the following elementary bound:
\beq
\label{eq:ineqf}
f(u)=\frac{u}{\sin(u)}\leq \frac{C}{\pi-u}, \qquad u\in (0,\pi).
\eeq
Coming back to \eqref{eq:LBfk}, we have by using~\eqref{eq:ineqf},
\beq
\ba
\bbP\Big(f_k > A,\ T_k^* \ge \gep_0^{\frac{3}{2\gamma}}N^{\frac{1}{\gamma}}\Big) &\le 
\bbP\Big( \frac{T_{k-1}^*}{T_{k}^*} \Big[1+\frac{C}{(T_{k}^*)^2}\Big] \ge 1 - \frac{C}{A},\ T_k^* \ge \gep_0^{\frac{3}{2\gamma}}N^{\frac{1}{\gamma}}\Big)\\
& \le \bbP\Big( \frac{T_{k-1}^*}{T_{k}^*}\ge 1 - \frac{C}{A} - \frac{C}{\gep_0^{\frac{3}{\gamma}}N^{\frac{2}{\gamma}}}\Big),
\ea
\eeq
and we obtain~\eqref{eq:LBfk} thanks to~\eqref{eq:ratio_rec_UB}. In view of \eqref{eq:but}, we choose $A=A_n=[\log n]^3$. For $k\geq 1$, from \eqref{eq:pk1pk2},
\beq
\label{eq:634}
p_k(n,\gep_0,\gep)\leq \bbP \Big(f_k \ge A_n ,\ T_k^* \ge \gep_0^{\frac{3}{2\gamma}}N^{\frac{1}{\gamma}}\Big)
+\bbP\Big(L_k \ge C \frac{\log n}{\log \log n},\ T_k^* \ge \gep_0^{\frac{3}{2\gamma}}N^{\frac{1}{\gamma}},\ i(k)\le N/\gep_0\Big).
\eeq
Using \eqref{eq:LBfk}, the first term in the sum above is $o[(\log n)^{-2}]$. We now deal with the second one. 
On the corresponding event, we have
\beq
\label{eq:event.bin}
\card\Big\{1\le j \le N/\gep_0 \colon T_j \ge \ga \gep_0^{\frac{3}{2\gamma}}N^{\frac{1}{\gamma}} \Big\} \ge C \frac{\log n} {\log \log n}.
\eeq
Furthermore, the random variable in the l.h.s. of the inequality follows a binomial law with parameters $N/\gep_0$ and $\bbP(T_1 \ge  \ga \gep_0^{\frac{3}{2\gamma}}N^{\frac{1}{\gamma}}) \le \alpha^{-\gamma}\gep_0^{-3/2}N^{-1}$ (up to a harmless constant). By using a standard binomial-to-Poisson approximation (cf.\ end of the proof)
\beq
\label{eq:approx.bin}
\bbP(\bin(\ell,q)\ge m) \le \Big( \frac{q\ell}{m} \Big)^{m} e^{m-q\ell},\qquad \ell\in\bbN,\ q\in(0,1),\ m\in\bbN \colon q\ell < m,
\eeq
with $q=\gep_0^{-3/2}N^{-1}$, $\ell = N/\gep_0$ and $m = C \log n/\log \log n$, 
we get that
\beq
\label{eq:LB.Lk}
\bbP(\text{\eqref{eq:event.bin}}) = o(n^{-C/2}),
\eeq
which is enough to conclude. From what precedes we finally obtain that for all $\gep_0,\gep>0$,
\beq
\lim_{n\to\infty} \bbP(A_n^{(7)}(\gep_0,\gep)^c) = 0.
\eeq 
The event corresponding to $i=6$ can be readily treated with the same idea, since the $N^{\frac{1}{2\gamma}}$ in \eqref{eq:pk1pk2} is less than $\exp(n^{\frac{\gamma}{2(\gamma+2)}})$. Finally,
\beq
\label{eq:cas67}
\lim_{n\to\infty} \bbP(A_n^{(6)}(\gep_0,\gep)^c \cup A_n^{(7)}(\gep_0,\gep)^c) = 0.
\eeq

{\noindent \it {Case $9\le i\le 10$}}. From the almost-sure convergence in Proposition~\ref{pr:lambda.beta}, we readily get that for any choice of $\gd,\gep_0,\gep,\eta$,
\beq
\label{eq:cas910}
\lim_{n\to \infty} \bbP(A_n^{(9)}(\gd) \cap A_n^{(10)}(\gep_0,\gep,\eta)) = 1.
\eeq

{\noindent \it {Case $i=11$}}. Note that by \eqref{eq:UB_tail_T}, 
\beq
\ba
 \bbP\left(A_n^{(11)}(\gep_0)\right) = \bbP\left(T_1 \leq  \frac{N^{1/ \gamma}}{\gep_0}\right)^N \geq e^{-c_1 \gep_0^\gamma}+o(1) \qquad n \to +\8 .
\ea
\eeq
We obtain $\lim_{\gep_0\to 0} \liminf_{n \to +\8} \bbP(A_n^{(11)}(\gep_0))=1$.

{\noindent \it Proof of \eqref{eq:ratio_rec_UB}.}
By writing $1+v = (1-u)^{-1}$ for convenience, one has
\beq
\bbP\Big(T^*_{k+1} \le (1+v){T^*_k} \Big|\ T^*_k = \ell \Big) 
=
\begin{cases}
0 & \text{if } v < 1/\ell\\
\bbP(T_1 \le \lfloor(1+v)\ell\rfloor\ |\ T_1 > \ell) & \text{otherwise}.
\end{cases}
\eeq
From our assumption on the tail of $T_1$ and with the help of a standard comparison between series and integrals, the probability in the second case is bounded from above by
\beq
C\ell^{\gamma} \sum_{n=\ell+1}^{\lfloor(1+v)\ell\rfloor} n^{-(1+\gamma)} \le C\ell^{\gamma} \int_{\ell}^{\lfloor(1+v)\ell\rfloor-1} t^{-(1+\gamma)}\dd t.
\eeq
In turn, the integral above is controlled by
\beq
\int_{\ell}^{(1+v)\ell} t^{-(1+\gamma)}\dd t = C[1-(1+v)^{-\gamma}]\ell^{-\gamma} \le C u \ell^{-\gamma},
\eeq
which completes the proof of \eqref{eq:ratio_rec_UB}.\\

{\noindent \it Proof of \eqref{eq:approx.bin}.} By Chernoff's bound, one has for $x>0$,
\beq
\bbP(\bin(\ell,q)\ge m) \le e^{-x m} \bbE(e^{x \ber(q)})^\ell \le \exp(-x m+ q(e^x -1)\ell).
\eeq  
Since $m> q\ell$, we may choose $x = \log(m/(q\ell))$ to minimize the r.h.s.\ in the line above and get the result.

\subsection{Conclusion : proof of Theorem \ref{thm0}} \label{subsec:conclusion}
To prove Theorem \ref{thm0}, we establish that for all $u\in \bbR$ which is a continuity point of the distribution function of $F$,
\beq
\label{main_eq}
\lim_{n\to\infty} \bbP(F_n \le u) = \bbP(F \le u).
\eeq
By \eqref{eq:Fcont} all real numbers are continuity points of $F$. Moreover since $F_n$ is positive, we only have to prove \eqref{main_eq} for $u>0$.

\par  We start with the \textbf{upper bound} in \eqref{main_eq}
\beq
\label{main_eqUB}
\limsup_{n\to\infty} \bbP(F_n \le u) \leq \bbP(F \le u).
\eeq
Fix $\theta>0$ that we will let go to $0$ only at the very end of the proof. Fix also $\gep,\eta>0$. From Proposition~\ref{lem:ge}, there exists $\delta>0$ and $\gep_1(\delta)$ so that for $\gep_0<\gep_1(\delta)$
\beq
\label{eq:choice}
\liminf_{n}\bbP(\Omega_n(\delta,\gep_0,\gep,\eta))>1-\theta.
\eeq
Fix $\gep_0<\gep_1(\delta)$ small enough so that the conclusion of Proposition \ref{prop:ub} is satisfied. Thus we obtain that for $n$ large enough
\beq
\label{eq:grossier}
\bbP (F_n \leq u) \leq \bbP(\min_{\ell\in \mathcal{R}_{\gep_0}(n) } G_n^{\beta-\epsilon}(\ell)\leq u + \eta, \Omega_n) +\bbP(\Omega_n^c).
\eeq
From Proposition~\ref{lem:ge} and the choices of $\delta$ and $\gep_0$, $\bbP(\Omega_n^c)< \theta $ for $n$ large enough. Thus we just have to focus on the first term in the last equation. 
We introduce for $n,\ell\geq 1$ and $\beta>0$, the random variable 
\beq
\tilde{G}^{\beta}_n(\ell)=\frac{\lambda(\beta) (\ell-1)}{N}+\frac{\pi^2 n}{2 T_\ell^2 N}= \psi^{\lambda(\beta)}\left(\frac{\ell}{N}, \frac{T_{\ell}}{N^{1/\gamma}}\right).
\eeq
We replace $G$ by $\tilde{G}$ in \eqref{eq:grossier} and control the probability that both processes are not close
 \beq
 \label{eq:mainub}
 \ba
\bbP(\min_{\ell\in \mathcal{R}_{\gep_0}(n) } &G_n^{\beta-\epsilon}(\ell) \leq u + \eta, \Omega_n) \\
&\leq \bbP(\min_{\ell \in \cR} \tilde{G}_n^{\beta-\epsilon}(\ell)< u +2 \eta, \Omega_n)+\bbP(\max_{\ell\in \cR_{\gep_0}(n)} |\tilde{G}_n^{\beta-\epsilon}(\ell)-G_n^{\beta-\epsilon}(\ell)| \geq \eta, \Omega_n).
\ea
\eeq
The first term in the sum gives the main contribution. Let us first prove that the second one is zero for $n$ large enough.
For $\ell\geq 1$ we define
\beq
\Delta_1(n,\ell):=\frac{\ell-1}{N}\left|\lambda (\beta-\epsilon) - \gl(\ell-1,\gb-\gep)\right|
\eeq
and 
\beq
\Delta_2(n,\ell):=\frac{n}{N}|g(T_\ell) -\frac{\pi^2}{2 T_\ell^2}|
\eeq
so that 
\beq
\max_{\ell\in \cR_{\gep_0}(n)}  |\tilde{G}_n^{\beta-\epsilon}(\ell)-G_n^{\beta-\epsilon}(\ell)| \leq \max_{\ell\in \cR_{\gep_0}(n)}  \Delta_1(n,\ell) + \max_{\ell\in \cR_{\gep_0}(n)}  \Delta_2(n,\ell).
\eeq
We first deal with $\Delta_2$. According to \eqref{eq:g} there exists some $C>0$ such that for all $\ell$, $|g(T_\ell) -\frac{\pi^2}{2 T_\ell^2}| \leq \frac{C}{T_\ell^4} $. 
 We can thus deduce that
\beq
\bbP( \max_{\ell\in \cR_{\gep_0}(n)}  \Delta_2(n,\ell) \geq \eta,  \Omega_n) \leq \bbP\left(\frac{C\ n}{N (\gep_0^{\frac{3}{2 \gamma }}N^{1/\gamma})^4} \geq \eta , A^{(5)}_n(\gep_0,\gep) \right),
\eeq
and this last term is $0$ for $n$ large enough.
We turn to the control of $\Delta_1$. 
For $n\ge 1$,
\beq
\ba
\label{eq:useAn10}
\bbP(\max_{\ell\in \cR_{\gep_0}(n)}  \Delta_1(n,\ell) \geq \eta,  \Omega_n) &\leq \bbP(\max_{\ell\in \cR_{\gep_0}(n)}  \Delta_1(n,\ell) \geq \eta, A_n^{(10)}(\gep_0,\gep,\eta)),
\ea
\eeq
and again the last term is $0$ for $n$ large enough.
Let us come back to the first term in \eqref{eq:mainub}, $\bbP(\min_{\ell \in \cR} \tilde{G}_n^{\beta-\epsilon}(\ell)< u +2 \eta, \Omega_n)$.
As $\ell$ ranges $\cR$ we may write 
\beq
\min_{\ell  \in \cR} \tilde{G}_n^{\beta-\epsilon}(\ell)=\Psi^{\lambda(\beta-\epsilon)} \left(\Pi_N\right).
\eeq
We thus obtain
\beq
\label{eq:ubGtilde}
\ba
\bbP(\min_{\ell \in \cR }\  \tilde{G}_n^{\beta-\epsilon}(\ell)< u +2 \eta, \Omega_n ) &\leq \bbP(\Psi^{\lambda(\beta-\epsilon)} \left(\Pi_N\right)< u +2 \eta, A^{(11)}_n(\gep_0) )\\
& \leq \bbP(\Psi^{\lambda(\beta-\epsilon)}_K \left(\Pi_N\right) < u +2 \eta),
\ea
\eeq
where $K:=A^{\lambda(\beta-\gep)}_{u+2\eta} \cap \{y\leq 1/\gep_0\}$ with the set $A$ defined in \eqref{eq:defA}. As $K$ is compact, Proposition \ref{prop:convergenceP} and Lemma \ref{prop:continuity} assure that
\beq
\bbP(\Psi_K^{\lambda(\beta-\epsilon)} \left(\Pi_N\right) < u +2 \eta) \to \bbP(\Psi^{\lambda(\beta-\epsilon)}_K(\Pi)< u+2\eta)
\eeq
when $n$ goes to infinity (we recall that $\Psi^{\lambda(\beta-\epsilon)}_K(\Pi)$ is continuous).
Using that $\Psi^{\lambda(\beta-\epsilon)} \leq \Psi^{\lambda(\beta-\epsilon)}_K$ we obtain 
\beq
\ba
\bbP(\Psi^{\lambda(\beta-\epsilon)}_K(\Pi)< u+2\eta) &\leq \bbP(\Psi^{\lambda(\beta-\epsilon)}(\Pi)< u+2\eta)\\
										&=\bbP(F^{\beta-\epsilon}< u+2\eta).
\ea
\eeq
Finally, we have proven
\beq
\limsup_{n\to +\8}\bbP (F_n \leq u) \leq \bbP(F^{\beta-\epsilon}< u+2\eta)+{  \theta}.
\eeq
As $u\mapsto \bbP(F^{\beta-\epsilon}\leq u)$ is right-continuous,
\beq
\label{eq:presk}
\lim_{\eta \to 0} \bbP(F^{\beta-\epsilon}< u+2\eta) =  \bbP(F^{\beta-\epsilon}\leq u). 
\eeq
From Lemma \ref{lem:Fgep}, $F^{\beta-\epsilon} {\nearrow}  F^{\beta}$ almost surely when $\gep$ goes to $0$ so that
\beq
 \bbP(F^{\beta-\epsilon}\leq u){\to}  \bbP(F^{\beta}\leq u) \qquad  \epsilon \to 0.
\eeq
Finally,
\beq
\limsup_{n\to +\8}\bbP (F_n \leq u) \leq \bbP(F^{\beta}\leq u)+{  \theta},
\eeq
and, as $\theta$ can be chosen arbitrarily small, we obtain the upper bound
\beq
\limsup_{n\to +\8}\bbP (F_n \leq u) \leq \bbP(F^{\beta}\leq u).
\eeq
 \par We turn now to the \textbf{lower bound} in \eqref{main_eq}:
\beq
\liminf_{n\to\infty} \bbP(F_n \le u) \geq \bbP(F \le u),
\eeq
or, equivalently,
\beq
\limsup_{n\to\infty} \bbP(F_n > u) \leq \bbP(F > u).
\eeq 
The proof works essentially in the same way as for the upper bound. Again fix $\theta,\gep,\eta>0$ and, using Proposition~ \ref{lem:ge}, $\delta>0$ and $\gep_1(\delta)$ so that for $\gep_0<\gep_1(\delta)$
\beq
\label{eq:choice}
\liminf_{n}\bbP(\Omega_n(\delta,\gep_0,\gep,\eta))>1-\theta.
\eeq
We choose $\gep_0<\gep_1(\delta)$ small enough so that 

\begin{enumerate}
\item the conclusion of Proposition \ref{prop:lb} is satisfied;
\item the following inequality holds
\beq
\label{eq:condlb}
\frac{\lambda(\beta)}{\gep_0}> 2\lambda(\beta)+ \frac{\pi^2}{2\delta^2}.
\eeq 
\end{enumerate} 
Using Proposition \ref{prop:lb}, for $n$ large enough,
\beq 
\label{eq:presque}
\ba
\bbP(F_n > u) &\leq \bbP(\min_{1< \ell \le N^{1+\gk}} G^{\beta}_n(\ell)> u-\eta, \Omega_n) + \bbP(\Omega_n^c)\\
		&\leq \bbP(\min_{\ell \in \cR_{\gep_0}(n)} G^{\beta}_n(\ell)> u-\eta, \Omega_n) + \bbP(\Omega_n^c) \\
			&\leq \bbP(\min_{\ell \in \cR_{\gep_0}(n)} \tilde{G}^{\beta}_n(\ell)> u-2\eta, \Omega_n)+\bbP(\max_{\ell \in \cR_{\gep_0}(n) } |\tilde{G}_n^{\beta}(\ell)-G^{\beta}_n(\ell)| \geq \eta,  \Omega_n)+\bbP(\Omega_n^c).
\ea
\eeq
The second term in this last equation is treated exactly in the same way as the second term in \eqref{eq:mainub} and is thus zero for $n$ large enough. The third one is smaller than $\theta$ by \eqref{eq:choice} for $n$ large enough. We thus focus on the first one. The choice of $\gep_0$ in \eqref{eq:condlb} implies that 
\beq
\Omega_n\subset \{\argmin \tilde{G}_n < N/\gep_0\}.
\eeq
Indeed, as $\gO_n\subset A^{(4)}_n(\delta,\gep)$, it holds that, on $\gO_n$,
\beq
\min_{N\leq \ell \leq 2N} \tilde{G}^{\beta}_n < 2\lambda(\beta) + \frac{\pi^2}{2\delta^2}<\frac{\lambda(\beta)}{\gep_0}<\min_{\ell>N/\gep_0}\tilde{G}^{\beta}_n(\ell).
\eeq
Therefore, for any compact set $K$ in $E$,
\beq
\ba
\bbP(\min_{\ell \in \cR_{\gep_0}(n)} \tilde{G}^{\beta}_n(\ell)> u-2\eta, \Omega_n) &=  \bbP(\min_{\ell \in \cR} \tilde{G}^{\beta}_n(\ell)> u-2\eta,  \Omega_n)\\
&=\bbP(\Psi^{\lambda(\beta)}(\Pi_N)> u-2\eta,\Omega_n)\\
&\leq \bbP(\Psi_K^{\lambda(\beta)}(\Pi_N)> u-2\eta).
\ea
\eeq
By Lemma \ref{prop:continuity}, $\bbP(\Psi_K^{\lambda(\beta)}(\Pi_N)> u-2\eta)$ converges to $\bbP(\Psi_K^{\lambda(\beta)}(\Pi)> u-2\eta)$ when $N$ goes to infinity. 
Finally,
\beq
\limsup_{n\to\infty} \bbP(F_n > u) \leq \bbP(\Psi_K^{\lambda(\beta)}(\Pi)> u-2\eta) + \theta.
\eeq
By letting $K$ increase to $E$, we obtain 
\beq
\limsup_{n\to\infty} \bbP(F_n > u) \leq \bbP(\Psi^{\lambda(\beta)}(\Pi)> u-2\eta) + \theta,
\eeq
and we conclude as for the upper bound by letting $\eta$ and $\theta$ go to $0$. 

\appendix

\section{Proof of Proposition \ref{pr:periodic}}
\label{sec:periodic}

The proof is divided into several steps.
In the following, we partition $\tau$ into $p$ disjoint subsets $\tau^{(i)} = \tau_i + \tau_p\bbZ$, for $0\le i < p$. \\

{\bf \noindent Step 1. Decomposition of the probability.}
We first consider the event $\{\gs > n,\ S_n\in \tau\}$ instead of $\{\gs > n\}$ and will come back to the original event at the final step. 
By decomposing according to the visits to $\tau$ and by using the Markov property, we obtain
\beq
\label{eq:roughUBstep2}
\bP(\gs > n,\ S_n\in \tau) =  \sum_{m=1}^n \sum_{0<u_1<\ldots<u_m=n} \sum_{x_1,\ldots, x_m} \prod_{i=1}^m \Big( q_{x_{i-1}, x_i}(u_i - u_{i-1}) e^{-\gb} \Big),
\eeq
where $u_0 = 0$, $x_0=0$,  $x_1,\ldots, x_m \in \bbZ/p\bbZ$, and the $q_{ij}(n)$'s are a slight modification of the ones defined in~\eqref{eq:defqij}, namely
\beq
\label{eq:def_qij}
q_{ij}(n) = \bP_{\tau_i}(S_k \notin \tau,\ 1\le k < n,\ S_n \in \tau^{(j)}),\qquad i,j\in \bbZ/p\bbZ, \quad n\ge 1.
\eeq
It will be helpful later in the proof to know the asymptotic behaviour of $q_{ij}(n)$, as $n\to\infty$:
\beq
\lim \frac{1}{n} \log q_{ij}(n) = - g(t_{ij}), \qquad n\to \infty,
\eeq
where $g$ has been defined in~\eqref{eq:g} and $t_{ij}$ in~\eqref{eq:deftij}.\\

{\bf \noindent Step 2. Definition of $\phi(\gb ; t_1, \ldots, t_p)$.}
Recall the definition of $Q_{ij}(\phi)$ in~\eqref{eq:defQij} (with $q_{ij}$ defined in \eqref{eq:def_qij}), which is now restricted to $i,j\in \bbZ/p\bbZ$.
For all $i,j$, $Q_{ij}(\phi)$ is finite and increasing on $[0, g(t_{ij}))$, infinite on $[g(t_{ij}),\infty)$ and its limit at $g(t_{ij})$ is infinite, by Proposition \ref{pr:small_ball}. Let $\gL(\phi)$ be the Perron-Frobenius eigenvalue of $Q(\phi)$, defined as infinity when one of the entry is infinite, that is for $\phi \ge \min g(t_{ij}) = g(t_{\max})$. 
We recall that
\beq
\gL(\phi) = \sup_{v\neq 0} \min_i \frac{(Q(\phi) v)_i}{v_i}.
\eeq
From what precedes,
$\gL$ is increasing on $[0, g(t_{\max}))$ and tends to $\infty$ as $\phi \nearrow g(t_{\max})$. Therefore, the equation
\beq
\gL(\phi) = \exp(\gb)
\eeq
has a unique positive solution on this interval, that we denote by $\phi(\gb ; t_1, \ldots, t_p)$. As $\phi(\gb ; t_1, \ldots, t_p)\in [0, g(t_{\max}))$, this proves the second inequality in \eqref{eq:comp.phi}.
In the sequel of the proof, for the sake of conciseness, we use the notation  $\phi(\gb)=\phi(\gb ; t_1, \ldots, t_p)$. 
Coming back to \eqref{eq:roughUBstep2}, we get
\beq
\label{eq:roughUBstep2b}
\text{ r.h.s\eqref{eq:roughUBstep2}} = e^{-\phi(\gb)n}
\sum_{m=1}^n \sum_{0<u_1<\ldots<u_m=n} \sum_{x_1, \ldots, x_m} \prod_{i=1}^m \Big( q_{x_{i-1}, x_i}(u_i - u_{i-1}) e^{-\gb+\phi(\gb)(u_i - u_{i-1})} \Big).
\eeq
{\bf \noindent Step 3. Spectral decomposition and a first upper bound.}
The key idea in this step is a spectral decomposition, which is a technique used also in the context of the parabolic Anderson model, see \cite[Section $2.2.1$]{Ko16}.
Let us define a matrix $Q^\gb$ by
\beq 
Q^{\gb}_{ij} = Q_{ij}(\phi(\gb))e^{-\gb},\qquad i,j\in \bbZ/p\bbZ,
\eeq
which is symmetric (by symmetry of the simple random walk) with positive entries. From what precedes, its top eigenvalue is one. Therefore, we may denote by $1= \gl_0 \ge \gl_1 \ge \ldots  \ge \gl_{p-1}$ its eigenvalues by decreasing order (with possible repetitions), with $|\gl_k|<1$ for $k>1$ (Theorem 1.1 in Seneta~\cite{S06}) and $(\nu_i)_{0\le i < p}$ an associated basis of orthonormal left eigenvectors. Note that for all $0\le x < p$, we have $\gd_x = \sum_{i=0}^{p-1} \nu_i(x) \nu_i$, that is the element of $\bbR^p$ which is $1$ at coordinate $x$ and $0$ elsewhere. Let us now define, for $0\le a < p$,
\beq
\cZ_n(a)=\sum_{m=1}^n  \sum_{x_0 = a, x_1, \ldots, x_m} \prod_{i=1}^m Q^{\gb}_{x_{i-1}, x_i}.
\eeq
By removing the condition $\{u_m = n\}$ in \eqref{eq:roughUBstep2b}, we get the upper bound
\beq
\text{r.h.s.\eqref{eq:roughUBstep2b}} \le e^{-\phi(\gb)n} \cZ_n(0).
\eeq
Moreover,
\beq
\cZ_n(0) = \langle \gd_0,\ \cZ_n(\cdot) \rangle =  \sum_{i=0}^{p-1} \sum_{j=0}^{p-1} \nu_i(0) \nu_i(j) \cZ_n(j),
\eeq
(with the usual scalar product) which yields
\beq
\cZ_n(0) = \sum_{i=0}^{p-1} \sum_{j=0}^{p-1} \sum_{m=1}^n \nu_i(0) \nu_i(j) \sum_{x_0 = j, x_1 \ldots, x_m} \prod_{k=1}^m Q^{\gb}_{x_{k-1}, x_k}.
\eeq
By definition of the $\nu_i$'s we get for $0\le i < p$,
\beq
\sum_{j=0}^{p-1} \nu_i(j) \sum_{x_0 = j, x_1, \ldots, x_m} \prod_{k=1}^m Q^{\gb}_{x_{k-1}, x_k} 
= \nu_i (Q^\gb)^m \ind
= \gl_i^m \sum_{j=0}^{p-1} \nu_i(j),
\eeq
where $\ind$ is the vector with all one.
Therefore,
\beq
\cZ_n(0) = \sum_{i=0}^{p-1} \sum_{j=0}^{p-1} \sum_{m=1}^n \nu_i(0) \nu_i(j) \gl_i^m
 \le n \sum_{i=0}^{p-1} \sum_{j=0}^{p-1} |\nu_i(0)| |\nu_i(j)|
 \le np,
\eeq
where in the first inequality we use that $|\gl_i|\le 1$ and the triangular inequality, while in the second inequality we use the Cauchy-Schwarz inequality and the fact that
\beq
\sum_{j=0}^{p-1} \nu_i(j)^2 = \|\nu_i\|_2^2 = 1,\quad \sum_{i=0}^{p-1} \nu_i(0)^2 = \|\gd_0\|_2^2 = 1.
\eeq
Finally, we have obtained
\beq
\label{eq:A15}
\bP(\gs > n,\ S_n\in \tau) \le np e^{-\phi(\gb) n}.
\eeq

{\bf \noindent Step 4. Lower bound in \eqref{eq:315}.} The components of a Perron-Frobenius eigenvector being all positive (or all negative), we may consider the matrix $\{Q^{\gb}_{ij}\frac{\nu_0(j)}{\nu_0(i)} \}_{ij}$, which turns out to be stochastic. This actually defines a Markov renewal process $\rho$  on  $\bbZ/p\bbZ$ (see e.g. Section VII.4 in Asmussen~\cite{A03}) with law $\cP_\gb$ determined by the kernel
\beq
q^\gb_{ij}(n) = \exp(\phi(\gb) n - \gb) q_{ij}(n) \frac{\nu_0(j)}{\nu_0(i)}, \quad n\geq 1, \quad i,j\in \bbZ/p\bbZ,
\eeq
and starting from state $0$.
Therefore, we may write
\beq
\ba
&\bP(\gs > n,\ S_n\in \tau) \ge \bP(\gs > n,\ S_n\in \tau^{(0)}) \\
&= e^{-\phi(\gb)n}
\sum_{m=1}^n \sum_{0<u_1<\ldots<u_m=n} \sumtwo{x_1, \ldots, x_{m-1}}{x_0 = x_m = 0} \prod_{i=1}^m \Big( q_{x_{i-1}, x_i}(u_i - u_{i-1}) e^{-\gb+\phi(\gb)(u_i - u_{i-1})} \Big)\\
&= e^{-\phi(\gb)n}
\sum_{m=1}^n \sum_{0<u_1<\ldots<u_m=n} \sumtwo{x_1, \ldots, x_{m-1}}{x_0 = x_m = 0} \prod_{i=1}^m q^\gb_{x_{i-1}, x_i}(u_i - u_{i-1})\\
&= e^{-\phi(\gb)n} \cP_\gb(n\in \rho_0),
\ea 
\eeq
where $\rho_0$ is the subset of $\rho$ formed by the Markov renewal points with state $0$. It turns out that it is a renewal process. By the second inequality in \eqref{eq:comp.phi} (that we have already proven in Step $2$) $q^\gb_{ij}$ decays exponentially in $n$ for all $i,j\in \bbZ/p\bbZ$. This implies (as the modulating Markov chain has finite state space) that the inter-arrival law of $\rho_0$ also decays exponentially in $n$, which implies integrability. Therefore, by the renewal theorem, $\cP_\gb(n\in \rho_0)$ converges to some constant (that is the inverse of the mean inter-arrival time). This concludes this step.\\

{\bf \noindent Step 5. Proof of \eqref{eq:comp.phi}.}
The second inequality has already been established in Step 2, so let us prove the first inequality. A standard coupling argument yields
\beq
\sum_{j} Q_{ij}(\phi) = \bE_{\tau_i}(e^{\phi\theta_1}) \le \bE({e^{\phi\theta_1^{\max}}}), \qquad i\in \bbZ/p\bbZ,
\eeq
where $\theta_1 = \inf\{n\ge 1 \colon S_n \in \tau\}$ and $\theta_1^{\max} = \inf\{n\ge 1 \colon S_n \in t_{\max}\bbZ\}$.
By Proposition \ref{pr:homo}, we get that $\sum_{j} Q_{ij}(\phi(\gb,t_{\max})) = e^{\gb}$. Thanks to Lemma \ref{lem:PFone} below, it means that $\gL(\phi(\gb,t_{\max})) \le e^{\gb}$ and we get the desired bound, as $\Lambda$ is non-decreasing.\\

{\bf \noindent Step 6. Final upper bound.}
We now conclude by removing the condition $\{S_n \in \tau\}$ in the upper bound.
To this end, we decompose according to the last visit to $\tau$ before $n$:
\beq
\bP(\gs > n,S_n \notin \tau) = \sum_{m=0}^{n-1} \sum_{j=0}^{p-1} \bP(\gs > m,\ S_m\in \tau^{(j)}) \bP_{\tau_j}(S_k \notin \tau,\ k\le n-m).
\eeq
By using Proposition \ref{pr:small_ball}, we get that there exists $C$ such that for all $0\le j <p$ and $n\ge 1$,
\beq
\bP_{\tau_j}(S_k \notin \tau,\ k\le n) \le C e^{- \min g(t_{ij}) n} = C e^{- g(t_{\max})n}.
\eeq
By using \eqref{eq:comp.phi}, we get
\beq
\ba
\bP(\gs > n ,S_n \notin \tau ) &\le C \sum_{m=0}^n \sum_{j=0}^{p-1} \bP(\gs > m,\ S_m\in \tau^{(j)}) e^{-\phi(\gb) (n-m)}\\
&= C \sum_{m=0}^n \bP(\gs > m,\ S_m\in \tau) e^{-\phi(\gb) (n-m)}\\
&\le Cn^2p \exp(-\phi(\gb) n),
\ea
\eeq
where we have used \eqref{eq:A15} to go from the second to the last line.

\begin{lemma}\label{lem:PFone}
If the sums over lines of a non-negative matrix $A$ are less than one, then its Perron-Frobenius eigenvalue is less than one.
\end{lemma}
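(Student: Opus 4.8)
The plan is to exploit the variational/eigenvector characterisation of the Perron--Frobenius eigenvalue and reduce the claim to the trivial observation that the spectral radius of a non-negative matrix is bounded by its largest row sum. Recall that for a general non-negative square matrix $A$ (no irreducibility needed), the Perron--Frobenius theorem guarantees that the spectral radius $\rho(A)$ is itself an eigenvalue and admits an associated eigenvector $v\ge 0$ with $v\neq 0$, i.e.\ $Av=\rho(A)v$.

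First I would fix such a $v$ and pick an index $i$ realising $v_i=\max_j v_j$; since $v\ge 0$ and $v\neq 0$ we have $v_i>0$. Reading off the $i$-th coordinate of the identity $Av=\rho(A)v$ gives
\beq
\rho(A)\,v_i=(Av)_i=\sum_j A_{ij}v_j\le\Big(\sum_j A_{ij}\Big)v_i<v_i,
\eeq
where the first inequality uses $A_{ij}\ge 0$ together with $v_j\le v_i$ for all $j$, and the second uses the hypothesis that the $i$-th row sum is strictly less than one. Dividing by $v_i>0$ yields $\rho(A)<1$, which is the assertion.

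I do not expect any real obstacle here: the statement is essentially a one-line consequence of Perron--Frobenius. The only points deserving a word of care are (i) that the relevant eigenvector may be taken non-negative even in the (possibly reducible) setting in which the lemma is applied, and (ii) that its maximal component is strictly positive, so that the final division is legitimate. Both are standard. (Equivalently, one may simply note $\rho(A)\le\|A\|_\infty=\max_i\sum_j|A_{ij}|<1$, the $\ell^\infty\!\to\!\ell^\infty$ operator norm dominating the spectral radius; the eigenvector argument above is just the proof of that inequality in the non-negative case.)
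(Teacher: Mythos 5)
Your argument is correct and is essentially the same as the paper's: pick a (non-negative) Perron--Frobenius eigenvector, read off the coordinate where it is maximal, and bound the eigenvalue by the corresponding row sum. The paper writes the chain of inequalities with $\le$ rather than $<$ (as befits its later application, where the row sums equal one exactly, so ``less than'' in the lemma statement should be read as ``at most''), and it does not spell out, as you do, that Perron--Frobenius supplies a non-negative eigenvector with a strictly positive maximal coordinate even without irreducibility; your version is the more careful one. Your closing remark that this is just $\rho(A)\le\|A\|_\infty$ is also apt and dispenses with Perron--Frobenius altogether.
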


\begin{proof}[Proof of Lemma~\ref{lem:PFone}]
Let $\lambda$ be an eigenvalue of $A$ and $v$ an associated eigenvector such that $v_{i_*}=\max_i v_i >0$. Then 
\beq
\lambda v_{i_*}=(Av)_{i_*}\leq \sum_j A_{i_*,j}v_{i_*} \leq v_{i_*},
\eeq
and that is enough to conclude as  $v_{i_*}>0$.
\end{proof}

\bibliographystyle{plain}
\bibliography{biblio}


\end{document}